\pgfplotsset{compat=newest}
\theoremstyle{plain}
\newtheorem{theorem}{Theorem}[section]
\newtheorem{lemma}[theorem]{Lemma}
\newtheorem{corollary}[theorem]{Corollary}
\newtheorem{proposition}[theorem]{Proposition}
\theoremstyle{definition}
\newtheorem{remark}[theorem]{Remark}
\theoremstyle{plain}
\definecolor{MyDarkGreen}{rgb}{0,0.45,0}
\def\trait #1 #2 #3 {\vrule width #1pt height #2pt depth #3pt}
\def\fin{\hfill
        \trait .3 5 0
        \trait 5 .3 0
        \kern-5pt
        \trait 5 5 -4.7
        \trait 0.3 5 0
\medskip}
\newcommand{\ENDPROOF}{\fin}
\newcommand{\dims}{\,\textit{dim}}
\newcommand{\RANK}{\,\textit{rank}}
\newcommand{\REAL}{\mathbbm{R}}
\newcommand{\nv}{\mathbf{n}}
\newcommand{\uv}{\mathbf{u}}
\newcommand{\vv}{\mathbf{v}}
\newcommand{\wv}{\mathbf{w}}
\newcommand{\xv}{\mathbf{x}}
\newcommand{\yv}{\mathbf{y}}
\newcommand{\zerov}{\mathbf{0}}
\newcommand{\as}{a}
\newcommand{\cs}{c}
\newcommand{\es}{e}
\newcommand{\fs}{f}
\newcommand{\gs}{g}
\newcommand{\ms}{m}
\newcommand{\ns}{n}
\newcommand{\ps}{p}
\newcommand{\qs}{q}
\newcommand{\rs}{r}
\renewcommand{\ss}{s}
\newcommand{\us}{u}
\newcommand{\vs}{v}
\newcommand{\ws}{w}
\newcommand{\xs}{x}
\newcommand{\ys}{y}
\newcommand{\Cs}{C}
\newcommand{\Fs}{F}
\newcommand{\Ms}{M}
\newcommand{\Ns}{N}
\newcommand{\Qs}{Q}
\newcommand{\Vs}{V}
\newcommand{\Ws}{W}
\newcommand{\qst}{\widetilde{\qs}}
\newcommand{\matD}{\mathsf{D}}
\newcommand{\matG}{\mathsf{G}}
\newcommand{\matH}{\mathsf{H}}
\newcommand{\matI}{\mathsf{I}}
\newcommand{\matM}{\mathsf{M}}
\newcommand{\matP}{\mathsf{P}}
\newcommand{\matR}{\mathsf{R}}
\newcommand{\matS}{\mathsf{S}}
\newcommand{\invS}{\mathsf{S}^\dagger}
\newcommand{\invinvS}{\matS^{\dagger\dagger}}
\newcommand{\invinvtSigma}{\widetilde{\mathsf{\Sigma}}^{\dagger\dagger}}
\newcommand{\invtSigma}{\widetilde{\mathsf{\Sigma}}^{\dagger}}
\newcommand{\tSigma}{\widetilde{\mathsf{\Sigma}}}
\newcommand{\matrSigma}{\mathsf{\Sigma}}
\newcommand{\matPs}  {\matP^*}
\newcommand{\calH}{\mathcal{H}}
\newcommand{\calM}{\mathcal{M}}
\newcommand{\calN}{\mathcal{N}}
\newcommand{\calO}{\mathcal{O}}
\newcommand{\deltav}  {\bm\delta}     
\newcommand{\zetav}   {\bm\zeta}      
\newcommand{\etav}    {\bm\eta}       
\newcommand{\kappav}  {\bm\kappa}      
\newcommand{\lambdav} {\bm\lambda}    
\newcommand{\PS}[1]{\mathbbm{P}_{#1}}
\newcommand{\HONE}   {H^1}
\newcommand{\HONEzr} {H^1_0}
\newcommand{\HONEzrbr}{\HONE_{\small{\oslash}}(\P)}
\newcommand{\Htracezr}{\HS{\frac12}_{\small{\oslash}}(\bP)}
\newcommand{\HONEnck}{H^{1,nc}_{k}}
\newcommand{\LTWO}  {L^2}
\newcommand{\LINF}  {L^{\infty}}
\newcommand{\HS}[1] {H^{#1}}
\renewcommand{\P} {P}
\newcommand  {\E} {e}
\newcommand{\Ep}  {\E^{\prime}}
\newcommand{\Ei}  {\E_{i}}
\newcommand{\Ej}  {\E_{j}}
\newcommand{\hh}{h}
\newcommand{\Th}{\Omega_{\hh}}
\newcommand{\hP}{\hh_{\P}}
\newcommand{\hE}{\hh_{\E}}
\newcommand{\hhP}{\widehat{\hh}_{\P}}
\newcommand{\hEp}{\hh_{\Ep}}
\newcommand{\mP}{\ABS{\P}}
\newcommand{\Eset}{\mathcal{E}}    
\newcommand{\NMB}{N}
\newcommand{\NSs}{\NMB^*}      
\newcommand{\dV}{}
\newcommand{\dS}{}
\newcommand{\nor}  {\mathbf{n}}
\newcommand{\norE} {\nor_{\E}}
\newcommand{\norP} {\mathbf{n}_{\P}}
\newcommand{\fsh}{\fs_{\hh}}
\newcommand{\ush}{\us}
\newcommand{\vsh}{\vs}
\newcommand{\vshh}{\widehat{\vs}}
\newcommand{\vsE}{\vs_{\E}}
\newcommand{\wsh}{\ws_{\hh}}
\newcommand{\asP}{\as^{\P}}
\newcommand{\ash}{\as_{\hh}}
\newcommand{\ashP}{\as^P_{\hh}}
\newcommand{\scalV} [2]{\left(#1,#2\right)_{}}
\newcommand{\scalVp}[2]{\left(#1,#2\right)_{*}}
\newcommand{\snorm}  [2]{|#1|_{#2}}
\newcommand{\SNORM}  [2]{\left|#1\right|_{#2}}
\newcommand{\tb}{\interleave}
\newcommand{\norm}   [2]{\|#1\|_{#2}}
\newcommand{\NORM}   [2]{\left\|#1\right\|_{#2}}
\newcommand{\tNORM}   [2]{ \tb #1 \tb_{#2}}
\newcommand{\snormV} [1]{|#1|_{}}
\newcommand{\snormVp}[1]{|#1|_{*}}
\newcommand{\normV}  [1]{\|#1 \|_{}}
\newcommand{\normVp} [1]{\| #1 \|_{*}}
\newcommand{\ABS}    [1]{\left|#1\right|}
\newcommand{\jump}[1]{\lbrack\!\lbrack\,#1\,\rbrack\!\rbrack}
\newcommand{\Vhk}{V^{\hh}_{k}}
\newcommand{\VhkP}{V^{\hh}_k(\P)}
\newcommand{\Piz}[1]{\Pi^{0}_{#1}}
\newcommand{\PinP}[1]{\Pi^{\nabla,\P}_{#1}}
\newcommand{\bscal}[1]{\langle #1 \rangle}
\newcommand{\restrict}[2]{{#1}_{|{#2}}}
\newcommand{\EOD}{\end{document}}
\newcommand{\bil}[2]{\langle#1,#2\rangle}
\newcommand{\roundPrecision}{2}
\newcommand{\bP} {\partial\P}            
\newcommand{\mbP}{\ABS{\partial\P}}      
\newcommand{\Nk}[1]{N_{#1}}
\newcommand{\Vhken}{\Vs_{k}^{\hh,\text{en}}}
\newcommand{\VhkenP}{\Vs_{k}^{\hh,\text{en}}(\P)}
\newcommand{\basis}{\mathfrak{B}}
\newcommand{\hbasis}{\widehat{\basis}}
\newcommand{\te}{\eta}
\newcommand{\ebase}{e}
\newcommand{\hebase}{\widehat{\ebase}}
\newcommand{\hte}{\widehat{\te}}
\newcommand{\tW}{W^*}
\newcommand{\matr}[1]{\mathbf{#1}}
\newcommand{\tsP}{\sigma^*_\P}
\newcommand{\basedofs}{\mathfrak B}
\newcommand{\Nbdofs}{k N}
\newcommand{\G}{\mathcal{G}}
\newcommand{\hG}{\mathcal{G}^*}
\newcommand{\gridauxP} {\mathcal{G}_\text{aux}(\bP)}
\newcommand{\hgridauxP}{\mathcal{G}^*_\text{aux}(\bP)}
\newcommand{\Const}{K(\G)}
\newcommand{\Lin}{\widetilde K(\hG)}
\DeclareMathOperator{\spa}{span}
\newcommand{\Qtilde}{\widetilde Q}
\newcommand{\Gl}{\widetilde{\matG}_\ell^*}
\newcommand{\SRone}  {\textbf{(G1)}}
\newcommand{\SRtwo}  {\textbf{(G2)}}
\newcommand{\SRtwop} {\textbf{(G2a)}}
\newcommand{\SRtwopp}{\textbf{(G2b)}}
\newcommand{\SRthree}{\textbf{(G3)}}
\newcommand{\SRthreea}{\textbf{(G3.1)}}
\newcommand{\SRthreeb}{\textbf{(G3.2)}}
\newcommand{\gb}{\gamma_2}
\newcommand{\Edges}   {\mathcal{E}}          
\newcommand{\EdgesP}  {\mathcal{E}_\P}
\newcommand{\EdgesPp} {\mathcal{E}_\P^{1}}
\newcommand{\EdgesPpp}{\mathcal{E}_\P^{2}}
\newcommand{\EdgesG}{\mathcal{E}_\Gamma}
\newcommand{\tEe}{\omega_e}
\newcommand{\constSteinbach}{c_0}
\newcommand{\tNk}[1]{\widetilde N_{#1}}
\newcommand{\sP} {\sigma^{\P}}
\newcommand{\sPlin}{\widetilde{\sigma}^{\P}}
\newcommand{\Baselin}{\widetilde{\mathfrak{B}}}
\newcommand{\tphi}{\widetilde{\phi}}
\newcommand{\ringVhkP}{\mathring{V}^h_k(\P)}
\newcommand{\portedge}{\pi^0_{\bP}}
\DeclareMathOperator{\KER}{ker}
\newcommand{\Vsp}{\Vs^{\prime}} 
\newcommand{\Wss}{\Ws^*}
\newcommand{\tVhk}{V_k^*}
\newcommand{\gK}    {\gamma_\P}       
\newcommand{\gKstar}{\gamma_\P^*}
\newcommand{\spcdot}{\hspace{0.2mm}\cdot\hspace{0.2mm}}
\newcommand{\TERM}[1]{\textbf{(#1)}}
\newcommand{\constinfsup}{\beta}
\newcommand{\Wbar}{\widehat\Ws}
\newcommand{\Pbar}{\widehat\Pi}
\newcommand{\Wbars}{\widehat\Ws^*}
\newcommand{\Pbars}{\widehat\Pi^*}
\newcommand{\lbdv}{\lambdav}
\newcommand{\sss}{\ss^*}
\newcommand{\monomials}[2]{\calM_{#1}(#2)}
\newcommand{\mono}{m_\alpha}
\newcommand{\MeshThree}{$\calM_3$}
\newcommand{\MeshOne}  {$\calM_1$}
\newcommand{\MeshOneA} {$\calM_{1A}$}
\newcommand{\MeshOneB} {$\calM_{1B}$}
\newcommand{\MeshTwo}  {$\calM_2$}
\newcommand{\MeshTwoB} {$\calM_{2B}$}
\newcommand{\MeshTwoA} {$\calM_{2A}$}
\newcommand{\Error}[1]{\es_{#1}^{\us}}
\newcommand{\Stab}[1]{$\sigma_{#1}$}
\newcommand{\Nel}{N_\textup{el}}
\newcommand{\Ned}{N_\textup{ed}}
\newcommand{\gamh}{\gamma_{\hh}}
\newcommand{\CsM}{A}
\newcommand{\CPihat}{C_{\Pbar}}
\newcommand{\Cpoinc}{C_{\text{poi}}}
\newcommand{\tNaux}{N^*_\text{aux}}
\newcommand{\Naux}{N_\text{aux}}
\newcommand{\lowpass}{\mathsf{h}}
\newcommand{\bandpass}{\mathsf{g}}
\newcommand{\Pj}{Q_j}
\newcommand{\lb}{b}
\newcommand{\Ext}{E}
\newcommand{\Dlow}{D_1}
\newcommand{\Dhigh}{D_2}
\newcommand{\solh}{\us_\hh}
\newcommand{\ssltwo}{s^0_{L2}}
\newcommand{\sswav}{s^0_\mathrm{wav}}
\newcommand{\sslb}{s^0_{\mathrm{sLB}}}
\newcommand{\ssrlb}{s^0_{\mathrm{rLB}}}
\newcommand{\tmatM}{\widetilde{\matM}}
\newcommand{\tmatR}{\widetilde{\matR}}
\begin{document}

\title{Stabilization of the nonconforming Virtual Element Method}

\author[IMATI]{S.~Bertoluzza$^*$}
\author[IMATI]{G.~Manzini}
\author[IMATI]{M.~Pennacchio}
\author[IMATI]{D.~Prada}


\address[IMATI]{
  Istituto di Matematica Applicata e Tecnologie Informatiche, "E. Magenes", CNR, 
  via Ferrata 5A, 27100 Pavia, Italy
}

\thanks{$^*$ Corresponding author}

\keywords{Virtual element method,
  nonconforming Galerkin method,
  polygonal mesh,
  stabilization,
  dual norms
}

\begin{abstract}
  We address the issue of designing robust stabilization terms for the
  nonconforming virtual element method.
  To this end, we transfer the problem of defining the stabilizing
  bilinear form from the elemental nonconforming virtual element
  space, whose functions are not known in closed form, to the dual
  space spanned by the known functionals providing the degrees of
  freedom.
  By this approach, we manage to construct different bilinear forms
  yielding optimal or quasi-optimal stability bounds and error
  estimates, under weaker assumptions on the tessellation than the
  ones usually considered in this framework.
  In particular, we prove optimality under geometrical assumptions
  allowing a mesh to have a very large number of arbitrarily small
  edges per element.
  Finally, we numerically assess the performance of the VEM for
  several different stabilizations fitting with our new framework on a
  set of representative test cases.
\end{abstract}


\maketitle


  

\section{Introduction}
Solving partial differential equations on polygonal and polyhedral
meshes has become a major issue in the last decades,
and a number of numerical methods have been proposed to this end in
the technical literature.
Many of these methods are based on some kind of generalization of the
finite element method (FEM) and must address the critical issue that the
construction of shape functions on elements with arbitrary geometric
shapes is a very difficult task.
The virtual element method (VEM), originally proposed
in~\cite{BeiraodaVeiga-Brezzi-Cangiani-Manzini-Marini-Russo:2013} for
the Poisson equation and then extended to
convection-reaction-diffusion problems with variable coefficients
in~\cite{BeiraodaVeiga-Brezzi-Marini-Russo:2016b}, brilliantly overcomes
this issue.
The method was designed from the very beginning to work on generally
shaped elements with high order of accuracy, and does not require an
explicit knowledge of the basis functions that generate the finite
element approximation space.
Indeed, the formulation of the method and its practical
implementations are based on suitable polynomial projections that are
always computable from a careful choice of the degrees of freedom.
Optimal numerical approximations of arbitrary order and arbitrary
regularity to PDE solutions are possible in two and three dimensions
using very general mesh families, including meshes that are often
considered as pathological in other methods.
VEM is intimately connected with other finite
element approaches:
the connection between the VEM and finite elements on
polygonal/polyhedral meshes is thoroughly investigated
in~\cite{Manzini-Russo-Sukumar:2014,
  Cangiani-Manzini-Russo-Sukumar:2015, DiPietro-Droniou-Manzini:2018},
between VEM and discontinuous skeletal gradient discretizations
in~\cite{DiPietro-Droniou-Manzini:2018}, and between the VEM and the
BEM-based FEM method
in~\cite{Cangiani-Gyrya-Manzini-Sutton:2017:GBC:chbook}.

The conforming VEM was originally developed as a variational
reformulation of the \emph{nodal} mimetic finite difference (MFD)
method~\cite{%
Brezzi-Buffa-Lipnikov:2009,%
BeiraodaVeiga-Lipnikov-Manzini:2011,%
Manzini-Lipnikov-Moulton-Shashkov:2017%
}
for solving diffusion problems on unstructured polygonal meshes. The issue of its efficient implementation is considered in several papers (cf. \cite{Bertoluzza-Pennacchio-Prada:2020,
	Bertoluzza-Pennacchio-Prada:2017,
	Antonietti-Mascotto-Verani:2018,
	Dassi-Scacchi:2019,
	Dassi-Scacchi:2020,
	Calvo:2019}.
A survey on the MFD method can be found in the review
paper~\cite{Lipnikov-Manzini-Shashkov:2014} and the research
monograph~\cite{BeiraodaVeiga-Lipnikov-Manzini:2014}.
The scheme inherits the flexibility of the MFD method with respect to
the admissible meshes and this feature is well reflected in the many
significant applications that have been developed so far, see, for
example,~\cite{%
BeiraodaVeiga-Manzini:2014,%
BeiraodaVeiga-Manzini:2015, Berrone-Pieraccini-Scialo-Vicini:2015,%
Mora-Rivera-Rodriguez:2015,%
Paulino-Gain:2015,%
Antonietti-BeiraodaVeiga-Scacchi-Verani:2016,%
BeiraodaVeiga-Chernov-Mascotto-Russo:2016,%
BeiraodaVeiga-Brezzi-Marini-Russo:2016b,%
Cangiani-Georgoulis-Pryer-Sutton:2016,%
Perugia-Pietra-Russo:2016,%
Wriggers-Rust-Reddy:2016,
Certik-Gardini-Manzini-Vacca:2018:ApplMath:journal,
Dassi-Mascotto:2018,%
Benvenuti-Chiozzi-Manzini-Sukumar:2019:CMAME:journal,%
Antonietti-Manzini-Verani:2019:CAMWA:journal,
Antonietti-Bertoluzza-Prada-Verani:2020,
Certik-Gardini-Manzini-Mascotto-Vacca:2020%
}.

The nonconforming virtual element method was originally proposed
in~\cite{AyusodeDios-Lipnikov-Manzini:2016} for the solution of the
Poisson equation.
Then, it was extended to
general elliptic
equations~\cite{Cangiani-Manzini-Sutton:2017,Berrone-Borio-Manzini:2018},
fractional reaction-subdiffusion
equations~\cite{Li-Zhao-Huang-Chen:2019},
eigenvalue problems~\cite{Gardini-Manzini-Vacca:2019},
Helmholtz
equations~\cite{Mascotto:Perugia-Pichler:2019-M3AS,Mascotto-Perugia-Pichler:2019-CMAME,Mascotto-Pichler:2019},
Stokes, Darcy-Stokes and Navier-Stokes
equations~\cite{Cangiani-Gyrya-Manzini:2016,Zhao-Zhang-Mao-Chen:2019,Zhao-Zhang-Mao-Chen:2020},
elasticity problems~\cite{Zhang-Zhao-Yang-Chen:2019},
nonconforming anisotropic estimates~\cite{Cao-Long:2019},
plate bending problems, biharmonic equation, highly-order elliptic
equations~\cite{Zhao-Chen-Zhang:2016,Antonietti-Manzini-Verani:2018,Zhang-Zhao-Chen:2020,Huang-Yu:2021}.

\medskip
The nonconforming virtual element method possesses several interesting
features.
First, the VEM admits meshes whose elements are polygons (2D) and
polyhedra (3D) with, in principle, almost arbitrary geometric shapes.
This flexibility in the mesh choice may have a significant
impact in both numerical approximation and mesh generation.
Second, we can construct stable virtual element methods in a
straightforward way for any polynomial degree.
Moreover, such construction can be readily generalized from two to
three space dimensions, and, in principle, to any space dimensions.
Third, the formulation and implementation of the nonconforming VEM
needs less degrees of freedom than other methods, such as, for example
discontinuous Galerkin.
Note also that unknowns associated with the interior of the mesh
elements can be eliminated by static condensation.
This feature makes the VEM competitive in terms of computational
efficiency with respect to other discretization methods.

\medskip
As it happens in the conforming VEM, the stability and convergence of the
nonconforming VEM rely on the fundamental properties of
\emph{consistency} and \emph{stability}.
Consistency is an exactness property that states that the approximated
bilinear forms of the discrete variational formulation are exact on
the subspace of polynomials locally defined in each element.
In turn, stability follows from a suitable \emph{stabilization term},
whose role is to control the non polynomial component of the
discretization. When the polygonal elements satisfy  a  quite restrictive shape regularity condition, basically equivalent to requiring that they  can be decomposed into a (small) number of shape regular triangles, we know that the euclidean
product of the degrees of freedom of the virtual element functions is
an effective stabilization term and provides optimal results.
However, greater care must be taken in designing the stabilization
term when we consider more general elements, such as, for instance,
elements with very small edges.

In the conforming case, the design of computable stabilization terms
yielding optimal results relies on the fact that we can compute the
trace of the virtual element functions on the elemental boundaries
from the degrees of freedom.
Conversely, in the nonconforming case, the knowledge of the degrees of
freedom does not allow us to retrieve the trace of the corresponding
functions without solving a partial differential equation in the
element.
In the VEM terminology, we say that the trace on the elemental
boundary of a nonconforming virtual function is
``\emph{noncomputable}''.
On the positive side, the functionals yielding the degrees of freedom
in a polygonal element $\P$ span a known subspace $\tVhk(\P)$ of the
dual space $(H^1(\P))'$. Such a space satisfies a uniformly stable duality relation with the local VEM
space $\VhkP$. This property, which is inherent to the nonconforming nature of the
approximation space and does not hold for the conforming VEM, allows us to reduce the problem of designing the stabilization bilinear form on the non conforming VEM space $\VhkP$, to the design of a semi-inner product in $\tVhk(\P)$, yielding a suitable seminorm for $(H^1(\P))'$. 
We can then consider and analyze different
strategies for the construction of such semi-inner product, yielding
optimal or quasi-optimal stability and convergence results under
weaker assumptions on the polygonal tessellation.

\medskip
We conclude this introductory section with a review of some basic
definitions about the functional setting and the notation that we use
in the paper.
The rest of the paper is organized as follows.
In Section~\ref{sec:ncvem} we introduce the model problem and
its discretization by the nonconforming virtual element approximation.
In Section~\ref{sec:dual:scalar:product} we present an
abstract theoretical framework for the algebraic construction of the semi-inner
products in finite dimensional dual spaces.
In Section~\ref{sec:stab:theory} and~\ref{sec:lowstab} we discuss the
construction of the stabilization terms for the nonconforming VEM in
such a framework.
In Section~\ref{sec:numerical} we investigate the performance of the
method on a set of suitable numerical experiments.
In Section~\ref{sec:conclusions} we offer our final remarks and
conclusions.

\subsection{Basic definitions, notation and functional setting}

Let the computational domain $\Omega$ be an open, bounded, connected
subset of $\REAL^{2}$ with polygonal boundary $\Gamma$.
We consider a family of domain partitionings
$\mathcal{T}=\{\Th\}_{\hh\in\calH}$.
Every partition $\Th$, the \emph{mesh}, is a finite collection of non overlapping
polygonal elements $\P$, which are such that
$\overline{\Omega}=\cup_{\P\in\Th}\overline{\P}$.
Further assumptions on the mesh family $\mathcal{T}$ and the meshes
$\Th$ will be detailed in Section~\ref{sec:stab:theory}.
	
\medskip
For $\P \in \Th$, we denote the boundary of $\P$ by $\partial\P$, its diameter by $\hP=\max_{\xv,\yv\in\P}\ABS{\xv-\yv}$, its area by $\mP$, and the
outward unit normal to the boundary by $\nor_\P$.
Each elemental boundary $\partial\P$ is formed by a sequence of
one-dimensional non-intersecting straight edges $\E$ with
lenght $\hE$.
The symbols $\EdgesP$, $\EdgesG$ and $\Edges$ respectively denote the set of
edges that form the boundary of the element $\P$, the set of mesh edges on the boundary $\Gamma$, and the set of all the
mesh edges. 

\medskip
We use standard definitions and notations for Sobolev spaces, and for the corresponding norms and seminorms,  cf.~\cite{Adams-Fournier:2003}. More precisely, let $\omega$ be a $d$-dimensional domain, $d=1,2$.
We let $\LTWO(\omega)$ denote the Hilbert functional space of the
real-valued, square integrable functions defined on $\omega$, and
$\HS{m}(\omega)$ the Sobolev functional space of the real-valued
functions in $\LTWO(\omega)$ whose weak derivatives up to the order
$m$ are also in $\LTWO(\omega)$. We let $\norm{\cdot}{0,\omega}$ denote the standard norm in $\LTWO(\omega)$, and $\norm{\spcdot}{m,\omega}$ and $\snorm{\spcdot}{m,\omega}$ denote
respectively the standard norm and seminorm in $\HS{m}(\omega)$.
On the elemental boundary $\bP$, we also consider the functional space
\begin{align}
\HS{\frac12}(\bP) =
\Big\{
\vs\in\LTWO(\bP)
\,\,\textrm{such~that}\,\,
\NORM{\vs}{0,\bP} + \SNORM{\vs}{1/2,\bP}<\infty
\Big\},
\label{eq:HS-half:def}
\end{align}
and its dual $\HS{-\frac12}(\bP)$.
In~\eqref{eq:HS-half:def},
$\SNORM{\,\cdot\,}{1/2,\bP}$ is the
seminorm defined by
\begin{align}
\SNORM{ \vs }{1/2,\bP}^2 =
\int_{\bP\times\bP}\frac{\ABS{\vs(x)-\vs(y)}^2}{\ABS{x-y}^2}\,dx\,dy.
\label{eq:HS-half:seminorm:def}
\end{align}
We recall that the trace $\restrict{\vs}{\bP}$
of a function $\vs\in\HONE(\P)$
belongs to $\HS{\frac12}(\bP)$.
Similar definitions hold for  $\HS{\frac12}(\E)$,
$\HS{-\frac12}(\E)$, $\HS{\frac12}(\Gamma)$,
$\HS{-\frac12}(\Gamma)$, and for the corresponding norms and seminorms.

\medskip

For a given nonnegative integer $\ell$, we let $\PS{\ell}(\omega)$
denote the space of polynomials of degree up to $\ell$ defined on
$\omega$, and we conventionally define $\PS{-1}(\omega)=\{0\}$.
Furthermore, $\PS{\ell}(\Th)$ denotes the space of discontinuous
bivariate polynomials of degree up to $\ell$ defined on the elements
of $\Th$:
\begin{align*}
  \PS{\ell}(\Th) = \big\{ \ps\in\LTWO(\Omega):\,\restrict{\ps}\P\in\PS{\ell}(\P)\,\,\forall\P\in\Th \big\}.
\end{align*}
We let $\monomials{\ell}{\omega}$ denote the set of
scaled monomials on $\omega$ of degree up to $\ell$, given by
\begin{align*}
  \monomials{\ell}{\omega}
  = \left\{
  \ms_\alpha(\xv) = \left(\frac{\xv - \xv_\omega}{h_\omega}\right)^\alpha,
  \quad\alpha\in\mathbb{N}^d\  \text{ with } \ABS{\alpha}\leq\ell
  \right\},
\end{align*}
where $\xv_\omega$ denotes the center of mass of $\omega$ and
$h_\omega$ its diameter. The set $\monomials{\ell}{\omega}$ forms a basis for the space $\PS{\ell}(\omega)$.

\medskip

\medskip
On $\Th$ and for every integer $m>0$, we consider the broken Sobolev
space
\begin{align*}
  \HS{m}(\Th) 
  = \Big\{\,\vs\in\LTWO(\Omega)\,:\,\restrict{\vs}{\P}\in\HS{m}(\P)\,\textrm{for~all~}\P\in\Th\Big\},
\end{align*}
endowed with the broken Sobolev norm and seminorm
\begin{align}
  \label{eq:Hs:norm-broken}
  \norm{\vs}{m,\hh}^2 = \sum_{\P\in\Th}\norm{\vs}{m,\P}^{2}, \qquad   \snorm{\vs}{m,\hh}^2 = \sum_{\P\in\Th}\snorm{\vs}{m,\P}^{2}
  \qquad\forall\,\vs\in\HS{m}(\Th).
\end{align}

\medskip
Let $\E\in \Edges_{\P^+}\cap\Edges_{\P^-}$ be an internal edge shared
by the polygonal elements $\P^{+}$ and $\P^{-}$, and $\vs$ a function
of $\HONE(\Th)$.
We denote 
the traces of $\vs$ on $\E$ from inside the elements $\P^{\pm}$ by
$\vsE^{\pm}$,
and the unit normal vectors to $\E$ pointing from $\P^{\pm}$ to
$\P^{\mp}$ by $\norE^{\pm}$.
Then, we introduce the \emph{jump operator}, which is defined as
\begin{align*}
  \jump{\vs} = 
  \begin{cases}
    \vsE^{+}\norE^{+}+\vsE^{-}\norE^{-} & \mbox{for every internal edge $\E\in\Edges_{\P^{+}}\cap\Edges_{P^{-}}$,}\\
    \vsE\norE                        & \mbox{for every boundary edge $\E\in \EdgesG$.}
  \end{cases}
\end{align*}
The normal vectors to the edges on the domain boundary $\Gamma$ are
pointing out of $\Omega$.

\medskip
For any positive integer  $k$, the nonconforming space
$\HONEnck(\Th)$ is the subspace of the broken Sobolev space
$\HONE(\Th)$ defined as
\begin{align}
  \label{eq:H1-nc:def}
  \HONEnck(\Th) = 
  \left\{\,
    \vs\in\HONE(\Th)\,:\,\int_{\E}\jump{\vs}\cdot\norE\,\qs\,\dS=0\,
    \,\,\forall\,\qs\in\PS{k-1}(\E),
    \,\,\forall\E\in\Edges
    \,
  \right\}.
\end{align}
The nonconforming space with $k=1$ has the minimal regularity that is
required in the formulation of the VEM, see Section~\ref{sec:ncvem},
and for the convergence analysis, see
Reference~\cite{AyusodeDios-Lipnikov-Manzini:2016}.

\medskip

  For the discontinuous functions of $\HONE(\Th)$,
  $\snorm{\cdot}{1,\hh}$ is only a seminorm.
  However, it becomes a norm on the nonconforming space
  $\HONEnck(\Th)$ since the Poincar\'e-Friedrichs type inequality 
  $\norm{\vs}{0}^{2}\leq\Cs\snorm{\vs}{1,\hh}^{2}$ holds for every
  $\vs\in\HONEnck(\Th)$, $k\geq1$.
  Here, $\Cs$ is a real, positive constant independent of $\hh$,
  cf.~\cite{Brenner:2003};
 see also \cite[Lemma
      2.6]{Bertoluzza-Prada:2020}, which can be leveraged to obtain
    such a bound under weaker conditions on the mesh.

\medskip
We introduce the elliptic projection operator
$\PinP{k}:\HONE(\P)\to\PS{k}(\P)$, defined as follows: for every
$\vs\in\HONE(\P)$, the $k$-degree polynomial $\PinP{k}\vs$ is the
solution of the variational problem:
\begin{align*}
  \int_{\P}\nabla\left(\PinP{k}\vs-\vs\right)\cdot\nabla\qs\,\dV
  = 0 \qquad\forall\qs\in\PS{k}(\P),
\end{align*}
with the additional condition
\begin{align}
  \int_{\partial\P}\left(\PinP{k}\vs-\vs\right)\dS=0,
  \label{condconst1} 
\end{align}
which handles the  kernel  of  the gradient  operator.
%
We note that  $\PinP{k}$ is a
polynomial-preserving operator, i.e.,
$\PinP{k}\qs=\qs$ for every polynomial function $\qs\in\PS{k}(\P)$. 

\medskip
\noindent
Finally, throughout the paper we use the notation $\vs\simeq\ws$,
$\vs\lesssim\ws$ and $\vs\gtrsim\ws$ to indicate that there are
suitable positive, real constants $\cs_*$ and $\cs^*$ such that
$\cs_*\vs\leq\ws\leq\cs^*\vs$, $\vs\leq\cs^*\ws$ and
$\vs\geq\cs^*\ws$.
These constants are independent of the mesh size $\hh$ but may depend
on  other discretization parameters  such as  the mesh
regularity constants and the polynomial order of the method.
The constants $\cs_*$ and $\cs^*$, and the generic
  constant $\Cs$, may have a different
value at each occurence.
Moreover, we use the notation
$\langle\Fs,\vs\rangle$ to indicate the action of $\Fs\in\Vsp$ on the
element $\vs\in\Vs$, $\Vs$ and $\Vsp$ being  different couples
of dual reflexive Hilbert spaces, whose precise definition will be
clear from the context.

\section{The nonconforming virtual element method}
\label{sec:ncvem}
We consider the Poisson problem with homogeneous Dirichlet boundary conditions for
the scalar unknown $\us$:
\begin{subequations}\label{eq:Poisson}
  \begin{align}
    -\Delta\us         &= \fs \phantom{0}  \text{in}\;\Omega,\label{eq:Poisson:a}\\
    \us                &= 0 \phantom{\fs}  \text{on}\;\Gamma,\label{eq:Poisson:b}
  \end{align}
\end{subequations}
where we assume that $\fs\in\LTWO(\Omega)$.

Let $\HONE_0(\Omega)$ denote, as usual, the linear subspace of functions of $\HONE(\Omega)$ with
zero trace on $\Gamma$.
The variational formulation of
problem~\eqref{eq:Poisson:a}-\eqref{eq:Poisson:b} reads as:
\begin{align}
  \mbox{\textit{find  $\us\in\HONE_0(\Omega)$ such that}}\quad
  \as(\us,\vs) = \big(\fs,\vs\big)
  \quad\forall\vs\in\HONE_0(\Omega),
  \label{eq:pblm:var}
\end{align}
where the bilinear form $\as(\cdot,\cdot):\HONE(\Omega)\times\HONE(\Omega)\to\REAL$ is
given by
\begin{equation}
  \as(\us,\vs) = \int_\Omega\nabla\us\cdot\nabla\vs\,\dV,\quad
  \forall\us,\vs\in\HONE(\Omega).
  \label{eq:bilform}
\end{equation}
The essential Dirichlet boundary condition~\eqref{eq:Poisson:b} is
incorporated in the definition of the functional space $\HONE_0(\Omega)$.

\medskip
To formulate the nonconforming virtual element approximation of
variational problem~\eqref{eq:pblm:var}, we need three mathematical
objects:
\begin{itemize}
\item[$\bullet$] the virtual element space $\Vhk$, which is a
  finite-dimensional subspace of the nonconforming space
  $\HONEnck(\Th)$, suitably incorporating a weak form of the homogeneous boundary conditions;

  \medskip
\item[$\bullet$] the virtual element bilinear form
  $\ash(\cdot,\cdot):\Vhk\times\Vhk\to\REAL$, which approximates the
  bilinear form $\as(\cdot,\cdot)$.
  We require $\ash(\cdot,\cdot)$ to be coercive, continuous, and
  computable from the degrees of freedom of its arguments;
 \medskip
  
\item[$\bullet$] an element $\fsh$ of the dual space $
  (\HONEnck(\Th))'$, which approximates the forcing term $\fs$.
\end{itemize}

\medskip
Given these objects, according to the variational form of the continuous problem
in~\eqref{eq:pblm:var}, the virtual element method reads as:~
\begin{align}
  \mbox{\textit{Find $\solh\in\Vhk$ such~that}}\quad
  \ash(\solh,\vsh) = \bscal{\fsh,\vsh}
  \quad\forall\vsh\in\Vhk.
  \label{eq:pblm:vem}
\end{align}
In the following sections we recall the definition of the nonconforming virtual element space $\Vhk$, and of the bilinear form $\ash$.


\subsection{The nonconforming virtual element space}
\label{subsec:NCVEM:space}
Let $\P$ be a generic element of the mesh $\Th$ and $k\geq1$ an integer
number.
We define the nonconforming virtual element space on $\P$ (see
\cite{AyusodeDios-Lipnikov-Manzini:2016}) as
\begin{align}
  \VhkP = \bigg\{\,
  \vsh\in\HONE(\P)\,:\,
  &
  \frac{\partial\vsh}{\partial\nv}\in\PS{k-1}(\E)\,\,\forall\E\subset\EdgesP,\,
  \Delta\vsh\in\PS{k-2}(\P)
  \bigg\},
  \label{eq:VhkP:def}
\end{align}
and its ``modified'' or ``enhanced" variant (see
\cite{Ahmad-Alsaedi-Brezzi-Marini-Russo:2013,Cangiani-Manzini-Sutton:2017})
as
\begin{align}
  \Vhken(\P) = \bigg\{\,
  \vsh\in\HONE(\P)\,:\,
  &
  \frac{\partial\vsh}{\partial\nv}\in\PS{k-1}(\E)\,\,\forall\E\subset\EdgesP,\,
  \Delta\vsh\in\PS{k}(\P),\nonumber\\[0.25em]
  &
  \int_{\P}\big(\vsh-\PinP{k}\vsh)\mono\dV=0
  \quad\forall\mono\in\monomials{k}{\P}\setminus\monomials{k-2}{\P}
  \bigg\}.
  \label{eq:VhkenP:def}
\end{align}
We recall that
$\monomials{k}{\P}\setminus\monomials{k-2}{\P}$, in the definition above, is the subset of the scaled monomials of degree  equal to $k-1$ and $k$.

\medskip

\noindent

The following key properties hold for $\VhkP$ and $\VhkenP$:

\begin{enumerate}[label=(\roman{*}),parsep=2mm,leftmargin=*]
\item the polynomial space $\PS{k}(\P)$ is a subspace of both $\VhkP$
  and $\VhkenP$;
\item the virtual element functions in both $\VhkP$ and $\VhkenP$ are
  uniquely determined by the following set of \emph{degrees of
  freedom}:
  \begin{itemize}
  \item[\TERM{D1}] the values of the polynomial moments of $\vsh$ of
    order up to $k-1$ on each edge $\E\in\EdgesP$:
    \begin{equation} 
      \frac{1}{\hE}
      \int_{\E}\vsh\,
      \mono\,\dS
      \quad\forall  \mono \in \monomials{k-1}{\E},\,
      \forall\E\in\EdgesP;
      \label{eq:dofs:01}
    \end{equation}
  \item[\TERM{D2}] the values of the polynomial moments of $\vsh$ of
    order up to $k-2$ on $\P$:
    \begin{align}
      \frac{1}{\mP}\int_{\P}\vsh\,\mono \,\dV
      \quad\forall\mono\in\monomials{k-2}{\P}.
      \label{eq:dofs:02}
    \end{align}
  \end{itemize}
\end{enumerate}

\begin{remark}\label{rem:basis}
Other choices are possible for the degrees of freedom. In \TERM{D1} and \TERM{D2} the sets $\monomials{k-1}{\E}$ and $\monomials{k-2}{\P}$ can be replaced with any other basis for the spaces $\PS{k-1}(\E)$ and $\PS{k-2}(\P)$. We point out that the stabilizing bilinear terms that we are going to construct do not depend on the particular basis chosen and that the bounds that we will prove hold independently of such a choice.   \end{remark}

\medskip
Property $(i)$ is a direct consequence of the space definition and
guarantees the optimal order of approximation. Property $(ii)$ has
been proven in~\cite{AyusodeDios-Lipnikov-Manzini:2016}.

  The
polynomial projection $\PinP{k}\vsh$ is computable using only the
values from the linear functionals in \TERM{D1}--\TERM{D2}.
We recall that 
  for $k>1$, the average of functions in $\VhkP$ is
  computable, and we could replace~\eqref{condconst1} with the condition that
  $\PinP{k}\vs-\vs$ is average free in $\P$.

\medskip
The \emph{global nonconforming virtual element space $\Vhk$ of order
$k\geq1$ subordinate to the mesh $\Th$} is obtained by gluing together
the elemental spaces $\VhkP$ to form a subspace of the nonconforming
space $\HONEnck(\Th)$.
The formal definition reads as:
\begin{align}
  \Vhk:=\Big\{\,\vsh\in\HONEnck(\Th)\,:\,\restrict{\vsh}{\P}\in\VhkP,
  \,\,\,\forall\P\in\Th,\,\,\, \int_{\E}\vsh q = 0, \ \forall q \in \PS{k-1}(\E),\ \forall \E \in \EdgesG \Big\}.
  \label{eq:def:nvem0}
\end{align}
The boundary conditions are enforced in weak form in the definition of the space, by requiring that, for all boundary edges $\E \in \EdgesG$, $v_{|e}$ is orthogonal to the space of  polynomials of degree at most $k-1$ on $\E$. A similar definition holds for the global space $\Vhken$, obtained by
gluing together the elemental spaces $\VhkenP$.
The set of degrees of freedom for $\Vhk$ and $\Vhken$ is given by
collecting the values \textbf{(D1)} for all the mesh edges and
\textbf{(D2)} for all the mesh elements.
The unisolvence of such degrees of freedom
in the global space $\Vhk$ follows from the unisolvence of
the degrees of freedom \textbf{(D1)}--\textbf{(D2)} in each elemental
space, cf. \cite{AyusodeDios-Lipnikov-Manzini:2016}.

\subsection{Virtual element discretization}
\label{subsec:approx}
  Hereafter, we only detail the formulation of the virtual element
  discretization for the \emph{nonenhanced} space $\VhkP$.
  The corresponding formulation for the enhanced space $\Vhken$ is
  identical.

The virtual element approximation is defined on the broken Sobolev
space $\HONEnck(\Th)$.
Since the functions of this space can be discontinuous at the
elemental boundaries $\bP$, we extend the bilinear form
$\as(\cdot,\cdot)$ to the broken Sobolev
space $\HONE(\Th)$ as follows:
\begin{align*}
\as(\us,\vs) 
= \sum_{\P\in\Th}\asP(\us,\vs)
= \sum_{\P\in\Th}\int_{\P}\nabla\us\cdot\nabla\vs\dV
\qquad\forall\us,\vs\in\HONE(\Th).
\end{align*}
The discrete bilinear form $\ash(\cdot,\cdot)$ is given by the sum of
elemental contributions
\begin{align}
  \ash(\ush,\vsh)&=\sum_{\P\in\Th}\ashP(\ush,\vsh),\label{eq:ash:global:def}
\end{align}
with
\begin{align}
  \ashP(\ush,\vsh) = \asP (\PinP{k}\ush,\PinP{k}\vsh) +
  \sP\Big((I-\PinP{k})\ush,(I-\PinP{k})\vsh\Big),
  \label{eq:ash:local:def}
\end{align}
where $\sP(\cdot,\cdot)$ can be any computable, symmetric and positive
semidefinite bilinear form such that
\begin{equation}\label{eq:requirement:S}
  \Cs_*\asP(\vsh,\vsh)
  \leq\sP(\vsh,\vsh)
  \leq\Cs^*\asP(\vsh,\vsh)
  \quad\forall\vsh\in\VhkP\cap\KER\big(\PinP{k}\big)
\end{equation}
for some pair of positive constants $\Cs_*$ and $\Cs^*$ that are
independent of $\P$ and $\hh$, where
$\KER\big(\PinP{k}\big)=\big\{\vs\in\HONE(\P):\PinP{k}\vs=0\big\}$ is
the kernel of the projection operator $\PinP{k}$.


\medskip
{Provided \eqref{eq:requirement:S} holds, the}
discrete bilinear form $\ashP(\cdot,\cdot)$ satisfies the following
properties:

\begin{itemize}[topsep=2mm]
\item[-] {\emph{$k$-consistency}}: for all $\vsh\in\VhkP$ and for all
  $\qs\in\PS{k}(\P)$ it holds that
  \begin{align}
    \ashP(\vsh,\qs) &= \asP(\vsh,\qs);
    \label{eq:k-consistency:ah}
  \end{align}
  
  \medskip
\item[-] {\emph{stability}}: there exist two positive constants
  $(\alpha_*,\,\alpha^*)$, independent of $\P$ and $\hh$, such that
  \begin{align}
    \alpha_*\asP(\vsh,\vsh)
    &\leq\ashP(\vsh,\vsh)
    \leq\alpha^*\asP(\vsh,\vsh)\quad\forall\vsh\in\VhkP.
    \label{eq:stability:a}
  \end{align}
\end{itemize}
In particular, the first term in the definition of $\ashP$
in~\eqref{eq:ash:local:def} provides the $k$-consistency of the
method, i.e., the exactness on polynomials of degree $k$, which follows
from the invariance of $\PinP{k}$ on polynomials.
The second term in the definition of $\ashP$ ensures the stability of
the method, cf.  also~\cite{AyusodeDios-Lipnikov-Manzini:2016}, and is
zero if one of its two entries is a polynomial of degree at most $k$.
The stability property follows from a straightforward calculation, by
taking $\alpha^*=\max(1,\Cs^*)$ and $\alpha_*=\min(1,\Cs_*)$,
cf.~\cite{BeiraodaVeiga-Brezzi-Cangiani-Manzini-Marini-Russo:2013}.

\medskip
As far as the right-hand side is concerned, we approximate $f$ with
$\fsh$ such that $\langle\fsh,\vs\rangle$ is computable (we refer
to
\cite{BeiraodaVeiga-Brezzi-Cangiani-Manzini-Marini-Russo:2013,AyusodeDios-Lipnikov-Manzini:2016}
for more details).

\medskip
In this setting, we can prove the abstract convergence result stated
in Theorem~\ref{theorem:abstract:convergence:scalar} below.
We report this result omitting its proof,
which can be found
in~\cite{AyusodeDios-Lipnikov-Manzini:2016}.

\begin{theorem}[Abstract convergence result]
  \label{theorem:abstract:convergence:scalar}
  Let $\us\in\Vs$ be the solution to problem~\eqref{eq:pblm:var} and
  $\us_h\in\Vhk$ (or $\Vhken$) be the solution to
  problem~\eqref{eq:pblm:var} in the nonconforming setting introduced
  above.
  Then, it holds that
  \begin{align}
    \alpha_*	\norm{\us-\us_h}{1,\hh}
    &\leq\sup_{\ws\in\Vhk\setminus\{0\}}\frac{\ABS{\bil{\fsh}{\wsh}-\scalV{\fs}{\wsh}}}{\SNORM{\wsh}{1,\hh}}
    + \sup_{\ws\in\Vhk\setminus\{0\}}\frac{\ABS{\calN(\us;\wsh)}}{\SNORM{\wsh}{1,\hh}}
    \nonumber\\[0.5em] 
    &	+ 	\alpha^*\inf_{\vsh\in\Vhk}\SNORM{\us-\vsh}{1,\hh} + 	(\alpha^*+1) \inf_{\qs\in\PS{\Th}}
    \SNORM{\us-\qs}{1,\hh}.
  \end{align}
  where $\alpha_*$ and $\alpha*$ are defined in \eqref{eq:stability:a}, and  $\calN(\us;\cdot)$ is the continuous linear functional
  \begin{align*}
    \calN(\us;\wsh) = \as(\us,\wsh) - \big(\fs,\wsh\big),
  \end{align*}
  which defines the conformity error for every virtual element
  function $\wsh$ in $\Vhk$ or $\Vhken$.
\end{theorem}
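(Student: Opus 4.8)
The plan is to adapt the second Strang lemma to the nonconforming virtual element setting, using only the two structural properties of $\ashP$ recorded above: the $k$-consistency \eqref{eq:k-consistency:ah} and the norm equivalence \eqref{eq:stability:a}. Rather than estimating $\us-\solh$ directly, I would first control the purely discrete quantity $\delta_\hh:=\solh-\vsh$ for an arbitrary $\vsh\in\Vhk$, and only at the end recover the error by a triangle inequality and pass to the infimum over $\vsh$. Since $\delta_\hh\in\Vhk\subset\HONEnck(\Th)$, the broken seminorm $\SNORM{\cdot}{1,\hh}$ is a genuine norm on it by the Poincar\'e--Friedrichs inequality recalled above, so the coercivity in \eqref{eq:stability:a} does control $\delta_\hh$; the full broken norm $\norm{\cdot}{1,\hh}$ on the left-hand side is then recovered from the seminorm by the same inequality, the resulting constant being absorbed into the equivalence.

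First I would write $\alpha_*\SNORM{\delta_\hh}{1,\hh}^2\le\ash(\delta_\hh,\delta_\hh)$ from the lower bound in \eqref{eq:stability:a} (together with $\as(\delta_\hh,\delta_\hh)=\SNORM{\delta_\hh}{1,\hh}^2$), expand $\ash(\delta_\hh,\delta_\hh)=\ash(\solh,\delta_\hh)-\ash(\vsh,\delta_\hh)$, and replace the first term by $\bil{\fsh}{\delta_\hh}$ via the discrete equation \eqref{eq:pblm:vem}. Into $\ash(\vsh,\delta_\hh)$ I would insert an arbitrary $\qs\in\PS{k}(\Th)$, splitting it as $\ash(\vsh-\qs,\delta_\hh)+\ash(\qs,\delta_\hh)$, and then invoke $k$-consistency \eqref{eq:k-consistency:ah} elementwise (using the symmetry of $\ashP$) to turn $\ash(\qs,\delta_\hh)$ into the \emph{exact} form $\as(\qs,\delta_\hh)$. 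Writing $\as(\qs,\delta_\hh)=\as(\qs-\us,\delta_\hh)+\as(\us,\delta_\hh)$ and using the definition $\calN(\us;\delta_\hh)=\as(\us,\delta_\hh)-\scalV{\fs}{\delta_\hh}$ gives the identity
\[
  \ash(\delta_\hh,\delta_\hh)=\big[\bil{\fsh}{\delta_\hh}-\scalV{\fs}{\delta_\hh}\big]-\ash(\vsh-\qs,\delta_\hh)-\as(\qs-\us,\delta_\hh)-\calN(\us;\delta_\hh).
\]

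Next I would bound the four terms on the right. The bracketed term and $\calN(\us;\delta_\hh)$ are controlled by the two supremum (dual-norm) quantities in the statement, each times $\SNORM{\delta_\hh}{1,\hh}$. The term $\as(\qs-\us,\delta_\hh)$ is bounded by $\SNORM{\us-\qs}{1,\hh}\,\SNORM{\delta_\hh}{1,\hh}$ through Cauchy--Schwarz. For $\ash(\vsh-\qs,\delta_\hh)$ I would note that $\vsh-\qs$ lies elementwise in $\VhkP$ (because $\PS{k}(\P)\subset\VhkP$), apply the Cauchy--Schwarz inequality for the symmetric positive semidefinite form $\ashP$, and then the upper bound in \eqref{eq:stability:a}, to reach $\alpha^*\SNORM{\vsh-\qs}{1,\hh}\,\SNORM{\delta_\hh}{1,\hh}$. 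Dividing by $\SNORM{\delta_\hh}{1,\hh}$ then yields a bound on $\alpha_*\SNORM{\delta_\hh}{1,\hh}$; combining it with the triangle inequalities $\SNORM{\us-\solh}{1,\hh}\le\SNORM{\us-\vsh}{1,\hh}+\SNORM{\delta_\hh}{1,\hh}$ and $\SNORM{\vsh-\qs}{1,\hh}\le\SNORM{\us-\vsh}{1,\hh}+\SNORM{\us-\qs}{1,\hh}$, and finally taking the infima over $\vsh\in\Vhk$ and $\qs\in\PS{k}(\Th)$ independently, collects the four contributions into the asserted four-term bound, the precise constants being a matter of this final bookkeeping.

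The step I expect to be the genuine obstacle, and the one that distinguishes the nonconforming analysis from the conforming one, is the treatment of the conformity error $\calN(\us;\cdot)$. In the conforming case $\Vhk\subset\HONE_0(\Omega)$ and Galerkin orthogonality forces this term to vanish; here $\Vhk\not\subset\HONE_0(\Omega)$, so $\calN(\us;\cdot)=\as(\us,\cdot)-\scalV{\fs}{\cdot}$ is generically nonzero and must be retained as a bona fide error source. Showing that it is a bounded linear functional on $\Vhk$ (and, in a subsequent convergence estimate, that its dual norm is small) is exactly where the nonconforming matching conditions built into the definition \eqref{eq:def:nvem0} of $\Vhk$ enter; within the present abstract statement it is simply carried along as the second supremum term.
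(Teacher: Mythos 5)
Your argument is correct and is essentially the standard second--Strang--lemma proof that the paper itself omits and delegates to the cited reference \cite{AyusodeDios-Lipnikov-Manzini:2016}: coercivity of $\ash$ applied to $\delta_\hh=\solh-\vsh$, insertion of an arbitrary $\qs\in\PS{k}(\Th)$, $k$-consistency to replace $\ash(\qs,\delta_\hh)$ by $\as(\qs,\delta_\hh)$, and the resulting four-term splitting into load-approximation, conformity, best-approximation and polynomial-approximation contributions. The only discrepancies are cosmetic bookkeeping: your final triangle inequalities produce the coefficient $\alpha_*+\alpha^*$ rather than $\alpha^*$ in front of $\inf_{\vsh\in\Vhk}\SNORM{\us-\vsh}{1,\hh}$, and passing from the broken seminorm you actually control to the full broken norm $\norm{\us-\solh}{1,\hh}$ on the left-hand side costs a further Poincar\'e--Friedrichs constant --- neither of which affects the substance of the estimate.
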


Bounds on the different terms on the right hand side are provided in
\cite{AyusodeDios-Lipnikov-Manzini:2016} and, if the solution $u$ is
smooth, they yield optimal error estimates, provided
$(1+\alpha^*)/\alpha_*$ is bounded uniformly in $h$. The aim of this
paper is to design the stabilization term $\sP$ so that this holds
true.


\section{Algebraic construction of semi-inner products and 
  semi-norms in abstract finite dimensional subspaces}
\label{sec:dual:scalar:product}

The focus of this paper is on the construction of suitable bilinear
forms $\sP$ satisfying \eqref{eq:requirement:S} under conditions on
the mesh $\Th$ as weak as possible.
  This problem has been addressed in \cite{BeiraodaVeiga-Lovadina-Russo:2017} for the
  conforming virtual element method. In that case, after showing that it is sufficient for the stabilizing bilinear form to only act on the trace of the virtual function on $\bP$, one can take advantage of the computability of such traces, which are known piecewise polynomials.
  In the nonconforming case, we have an additional difficulty: contrary to what happens in the conforming case,
  the trace on $\bP$ of the nonconforming virtual element functions
  is not computable, and we only have access to the degrees of
  freedom, which correspond to known functionals in the space
  $(\HONE(\P))'$. 
  Our idea is to design suitable bilinear forms on the space spanned
  by such functionals and build the stabilization term by a duality
  technique first introduced in \cite{Bertoluzza_algebraic_dual}, which, in this section, we present in a general abstract setting.

\medskip
Let $\Vs$ be a Hilbert space and $\Vsp$ its dual space, respectively
endowed with the inner products $\scalV{\spcdot}{\spcdot}$ and
$\scalVp{\spcdot}{\spcdot}$, and the induced norms $\normV{\spcdot}$
and $\normVp{\spcdot}$.
We denote the duality product by $\bil{\spcdot}{\spcdot}$ and use
Roman fonts for the elements of $\Vs$ and Greek fonts for the elements
of $\Vsp$.
In addition, we consider:
\begin{itemize}[parsep=2mm,leftmargin=*,labelindent=\parindent]
\item a continuous seminorm $\snormV{\spcdot}:\Vs\to\REAL^+$ with
  kernel $\Wbar\subset\Vs$; without loss of generality, after possibly
  multiplying the seminorm by a fixed constant, we can assume that
  $\snormV{ \cdot }\leq \normV {\cdot}$;
  
\item a projection operator $\Pbar:\Vs\to\Wbar$, which is linear,
  bounded and idempotent, i.e.,
  \begin{align*}
    \normV{\Pbar\vs}\leq\CPihat\normV{\vs}\quad\text{for~every $\vs\in\Vs$},
    \qquad\text{and}\qquad
    \Pbar^2=\Pbar;
  \end{align*}
  
\item the seminorm $\snormVp{\spcdot}:\Vsp\to\REAL^+$, 
  defined by duality with the seminorm $\snormV{\spcdot}$:
  \begin{align}
    \forall\eta\in\Vsp:\,
    \snormVp{\eta}=\sup_{\vs\in\KER(\Pbar)}\frac{\bil{\eta}{\vs}}{\snormV{\vs}};
    \label{eq:snorm:Vp:def}
  \end{align}
  
\item the projection operator $\Pbars:\Vsp\to\Vsp$, which is the
  adjoint of $\Pbar$ with respect to the duality product
  $\bil{\spcdot}{\spcdot}$:
  \begin{align*}
    \bil{\Pbars\eta}{\vs}
    =   \bil{\eta}{\Pbar\vs} 
    \qquad\forall\eta\in\Vsp,\,\vs\in\Vs.
  \end{align*}
  This definition implies that the operator $\Pbars$ is also linear,
  bounded and idempotent, i.e.,
  \begin{align*}
    \normVp{\Pbar\eta}\leq\CPihat\normVp{\eta}\quad\text{for~every $\eta\in\Vsp$},
    \qquad\text{and}\qquad
    (\Pbars)^2=\Pbars.
  \end{align*}
\end{itemize}

\medskip
We make the following assumptions:

\medskip\noindent
\TERM{A1} the space $\Wbar=\KER\big(\snormV{\spcdot}\big)$ is finite
dimensional; 

\smallskip\noindent
\TERM{A2} a Poincar\'e type inequality of the form  $\normV{\vs}\leq\Cpoinc\snormV{\vs}$
holds on $\KER(\Pbar)$.

\medskip
\noindent
Assumptions \TERM{A1}--\TERM{A2} and the previous definitions imply
that
\begin{itemize}[parsep=2mm,leftmargin=*,labelindent=\parindent]
\item the subspace $\Wbars=\Pbars(\Vsp)\subset \Vsp$ is finite
  dimensional and coincides with the kernel of the dual seminorm
  $\snormVp{\spcdot}$, i.e., $\Wbars=\KER(\snormVp{\spcdot})$;
\item  the following equivalence relations in  $\Vs$ holds:
  \begin{align}
\snormV {\vs} =    \snormV{\vs-\Pbar\vs}\leq  \normV{\vs-\Pbar\vs} \leq \Cpoinc | v |
    \quad\forall\vs\in\Vs,
    \label{eq:equivalence:norm:snorm:V}
  \end{align}
  from which, by triangular inequality, we can prove the
  Poincar\'e-like inequality
  \begin{align*}
    \normV{\vs}\leq\Cpoinc\snormV{\vs}+\normV{\Pbar\vs}
    \quad\forall\vs\in\Vs;
  \end{align*}
\item the following equivalence relations in $\Vsp$ holds:
  \begin{align}
    \Cpoinc^{-1}\snormVp{\eta}
    = \Cpoinc^{-1}\snormVp{\eta-\Pbars\eta}
    \leq\normVp{\eta-\Pbars\eta} \leq \snormVp{\eta}
    \quad\forall\eta\in\Vsp; 
    \label{eq:equivalence:norm:snorm:Vp}
  \end{align}
\item the seminorm $\snormV{\spcdot}$ and the seminorm defined in
  $\Vs$ by duality with the seminorm $\snormVp{\spcdot}$ in $V'$ are
  equals:
  \begin{align}
    \snormV{\vs} =
    \sup_{\eta\in\KER(\Pbars)}\frac{\bil{\vs}{\eta}}{\snormVp{\eta}};
    \label{eq:reflexivity}
  \end{align}
\item the identity and inequality chain
    \begin{align}
      \ABS{\bil{\eta}{\vs}}
      = \ABS{\bil{\eta-\Pbars\eta}{\vs}}
      = \ABS{\bil{\eta}{\vs-\Pbar\vs}}
      \leq \snormVp{\eta}\,\snormV{\vs-\Pbar\vs}
      = \snormVp{\eta}\,\snormV{\vs}
      \label{eq:bil:bound:seminorm}
    \end{align}
    holds for every $\eta\in\KER(\Pbars)$ and $\vs\in\Vs$ and follows
    from the definition of the seminorms $\snormV{\vs}$ and
    $\snormVp{\eta}$, and
    \eqref{eq:equivalence:norm:snorm:V}-\eqref{eq:equivalence:norm:snorm:Vp}.
\end{itemize}

\medskip
We now introduce two finite dimensional subspaces $\Ws\subset\Vs$ and
$\Wss \subset \Vsp$, and make the further assumptions:

\smallskip\noindent
\TERM{A3} $\Wbar\subset\Ws$ and $\Wbars\subset \Wss$;

\smallskip\noindent
\TERM{A4} the two following inf-sup conditions hold for the pair of
spaces $\Ws$ and $\Wss$:
\begin{align}
  \inf_{\ws\in\Ws}\sup_{\eta\in\Wss}\frac{\bil{\eta}{\ws}}{\normVp{\eta}\,\normV{\vs}}\geq\constinfsup
  \quad\textrm{and}\quad
  \inf_{\eta\in\Wss}\sup_{\ws\in\Ws}\frac{\bil{\eta}{\ws}}{\normVp{\eta}\,\normV{\vs}}\geq\constinfsup.
  \label{eq:abstractinfsup}
\end{align}
Remark that if both inf-sup conditions hold, then we have that
$\dims(\Ws) = \dims(\Wss)$.
On the other hand, if $\dims(\Ws) = \dims(\Wss)$, then either one of the
two inf-sup conditions in \eqref{eq:abstractinfsup} implies the other.

Furthermore, using the inf-sup conditions above we can prove the
equivalence relation in $\Wss$:
\begin{align}
  \Cpoinc^{-1}\beta \snormVp{\eta}
  \leq \sup_{\ws\in\Ws\cap\KER(\Pbar)}\frac{\bil{\eta}{\ws}}{\snormV{\vs}}
  \leq \snormVp{\eta}
  \qquad\forall\eta\in\Wss.
  \label{eq:seminorm:equivalence:V:W}
\end{align}

\medskip
\noindent
Let now $\Ns=\dims(\Ws)=\dims(\Wss)$ and $\Ms=\dims(\Wbar)=\dims(\Wbars)$.
We consider a set of elements $\basis=\{\ebase_m\}_{m=1,\ldots,\Ns}$,
forming a basis for the space $\Ws$, and the
corresponding set of elements $\basis^*=\{\te_n\}_{n=1,\ldots,\Ns}$,
forming a basis for the space $\tW $ and satisfying the
biorthogonality property
\begin{align}
  \bil{\te_n}{\ebase_m} = \delta_{n,m}
  \quad\ms,\ns=1,\ldots,\Ns.
  \label{eq:biorthogonality}
\end{align}
The validity of the inf-sup condition \eqref{eq:abstractinfsup}
implies that such a basis $\basis^*$ exists.
Analogously, we will consider a set of elements
$\hbasis=\{\hebase_j\}_{j=1,\ldots,\Ms}$, forming a basis for $\Wbar$
and the corresponding set of elements
$\hbasis^*=\{\hte_i\}_{i=1,\ldots,\Ms}$, forming a basis for the space
$\Wbars $, and such that, for all $\vs\in \Vs$, we have
\begin{align}
  \Pbar \vs = \sum_{i=1}^\Ms \langle \hte_i,\vs \rangle \hebase_i.
  \label{eq:analyticPihat}
\end{align}
  As $\Pbar$ is a
  projector,
  the basis sets $\hbasis$ and $\hbasis^*$ satisfy a biorthogonality
  property analogous to \eqref{eq:biorthogonality}.

As $\basis$ and $\basis^*$ are bases for $\Ws$ and $\Wss$, we can
expand any $\vs\in\Ws$ and $\zeta\in\Wss$ as:
\begin{align*}
  \vs=\sum_{m=1}^{\Ns}\vs_m\ebase_m \quad\textrm{and}\quad
  \zeta=\sum_{n=1}^{\Ns}\zeta_n\te_n.
\end{align*}
The $\Ns$-sized vectors $\vv=(\vs_m)$ and $\zetav=(\zeta_n)$ collect
the expansion coefficients of $\vs$ and $\zeta$ and
  are respectively referred to as \emph{the vector representations} of
  $\vs$ and $\zeta$.
We will use an analogous notations for the elements of $\Wbar$ and
$\Wbars$, which will be represented by $\Ms$-sized vectors collecting
the coefficients of their expansions in terms of the bases $\hbasis$
and $\hbasis^ *$.
According with this basis choice, thanks to the biorthogonality
property, we can express the duality product between $\zeta\in \Wss$
and $\vs\in \Ws$ as follows:
\begin{align}
  \bil{\zeta}{\vs} = \zetav^T\vv.
  \label{eq:duality-product:vector:representation}
\end{align}
Now, we consider a symmetric and positive semidefinite matrix
$\matS\in\REAL^{\Ns\times\Ns}$ and the bilinear form
$\ss(\cdot,\cdot):\Ws\times\Ws\to\REAL$ defined by
\begin{align}
  \ss(\vs,\ws) = \wv^T\matS\vv,
  \label{eq:normV:matrix:representation}
\end{align}
where $\vv,\wv\in\REAL^{\Ns}$ are the vector representations of
$\vs,\ws\in\Ws$.
We assume that  there
exist positive constants $\CsM$ and $\alpha$ such that for all $\vs,
\ws \in \Ws$
\begin{align}
  \ss(\vs,\ws)\leq\CsM\snormV{\vs}\,\snormV{\ws}
  \quad\textrm{and}\quad
  \alpha\snormV{\vs}^2\leq\ss(\vs,\vs).
  \label{eq:continuity-semicoercivity:s}
\end{align}


We next introduce a reflexive generalized inverse $\invS \in
\REAL^{\Ns\times\Ns}$ of $\matS$, which we define as follows.
Let $\matP\in\REAL^{\Ms\times\Ns}$ be the matrix representation of the
projection operator $\Pbar$, defined in such a way that
$\matP\wv\in\REAL^{\Ms}$ is the vector representing $\Pbar\ws \in
\Wbar$ if $\wv\in\REAL^{\Ns}$ is the vector representing $\ws\in\Ws$.
  The matrix $\matP$ has maximum rank,
i.e. $\RANK(\matP)=\min(\Ms,\Ns)=\Ms$, and it
projects onto the kernel of $\matS$, which coincides with the kernel
of $|\cdot|$.

Then, given $\etav\in\REAL^{\Ns}$, the saddle point problem
\begin{align}
  \left\{\!\!
  \begin{array}{ll}
    \matS\wv + \matP^T\lbdv &= \etav,\\
    \matP\wv                &= \zerov,
  \end{array}
  \right.
  \label{eq:reflexive:generalized:inverse:mixed:pblm}
\end{align}
has a unique solution $(\wv,\lbdv)\in\REAL^{\Ns}\times\REAL^{\Ms}$,
and the corresponding coefficient matrix
is nonsingular~\cite{Brezzi:2002:Durham}. 
Then, we set
\begin{equation}\label{defSinv}
  \invS \etav = \wv
  \qquad \text{with $(\wv,\lbdv)\in \REAL^ \Ns \times \REAL^ \Ms$ solution to \eqref{eq:reflexive:generalized:inverse:mixed:pblm},}
\end{equation}
or, equivalently
\begin{equation}\label{defSinv:alt}
 \invS \etav  =
 \big( \matI \quad 0 \big)
 \left(\begin{array}{cc}\matS & \matP^T \\ \matP & \zerov \end{array}\right)^{-1}
 \left(\begin{array}{c}\matI \\ \zerov\end{array}\right)\,\etav,
\end{equation}
which  gives us
\begin{align*}
  \invS = 
  \big( \matI \quad 0 \big)
  \left(\begin{array}{cc}\matS & \matP^T \\ \matP & \zerov \end{array}\right)^{-1}
  \left(\begin{array}{c}\matI \\ \zerov\end{array}\right),
\end{align*}
from which we also deduce that $\invS$ is a symmetric matrix.
  In this setting, the saddle point problem
  \eqref{eq:reflexive:generalized:inverse:mixed:pblm} is well posed,
  and
$\wv=\invS\etav\in\REAL^{\Ns}$ if and only if there exists
a vector $\lbdv\in\REAL^{\Ms}$ such that the pair $(\wv,\lbdv)$
satisfies \eqref{eq:reflexive:generalized:inverse:mixed:pblm}.
 By exploiting such a fact, it can be
  shown that  the matrices $\matS$ and $\invS$ satisfy the identities
\begin{align}
  \matS\invS\matS = \matS
  \quad\textrm{and}\quad
  \invS\matS\invS=\invS,
  \label{eq:reflexive:generalized:inverse:properties}
\end{align}
so that $\invS$ is indeed a reflexive generalized inverse of $\matS$
and viceversa.
  If $\matPs\in\REAL^{M\times N}$ is the matrix representing $\Pbars$,
  we can prove that

\begin{align}
  \invS\matS = \matI_{\Ns} - (\matPs)^T\matP,
  \label{eq:invSmatS}
\end{align}
where $\matI_{\Ns}\in\REAL^{\Ns\times\Ns}$ is the identity
matrix.

Using the biorthogonality property
  \eqref{eq:biorthogonality}, we can show that $\matPs$ coincides with the matrix 
  representing the inclusion of $\Wbar$ into $\Ws$: if $\widehat w \in \Wbar \subset \Ws$ is written as
  \[
  \widehat w = \sum_{i=1}^M \widehat w_i \hebase_i = \sum_{n=1}^N  w_n \ebase_n,
  \]
  the vectors
  $\widehat\wv = (\widehat w_i)$ and $\wv = (w_n)$ satisfy $\wv = \matPs \widehat \wv$. We can then see that the matrix $\matI_{\Ns} - (\matPs)^T\matP$ represents the operator $(1-\Pbar)$. 

\begin{remark}
  The
  projector $\Pbar$  has different matrix
  representations, depending on whether it is seen as an operator
  from $\Ws$ to $\Wbar$ or as an operator
  from $\Ws$ to $\Ws$.
  In the first case, $\Pbar w $  is represented
  with respect to the basis $\hbasis$ and the
  operator is represented by the $\Ms \times \Ns$ matrix $\matP$.
  In the second case, the basis used to express $\Pbar w $
   is $\basis$  and the
  operator is represented by the $\Ns \times \Ns$ matrix $(\matPs)^T \matP$. An analogous observation holds for the operator $\Pbars$. 
\end{remark}

\

We have now all the ingredients to define a bilinear form on $\Wss$
acting, on such a subspace, as a semi-inner product inducing a
semi-norm equivalent to the dual semi-norm $| \cdot |_*$.
More precisely, the bilinear form $\sss: \Wss \times \Wss \to \REAL$
is defined by
\begin{align*}
  \sss(\eta,\zeta) = \etav^T \invS \zetav,
\end{align*}
where, once again, $\etav$, $\zetav\in\REAL^{\Ns}$ are the vector
representation of $\eta$, $\zeta\in\Wss$.
As $\invS$ is symmetric and positive semidefinite, $\sss(\cdot,\cdot)$
is indeed a semi-inner product on $\Wss$, and we have the following
Proposition.
\begin{proposition}\label{prop:2.7}
  For every $\eta$, $\zeta\in\Wss$, it holds that
  \begin{equation}\label{eq:prop:2.7}
    \Cpoinc^{-2}  \CsM^{-1}\beta^2\snormVp{\eta}\leq\sss(\eta,\eta),
    \quad\textrm{and}\quad
    \sss(\eta,\zeta)
    \leq \alpha^{-1}\snormVp{\eta}\,\snormVp{\zeta}.
  \end{equation}
\end{proposition}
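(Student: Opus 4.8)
The plan is to read off both claimed estimates from a single identity that ties the semi-inner product $\sss$ back to the bilinear form $\ss$ and to the duality pairing. Fix $\eta\in\Wss$ with vector representation $\etav$, and let $(\wv_0,\lbdv)$ be the unique solution of the saddle point problem \eqref{eq:reflexive:generalized:inverse:mixed:pblm} with right-hand side $\etav$, so that $\wv_0=\invS\etav$ by \eqref{defSinv}. Denote by $\ws_0\in\Ws$ the element whose vector representation is $\wv_0$; the second equation $\matP\wv_0=\zerov$ says exactly that $\ws_0\in\Ws\cap\KER(\Pbar)$. The first task is to record the identity
\[
\sss(\eta,\eta)=\wv_0^T\matS\wv_0=\ss(\ws_0,\ws_0)=\bil{\eta}{\ws_0},
\]
which I expect to drive everything. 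Indeed $\sss(\eta,\eta)=\etav^T\invS\etav=\etav^T\wv_0$; substituting $\etav=\matS\wv_0+\matP^T\lbdv$ and using $\matP\wv_0=\zerov$ gives $\etav^T\wv_0=\wv_0^T\matS\wv_0$, which equals $\ss(\ws_0,\ws_0)$ by \eqref{eq:normV:matrix:representation} and equals $\bil{\eta}{\ws_0}$ by \eqref{eq:duality-product:vector:representation}.

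With this identity in hand, the coercivity bound---which, by homogeneity, reads $\Cpoinc^{-2}\CsM^{-1}\beta^2\snormVp{\eta}^2\le\sss(\eta,\eta)$---follows from the lower inf-sup estimate \eqref{eq:seminorm:equivalence:V:W}. For an arbitrary $\ws\in\Ws\cap\KER(\Pbar)$ with representation $\wv$, the same substitution together with $\matP\wv=\zerov$ yields $\bil{\eta}{\ws}=\wv_0^T\matS\wv=\ss(\ws,\ws_0)$. Since $\matS$ is symmetric positive semidefinite, $\ss$ is a genuine semi-inner product, so the Cauchy--Schwarz inequality for $\ss$ and the continuity half of \eqref{eq:continuity-semicoercivity:s} give
\[
\bil{\eta}{\ws}\le\ss(\ws,\ws)^{1/2}\ss(\ws_0,\ws_0)^{1/2}\le\CsM^{1/2}\snormV{\ws}\,\sss(\eta,\eta)^{1/2}.
\]
Dividing by $\snormV{\ws}$, taking the supremum over $\ws\in\Ws\cap\KER(\Pbar)$, and invoking the lower bound in \eqref{eq:seminorm:equivalence:V:W} produces $\Cpoinc^{-1}\beta\,\snormVp{\eta}\le\CsM^{1/2}\sss(\eta,\eta)^{1/2}$; squaring is the claim.

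For the continuity bound I would start from $\sss(\eta,\eta)=\bil{\eta}{\ws_0}$ with $\ws_0\in\KER(\Pbar)$ and apply the very definition \eqref{eq:snorm:Vp:def} of the dual seminorm to get $\bil{\eta}{\ws_0}\le\snormVp{\eta}\,\snormV{\ws_0}$. The factor $\snormV{\ws_0}$ is controlled by the semicoercivity half of \eqref{eq:continuity-semicoercivity:s}: from $\alpha\snormV{\ws_0}^2\le\ss(\ws_0,\ws_0)=\sss(\eta,\eta)$ we get $\snormV{\ws_0}\le\alpha^{-1/2}\sss(\eta,\eta)^{1/2}$. Combining, $\sss(\eta,\eta)\le\alpha^{-1/2}\snormVp{\eta}\,\sss(\eta,\eta)^{1/2}$, and cancelling one factor of $\sss(\eta,\eta)^{1/2}$ (the case $\sss(\eta,\eta)=0$ being trivial) yields $\sss(\eta,\eta)\le\alpha^{-1}\snormVp{\eta}^2$. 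Finally, since $\invS$ is symmetric positive semidefinite, $\sss$ is itself a semi-inner product, so the mixed estimate follows from the scalar Cauchy--Schwarz inequality, $\sss(\eta,\zeta)\le\sss(\eta,\eta)^{1/2}\sss(\zeta,\zeta)^{1/2}\le\alpha^{-1}\snormVp{\eta}\,\snormVp{\zeta}$.

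The main obstacle I anticipate is organizational rather than conceptual: correctly exploiting the saddle point structure \eqref{eq:reflexive:generalized:inverse:mixed:pblm} to rewrite the algebraic quantity $\etav^T\invS\etav$ as the analytic pairing $\bil{\eta}{\ws_0}$ against a test element $\ws_0$ that \emph{genuinely} lies in $\Ws\cap\KER(\Pbar)$, so that the dual-seminorm definition \eqref{eq:snorm:Vp:def} and the inf-sup equivalence \eqref{eq:seminorm:equivalence:V:W} apply without modification. The well-definedness of $\ws_0=\invS\etav$ is guaranteed by the unique solvability of \eqref{eq:reflexive:generalized:inverse:mixed:pblm}, and the symmetry and positive semidefiniteness of $\invS$---needed both to interpret $\sss$ as a semi-inner product and to apply the final Cauchy--Schwarz step---are recorded just before the statement of the Proposition.
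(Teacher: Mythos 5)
Your proof is correct and follows essentially the same route as the paper's: both bounds are obtained by pairing $\eta$ against elements of $\Ws\cap\KER(\Pbar)$, invoking the continuity and semicoercivity of $\ss$ in \eqref{eq:continuity-semicoercivity:s} together with the inf-sup equivalence \eqref{eq:seminorm:equivalence:V:W}, and finishing with Cauchy--Schwarz for the semi-inner product $\sss$. The only (cosmetic) difference is that you extract the key identity $\bil{\eta}{\ws}=\ss(\ws,\ws_0)$ directly from the saddle-point equations \eqref{eq:reflexive:generalized:inverse:mixed:pblm}, whereas the paper routes it through the identity \eqref{eq:invSmatS} and a factorization $\invS=\matG^T\matG$; you also correctly note that the exponent on $\snormVp{\eta}$ in the first inequality of \eqref{eq:prop:2.7} should be $2$.
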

\begin{proof}
  Let $\eta\in\Wss$ and $\ws\in\Ws\cap\KER(\Pbar)$ with vector
  representations $\etav$, $\wv\in\REAL^{\Ns}$, and recall that
  $(\matI_{\Ns}-(\matPs)^T\matP)\wv$ is the vector representation of
  $\ws-\Pbar\ws$.
  Since $\Pbar\ws=0$, \eqref{eq:invSmatS} yields
  \begin{align*}
    \bil{\eta}{\ws}
    = \bil{\eta}{\ws-\Pbar\ws}
    = \etav^T\big(\matI_{\Ns}-(\matPs)^T\matP\big)\wv
    = \etav^T\invS\matS\wv.
  \end{align*}
  The matrix $\invS$ is symmetric and positive semidefinite, so there
  exists a $\Ns \times \Ns$ matrix $\matG$ such that
  $\invS=\matG^T\matG$.
  We substitute such decomposition, we apply the Cauchy-Schwarz
  inequality and the first indentity
  of~\eqref{eq:reflexive:generalized:inverse:properties}, and we find
  that
  \begin{align*}
    \etav^T\invS\matS\wv
    &= (\matG\etav)^T\,(\matG\matS\wv)
    \leq \sqrt{\etav^T\matG^T\matG\etav}\,\sqrt{\wv^T\matS^T\matG^T\matG\matS\wv}
    = \sqrt{\etav^T\invS\etav}\,\sqrt{\wv^T\matS\invS\matS\wv}
    \\[0.25em]
    &= \sqrt{\etav^T\invS\etav}\,\sqrt{\wv^T\matS\wv}
    \leq \CsM^{1/2} \sqrt{\etav^T\invS\etav}\,\snormV{\ws}.
  \end{align*}
  Then for
  $\ws \in\Ws\cap\KER(\Pbar)$ we have that
  \begin{align*}
    \frac{\bil{\eta}{\ws}}{\snormV{\ws}}\leq \CsM^{1/2} \sqrt{\etav^T\invS\etav},
  \end{align*} and,
  using the lower bound in ~\eqref{eq:seminorm:equivalence:V:W}, we find that for every $\eta\in\Wss$
  \begin{align*}
    \snormVp{\eta} \leq\Cpoinc \beta^{-1}
    \sup_{\ws\in\Ws\cap\KER(\Pbar)}\frac{\bil{\eta}{\ws}}{\snormV{\ws}}
    \leq \Cpoinc \beta^{-1} \CsM^{1/2}  \sqrt{\etav^T\invS\etav},
  \end{align*}
  which gives us the first bound in \eqref{eq:prop:2.7}.
  Conversely, for any given $\eta\in\Wss$ and its vector
  representation $\etav\in\REAL^{\Ns}$, we let $\ws\in\Ws$ be the
  element with vector representation $\wv=\invS\etav$.
  Then, we start from the vector representation of the duality
  product~\eqref{eq:duality-product:vector:representation}, use
  inequality~\eqref{eq:bil:bound:seminorm}, the matrix representation
  of $\ws$ in~\eqref{eq:normV:matrix:representation}, and the second
  indentity of~\eqref{eq:reflexive:generalized:inverse:properties}
  and, since, by the definition of $\invS$, $w \in \ker(\Pbar)$, we
  can write:
  \begin{align*}
    \etav^T\invS\etav
    &= \etav^T\wv
    = \bil{\eta}{\ws}
    \leq \snormVp{\eta}\,\snormV{\ws}
    \leq \alpha^{-1/2} \snormVp{\eta}\,\sqrt{\wv^T\matS\wv}\\[2mm]
    & = \alpha^{-1/2} \snormVp{\eta}\,\sqrt{\etav^T\invS\matS\invS\etav}
    = \alpha^{-1/2} \snormVp{\eta}\,\sqrt{\etav^T\invS\etav}.
  \end{align*}
  We divide both sides by $\sqrt{\etav^T\invS\etav}$ and obtain that
  $\sqrt{\etav^T\invS\etav}\leq\alpha^{-1/2}\snormVp{\eta}$.
  Analogously, we have that
  $\sqrt{\zetav^T\invS\zetav}\leq\alpha^{-1/2}\snormVp{\zeta}$.

  \medskip
  Then, to prove the second relation in~\eqref{eq:prop:2.7}, we simply
  apply the Cauchy-Schwartz inequality and the above bounds, and we
  obtain
  \begin{align*}
    \sss(\eta,\zeta)
    = \zetav^T\invS\etav
    \leq \sqrt{\zetav^T\invS\zetav}\,\sqrt{\etav\invS\etav}
    \leq \alpha^ {-1} \snormVp{\eta}\,\snormVp{\zeta}.
  \end{align*}
\end{proof}

\begin{remark} 
  \label{rem:twicedual}
  We can also use the above approach to build
  a semi-inner product equivalent to $\snormV{\cdot}$ in any finite
  dimensional subspace $\widetilde{\Ws}\subset\Vs$ containing $\Wbar$
  and verifying inf-sup conditions of the form
  \eqref{eq:abstractinfsup}, with $\widetilde{\Ws}$ replacing $\Ws$.
  By applying the same reasoning as above with the roles of $\Vs$ and
  $\Vsp$ switched, we  introduce the reflexive
  generalized inverse $\invinvS$ of $\invS$ defined as
  \begin{align*}
    \invinvS = 
    \big( \matI \quad 0 \big)
    \left(\begin{array}{cc}\invS & (\matP^*)^T \\ \matP^*& \zerov \end{array}\right)^{-1}
    \left(\begin{array}{c}\matI \\ \zerov\end{array}\right),   
  \end{align*}
  where $\matP^*$ is the matrix realizing the adjoint projector
  $\Pbar^*$. Under our assumptions it is possible to prove that
  $\invinvS = \matS$.
  Then, we  define the bilinear form
  $\widetilde{s}:\widetilde{\Ws}\times\widetilde{\Ws}\to\REAL$ as
  \begin{align*}
    \widetilde{s}(\widetilde\ws,\widetilde\vs):=\vv^T\invinvS\wv = \vv^T\matS\wv,
  \end{align*}
  where $\vv$ and $\wv$ are, this time, the vectors
  representing the functions $\widetilde\ws$ and $\widetilde\vs$ with
  respect to the basis
  $\widetilde\basis=\{\widetilde\ebase_m\}_{m=1,\ldots,\Ns}$ for
  $\widetilde \Ws$, that is biorthogonal to $\basis^*$.
  By applying Proposition \ref{prop:2.7},
    we find that
  \begin{align*}
    \widetilde{s}(\widetilde \vs,\widetilde\vs)\gtrsim\ABS{\vs}^2,
    \qquad
    \widetilde{s}(\widetilde \vs,\widetilde\ws)\lesssim\ABS{\vs}\,\ABS{\ws}
  \end{align*}
 for all $\widetilde\vs,\widetilde\ws \in
    \widetilde\Ws$.
The implicit constants in these bounds depend only on the constants
  $\Cpoinc$, $\CsM$, $\beta$ and $\alpha$, and on the inf-sup constant
  $\widetilde \beta$ relative to the duality between $\Wss$ and
  $\widetilde \Ws$.
  In other words, the ``stiffness'' matrix $\matS$ constructed on
  $\Ws$ can be used to define an equivalent semi-inner product on any
  other subspace $\widetilde\Ws\subset\Vs$ containing $\Wbar$ and
  verifying an inf-sup conditions of the form
  \eqref{eq:abstractinfsup}.
\end{remark}

  

\newcommand{\Poly}{\mathbb{P}}
\newcommand{\Polyort}{\mathring{\mathbb{P}}}
\newcommand{\PolyHarm}{\mathcal{A}}
\newcommand{\constants}{\mathbb{P}_0}
\newcommand{\pibou}{\Pi^\partial_\P}

\section{Stabilization in the nonconforming virtual element method}
\label{sec:stab:theory}

We now focus on the problem of building stabilization terms for the
nonconforming virtual element method described in Section~\ref{sec:ncvem}.
The aim is to achieve robustness with respect to the mesh size, under
as weak assumptions as possible, on the shape of the elements. We start by making the following minimal shape regularity assumption, which we assume to be always satisfied:
\begin{itemize}[parsep=1mm,topsep=2mm]
	\item[\SRone] \label{eq:SR.i} there exist a positive constant
	$\gamma_0$ such that for all $\Th$, every element $\P\in\Th$ is
	star-shaped with respect to a ball of radius greater than
	$\gamma_0\hP$.
\end{itemize}
  We have the following lemma, whose proof is postponed to
  Appendix~\ref{appendix:enhanced}.
\begin{lemma}
  \label{lemma:enhanced}
  Let $\vsh\in\VhkP$ and $\vshh\in\VhkenP$ be  virtual
  element functions satisfying
  \begin{align}\label{eq:cond:v:vhat}
    \int_{\E}\vsh\eta
    = \int_{\E}\vshh\eta\quad\
    \forall\eta\in\PS{k-1}(\E),
    \forall\E\in\EdgesP,\qquad\int_{\P}\vsh\qs
    = \int_{\P}\vshh\qs,\
    \forall\qs\in\PS{k-2}(\P).
  \end{align}
  Then, it holds that
  \begin{align*}
    \SNORM{\vshh}{1,\P} \simeq \SNORM{\vsh}{1,\P}.
  \end{align*}
\end{lemma}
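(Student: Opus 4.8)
The plan is to prove both inequalities hidden in $\simeq$ by characterising $\vsh$ and $\vshh$ as minimisers of the Dirichlet energy $\SNORM{\cdot}{1,\P}^2$ under moment constraints, and then comparing the two constrained problems. First I would record that the elliptic projection $\PinP{k}$ is computable from the moments \TERM{D1}--\TERM{D2} alone; since \eqref{eq:cond:v:vhat} states exactly that $\vsh$ and $\vshh$ carry the same such moments, this gives $\PinP{k}\vsh=\PinP{k}\vshh=:\qs^*\in\PS{k}(\P)$, and I set $\ws:=(I-\PinP{k})\vsh$. Next I would observe that $\vsh$ is the unique minimiser of $\SNORM{v}{1,\P}^2$ over the affine set of $v\in\HONE(\P)$ whose edge moments against $\PS{k-1}(\E)$ (for every $\E\in\EdgesP$) and whose volume moments against $\PS{k-2}(\P)$ equal those of $\vsh$, whereas $\vshh$ minimises the same energy over the \emph{smaller} affine set in which the volume moments are instead prescribed against all of $\PS{k}(\P)$. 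Both facts follow because the Euler--Lagrange stationarity conditions for these quadratic problems are precisely the membership conditions $\Delta v\in\PS{k-2}(\P)$, $\frac{\partial v}{\partial\nv}\in\PS{k-1}(\E)$ (resp.\ $\Delta v\in\PS{k}(\P)$) defining $\VhkP$ (resp.\ $\VhkenP$); convexity of the energy and affineness of the constraints turn feasibility plus stationarity into global minimality. Here I use that, by the enhancement condition together with $\PinP{k}\vshh=\qs^*$, the volume moments of $\vshh$ of degree $k-1$ and $k$ coincide with those of $\qs^*$.

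The lower bound is then immediate: the feasible set of $\vshh$ is contained in that of $\vsh$ (same \TERM{D1}--\TERM{D2} data, plus extra fixed moments), and $\vshh$ is itself feasible for $\vsh$'s problem, so $\SNORM{\vsh}{1,\P}\le\SNORM{\vshh}{1,\P}$.

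For the reverse bound I would exhibit a cheap competitor $v=\vsh+\lb$ in $\vshh$'s feasible set and invoke minimality, $\SNORM{\vshh}{1,\P}\le\SNORM{\vsh+\lb}{1,\P}\le\SNORM{\vsh}{1,\P}+\SNORM{\lb}{1,\P}$. The correction $\lb$ is built inside the ball $B$ of radius $\gamma_0\hP$ granted by \SRone\ and extended by zero, so it leaves every edge moment unchanged; on $B$ the map sending $\lb$ to its moments $\int_B \lb\,\mono$, $\ABS{\alpha}\le k$, is onto, so I can pick $\lb\in\HONEzr(B)$ with $\int_\P \lb\,\ss=0$ for $\ss\in\PS{k-2}(\P)$ and $\int_\P \lb\,\mono=-\int_\P \ws\,\mono$ for each $\mono\in\monomials{k}{\P}\setminus\monomials{k-2}{\P}$, which is exactly what renders $v$ feasible for $\vshh$'s problem. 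A scaling argument on $B$, together with $\NORM{\mono}{0,\P}\lesssim\hP$, the Poincar\'e inequality for $\ws$ (which has $\int_{\bP}\ws=0$), and the Pythagorean bound $\SNORM{\ws}{1,\P}\le\SNORM{\vsh}{1,\P}$, then yields
\begin{align*}
  \SNORM{\lb}{1,\P}\lesssim\hP^{-2}\max_{\mono}\Big|\int_\P \ws\,\mono\Big|\lesssim\hP^{-1}\NORM{\ws}{0,\P}\lesssim\SNORM{\ws}{1,\P}\le\SNORM{\vsh}{1,\P},
\end{align*}
with constants depending only on $\gamma_0$ and $k$. Combining the two bounds proves $\SNORM{\vshh}{1,\P}\simeq\SNORM{\vsh}{1,\P}$.

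The main obstacle, and the reason for routing the argument through an interior correction rather than a direct integration by parts on $\delta=\vsh-\vshh$, is the need to avoid estimating $\frac{\partial\delta}{\partial\nv}$ on the edges: a naive comparison produces edge trace terms whose bounds degenerate when $\P$ has many arbitrarily small edges, which is precisely the regime \SRone\ permits. Supporting $\lb$ in the central ball confines the whole analysis to the scale-robust interior moment problem and a single Poincar\'e inequality, both insensitive to edge lengths. The remaining points are routine and I would relegate them to computation: the stationarity/feasibility equivalence for the two minimisation problems, and the surjectivity and scaling of the interior moment map on $B$.
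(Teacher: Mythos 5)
Your proposal is correct, but it follows a genuinely different route from the paper's. The paper proves the bound $\SNORM{\vsh}{1,\P}\leq\SNORM{\vshh}{1,\P}$ by a direct integration by parts, using that $\Delta\vsh\in\PS{k-2}(\P)$ and $\nabla\vsh\cdot\nor_\P\in\PS{k-1}(\E)$ so that \eqref{eq:cond:v:vhat} lets one swap $\vsh$ for $\vshh$ in the resulting volume and boundary integrals; your observation that $\vsh$ is the energy minimiser over the larger constraint set and that $\vshh$ is feasible for that problem is a cleaner repackaging of the same fact (the minimisation characterisations you invoke are exactly the Euler--Lagrange reformulations of the space definitions, and the identity $\PinP{k}\vsh=\PinP{k}\vshh$ you need is indeed guaranteed by \eqref{eq:cond:v:vhat}, since $\PinP{k}$ is computable from \TERM{D1}--\TERM{D2}). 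The two arguments diverge in the reverse bound: the paper splits $\Delta(\vshh-\PinP{k}\vshh)=\Dlow(\vshh)+\Dhigh(\vshh)$, kills the high-order part via the enhancement orthogonality, swaps back to $\vsh-\PinP{k}\vsh$ in the remaining integrals, and closes with the inverse inequality of Lemma~\ref{lem:inverse} plus a Poincar\'e inequality; you instead build an explicit competitor $\vsh+\lb$ with $\lb$ a bubble correction supported in the interior ball granted by \SRone, chosen to match the missing degree-$(k-1,k)$ moments, and conclude by minimality of $\vshh$. Your scaling chain $\SNORM{\lb}{1,\P}\lesssim\hP^{-2}\max_\alpha\ABS{\int_\P\ws\,\mono}\lesssim\hP^{-1}\NORM{\ws}{0,\P}\lesssim\SNORM{\vsh}{1,\P}$ is sound (the moment map on the ball is onto and uniformly well conditioned when $\gamma_0$ is bounded below, $\NORM{\mono}{0,\P}\lesssim\hP$, and $\ws$ has zero boundary average so the Poincar\'e step applies). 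What your route buys is that it bypasses the inverse inequality for $\Delta$ on the enhanced space and replaces it with the interior moment construction; what it costs is the need to justify the two minimisation characterisations, including the point --- which you should make explicit rather than leave to "routine computation" --- that every function feasible for the smaller problem automatically has $\PinP{k}v=\qs^*$, so that feasibility really does encode the enhancement constraint and unisolvence identifies the minimiser with $\vshh$. One small remark: your stated motivation (that a direct comparison of $\vsh-\vshh$ would produce edge-trace estimates degenerating on small edges) does not actually apply to the paper's argument, which never estimates normal traces but only swaps them against test functions with equal edge moments; so the interior-bubble detour is a valid alternative rather than a necessity.
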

 Thanks to this lemma, we can limit our analysis to the ``plain''
 discretization defined by \eqref{eq:VhkP:def}.
The construction and the analysis of the new stabilization terms will
consist in several steps:
\begin{enumerate}[itemsep=2mm, topsep=2mm, label* = {\emph{Step \arabic*.}}]
\item We show that the nonconforming virtual
  element space $\VhkP$ and the subspace $\tVhk$
  of $(H^1(\P))'$ spanned by the functionals yielding the degrees of
  freedom \TERM{D1}--\TERM{D2} are in a stable duality relation,
  i.e., 
  they satisfy an inf-sup condition of the form
  \eqref{eq:abstractinfsup}.
  \item We next show that, if we restrict ourselves to a suitably chosen subspace $\ringVhkP$ of $\VhkP$, a similar stable duality relation holds with the subspace spanned by the functionals corresponding to the sole  boundary degrees of freedom \TERM{D1}, which is isomorphic to the subspace $\Nk{k-1}(\bP)\subset H^{-\frac12}(\bP)$ of piecewise polynomials on the boundary mesh $\EdgesP$.

\item As $\ker \PinP{k} \subseteq \ringVhkP$, putting ourselves in the framework of
  Section~\ref{sec:dual:scalar:product}, we can then transfer the problem
  of building the bilinear form $\sP$ defined on the space $\VhkP$, to
  whose elements we do not have direct access, to the problem of
  building a $H^{-\frac12}(\bP)$ semi-inner product on the space $\Nk{k-1}(\bP)$.

\item We finally show that, on $\Nk{k-1}(\bP)$, the $H^{-\frac12}(\bP)$ semi inner product can be split
  as the sum of a global contribution acting on piecewise constants,
  and local contributions acting on average-free polynomials of degree
  $k-1$ on each edge.
 We postpone the treatment of the former to the
    next section and, for the latter, we prove that a suitably scaled
    $L^2$ inner product yields optimal estimates.  
\end{enumerate}
\subsubsection*{Mesh assumptions}
Before going into the details of the construction of the stabilization
term, we  present the precise assumptions
on the polygonal tessellations $\Th$.
As already stated, we assume that \TERM{G1}~is always satisfied.
First, we observe that we can write the
stabilization proposed in~\cite{AyusodeDios-Lipnikov-Manzini:2016} as
\begin{align*}
  \sP(\us,\vs) = \vv^T\uv,
\end{align*}
where $\uv$ and $\vv$ are the vectors collecting the degrees of
freedom \TERM{D1}--\TERM{D2} of the virtual element functions $\us$ and
$\vs$.
This bilinear form satisfies \eqref{eq:requirement:S}, provided that the family of polygonal meshes $\Th$
 satisfies the following additional shape regularity assumption:
  
\begin{itemize}[parsep=1mm,topsep=2mm]
\item[\SRtwo] \label{eq:SR.ii} there exist a positive constant
  $\gamma_1$ such that for all $\Th$, the distance between any two
  vertices of every element $\P\in\Th$ is greater than $\gamma_1\hP$.
\end{itemize}
  
\medskip
Assumption \SRtwo{} implies 
 that the
size of adjacent edges are comparable.
  It also implies that the number of edges in the boundary of a
  polygonal element is uniformly bounded from above and the minimum
  edge length cannot decrease faster than the mesh size $\hh$ during
  the refinement process for $\hh\to0$.
  So, mesh families where the number of edges can become arbitrarily
  high as $\hh\to0$ are not admissible.
Such an assumption is quite strong, and, to allow more freedom in the
choice of the mesh, weaker alternatives have been considered in the
literature.
  Assumption \SRtwo~ can be replaced by either one of assumptions
  \SRtwop~ and \SRtwopp~below.
  The former assumption allows elements to have a very large number of
  very small edges; the latter one to have very small edges 
  adjacent to large edges.
\begin{itemize}[parsep=1mm,topsep=2mm]
\item[\SRtwop] There exists a real positive constant $\gb$ such that
  for all meshes $\Th$ and every pair of adjacent
    edges $\E,\Ep \in \EdgesP$, $\P \in \Th$, it holds that
  \begin{align*}
    \frac{1}{\gb}\leq\frac{\hE}{\hEp}\leq\gb.
  \end{align*}
\item[\SRtwopp] There exists an integer positive constant $\NSs$ such
  that 
    for all meshes $\Th$, every $\P\in\Th$
  has at most $\NSs$ edges.
\end{itemize}

  To allow the meshes a greater flexibility, we combine $\SRone$ with
  the following assumption, which essentially requires
  that, for $\P \in \Th$,  a part of $\bP$ satisfies $\SRtwop$ and the remaining part
  satisfies $\SRtwopp$.
\begin{itemize}[parsep=1mm,topsep=2mm]
\item[\SRthree] There exist two constants $\gb > 0$ and $\NSs \in
  \mathbb{N}$ such that for all $\Th$, the edge set $\EdgesP$ of every
  polygon $\P\in\Th$ can be split as $\EdgesP=\EdgesPp\cup\EdgesPpp$,
  where $\EdgesPp$ and $\EdgesPpp$ are such that
  \begin{itemize}[itemsep=2mm]
  \item[\SRthreea] \label{eq:thmass1} the inequality
    \begin{align*}
      \frac{1}{\gb}\leq\frac{\hE}{\hEp}\leq\gb
    \end{align*}
    holds for any pair of adjacent edges  $\E,\Ep\in\EdgesP$ with
    $\E\in\EdgesPp$;
  \item[\SRthreeb]  $\EdgesPpp$ contains at most $\NSs$ edges.
  \end{itemize}
\end{itemize}
  Assumption $\SRthree$ allows for situations where a large number of
  small edges coexists with some large edges.
  We can think of families of meshes for which such an assumption is
  not satisfied, but they would be extremely pathological.

\subsubsection*{Step 1. Degrees of freedom: definition and stable duality}
Let $\P \in \Th$. We devote this section to verifying that the local
 nonconforming virtual element
space $\VhkP \subset \HONE(\P)$, defined by \eqref{eq:VhkP:def}, and
the space $\tVhk(\P)$ spanned in $(\HONE(\P))'$ by the functionals
yielding the degrees of freedom \TERM{D1}--\TERM{D2} fall into the
framework considered in Section \ref{sec:dual:scalar:product}.
To this aim we introduce the space of discontinuous piecewise
polynomials of degree $k-1$ that are defined on the elemental boundary
$\bP$,
\begin{align*}
  \Nk{k-1}(\bP) =
  \Big\{
  \lambda\in\LTWO(\bP):
  \restrict{\lambda}{\E}\in\PS{k-1}(\E),\,\forall\E\in\EdgesP \Big\}
  \subset\HS{-\frac12}(\bP),
\end{align*}
and we let $\tVhk(\P)\subset (\HONE(\P))'$ be defined as
\begin{align*}
  \tVhk(\P)=\gKstar\Nk{k-1}(\bP)\oplus\PS{k-2}(\P)\subset(\HONE(\P))',
\end{align*}
where $\gKstar:\HS{-\frac12}(\bP)\to(\HONE(\P))'$ is the adjoint of
the trace operator $\gK:\HONE(\P)\to\HS{\frac12}(\bP)$: for all
$\xi\in\HS{-\frac{1}{2}}(\bP)$
\begin{align}\langle \gKstar\xi, \vs \rangle = \langle \xi , \gK\vs\rangle, \qquad\forall\vs\in\HONE(\P).
  \label{eq:adjoint:trace:operator}
\end{align}
In fact, for any given virtual
elemental function $\vsh \in \VhkP$, the degrees of freedom \TERM{D1}--\TERM{D2}
of $\vsh$  stem from the
action of a basis of $\tVhk(\P)$.

\medskip
Let $\HONEzrbr$ denote the subspace of functions in $\HONE(\P)$ whose
integral on the polygonal boundary $\bP$ is zero:
\begin{align*}
  \HONEzrbr
  = \bigg\{
  \vs\in\HONE(\P)\,:\,\int_{\bP}\vs\dS=0,
  \bigg\}
\end{align*}
where,
  for the sake of notational simplicity and with some abuse of
  notation, we let the same symbol $\vs$ denote  both a function $\vs \in \HONE(\P)$ and 
its trace on $\bP$.
By duality with such a subspace of $\HONE(\P)$, we define the dual
seminorm $\SNORM{\,\cdot\,}{-1,\P}:(\HONE(\P))'\to\REAL^+$  as
\begin{align*}
  \forall\eta\in(\HONE(\P))':\quad
  \SNORM{\eta}{-1,\P}
  = \sup_{\vs\in\HONEzrbr}
  \frac{\bil{\eta}{\vs}}{\SNORM{\vs}{1,\P}}.
\end{align*}
We can prove the following proposition, which is a
  stronger version of the unisolvency property for the degrees of
  freedom.
  In fact, not only it implies  unisolvency,
but also that the space spanned by the functionals yielding the
degrees of freedom provides control, uniformly in $\hh$, on the $\HONE$ norm of the virtual
element functions.
Let
\begin{align*}
  \fint_{\bP}\vs = \frac{1}{\mbP}\int_{\bP}\vs\dS
  \end{align*}
denote the average of $\vs$ on $\bP$.
\begin{lemma}\label{lem:unisolvency}
  For all $\vs\in\VhkP$ it holds that
  \begin{align*}
    \sup_{\eta\in\tVhk(\P)} 
    \frac{\bil{\eta}{\vs}}{\sqrt{ \ABS{\bil{\eta}{1}}^2 + \SNORM{\eta}{-1,\P}^2}}
    \geq
    \sqrt{ \ABS{\fint_{\bP}\vs}^2 + \SNORM{\vs}{1,\P}^2 }.
  \end{align*}
\end{lemma}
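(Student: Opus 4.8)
The plan is to prove this inf-sup inequality \emph{constructively}: for a fixed $\vs\in\VhkP$ I will exhibit a single functional $\eta\in\tVhk(\P)$ for which the ratio on the left-hand side already reaches the target quantity on the right, so that the supremum is at least as large. The construction exploits the \emph{self-dual} structure of $\VhkP$. By the defining properties in \eqref{eq:VhkP:def}, the normal derivative $\partial\vs/\partial\nv$ restricts to a polynomial of degree $k-1$ on each edge, hence belongs to $\Nk{k-1}(\bP)$, while $\Delta\vs\in\PS{k-2}(\P)$. Therefore
\[
  \eta^* := \gKstar\Big(\frac{\partial\vs}{\partial\nv}\Big) - \Delta\vs
\]
lies automatically in $\tVhk(\P)=\gKstar\Nk{k-1}(\bP)\oplus\PS{k-2}(\P)$, with no regularity or density argument needed.

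The heart of the matter is Green's identity. Pairing $\eta^*$ with an arbitrary $w\in\HONE(\P)$ and integrating by parts gives $\bil{\eta^*}{w}=\int_{\bP}(\partial\vs/\partial\nv)\,w-\int_\P\Delta\vs\,w=\int_\P\nabla\vs\cdot\nabla w$. Taking $w=\vs$ yields $\bil{\eta^*}{\vs}=\SNORM{\vs}{1,\P}^2$; taking $w=1$ and using $\int_\P\Delta\vs=\int_{\bP}\partial\vs/\partial\nv$ (divergence theorem) yields $\bil{\eta^*}{1}=0$; and Cauchy--Schwarz gives $\bil{\eta^*}{w}\le\SNORM{\vs}{1,\P}\,\SNORM{w}{1,\P}$, whence $\SNORM{\eta^*}{-1,\P}\le\SNORM{\vs}{1,\P}$. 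Thus $\eta^*$ reproduces the seminorm part of the target while contributing nothing to the pairing against constants.

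To recover the boundary-average part I add the rank-one functional $\eta_0:=\gKstar(1/\mbP)$, which lies in $\gKstar\Nk{k-1}(\bP)\subset\tVhk(\P)$ since constants belong to $\Nk{k-1}(\bP)$ for $k\geq1$, and whose pairing is $\bil{\eta_0}{w}=\fint_{\bP}w$. Consequently $\bil{\eta_0}{\vs}=\fint_{\bP}\vs$ and $\bil{\eta_0}{1}=1$, while $\SNORM{\eta_0}{-1,\P}=0$ because every $w\in\HONEzrbr$ has vanishing boundary average. I then set $\eta:=(\fint_{\bP}\vs)\,\eta_0+\eta^*\in\tVhk(\P)$ and compute the three needed quantities: $\bil{\eta}{\vs}=(\fint_{\bP}\vs)^2+\SNORM{\vs}{1,\P}^2$, $\bil{\eta}{1}=\fint_{\bP}\vs$, and (since $\eta_0$ drops out of the dual seminorm on $\HONEzrbr$) $\SNORM{\eta}{-1,\P}=\SNORM{\eta^*}{-1,\P}\le\SNORM{\vs}{1,\P}$. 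Hence the squared denominator obeys $\ABS{\bil{\eta}{1}}^2+\SNORM{\eta}{-1,\P}^2\le(\fint_{\bP}\vs)^2+\SNORM{\vs}{1,\P}^2$, exactly the square of the right-hand side, whereas the numerator equals that same quantity; dividing gives the claimed bound with constant $1$ (the degenerate case $\vs=0$, where both sides vanish, being trivial).

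The only genuinely delicate points are bookkeeping rather than analysis. The first is verifying that $\partial\vs/\partial\nv$ and $-\Delta\vs$ land \emph{exactly} in the spaces $\Nk{k-1}(\bP)$ and $\PS{k-2}(\P)$ generating $\tVhk(\P)$, which is precisely where the defining constraints of the nonconforming space \eqref{eq:VhkP:def} are used in an essential way. The second is keeping track of the orthogonal splitting whereby $\eta_0$ feeds only the $\bil{\cdot}{1}$ term and $\eta^*$ only the dual-seminorm term of the denominator; it is exactly this decoupling that makes the sharp constant $1$ attainable, since it turns the denominator bound into the same expression as the numerator. A coarser choice of test functional would still produce an inf-sup constant, but would sacrifice this clean equality.
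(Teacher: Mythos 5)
Your proof is correct and is essentially the paper's own argument: the functional $\eta=(\fint_{\bP}\vs)\,\eta_0+\eta^*$ you construct coincides with the paper's choice $\eta_{\vs}=\gKstar\nabla\vs\cdot\nor_\P-\Delta\vs+\frac{1}{\mbP}\gKstar\fint_{\bP}\vs$, and the Green's-identity computation of $\bil{\eta}{\ws}$, the evaluation at $\ws=\vs$ and $\ws=1$, and the bound on the dual seminorm over $\HONEzrbr$ are the same steps. The only cosmetic difference is that the paper obtains $\SNORM{\eta_{\vs}}{-1,\P}=\SNORM{\vs}{1,\P}$ as an equality while you settle for the inequality $\le$, which is all the lower bound on the supremum requires.
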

\begin{proof}
  Let $\vs\in\VhkP$ and take $\eta_{\vs}\in\tVhk(\P)$ given by
  \begin{align*}
    \eta_{\vs} = 
    \gKstar\nabla\vs\cdot\nor_\P - \Delta\vs + \frac{1}{\mbP}\gKstar\fint_{\bP}\vs.
  \end{align*}
  According to~\eqref{eq:adjoint:trace:operator} and to the
  definition\ of $\eta_{\vs}$ given above, for every function
  $\ws\in\HONE(\P)$ we find that
  \begin{align}
    \bil{\eta_{\vs}}{\ws}
    &= \int_{\bP}\ws\nabla\vs\cdot\nor_\P\dS - \int_{\P}\ws\Delta\vs\dV + \left(\frac{1}{\mbP}\int_{\bP}\ws\dS\right)\,\left(\fint_{\bP}\vs\dS\right)\nonumber\\[0.5em]
    &= \int_{\bP}\nabla\vs\cdot\nabla\ws\dS + \left(\fint_{\bP}\ws\dS\right)\,\left(\fint_{\bP}\vs\dS\right),
    \label{eq:lemma:unisolvence:00}
  \end{align}
  which, for $w \in \HONEzrbr$, reduces to 
  \begin{align*}
     \bil{\eta_{\vs}}{\ws} = \int_{\bP}\nabla\vs\cdot\nabla\ws\dS.
  \end{align*}
   Then
  \begin{align}
    \SNORM{\eta_{\vs}}{-1,\P}
    = \sup_{\ws\in\HONEzrbr}\frac{\bil{\eta_{\vs}}{\ws}}{\SNORM{\ws}{1,\P}}
    = \sup_{\ws\in\HONEzrbr}\frac{\int_{\P}\nabla\vs\cdot\nabla\ws\dV}{\SNORM{\ws}{1,\P}}
    = \SNORM{\vs}{1,\P}.
    \label{eq:lemma:unisolvence:10}
  \end{align}
  Moreover, taking $\ws=1$ in~\eqref{eq:lemma:unisolvence:00} yields
  \begin{align}
    \bil{\eta_{\vs}}{1}
    = \fint_{\bP}\vs\dS.
    \label{eq:lemma:unisolvence:20}
  \end{align}
  Adding the square of~\eqref{eq:lemma:unisolvence:10}
  and~\eqref{eq:lemma:unisolvence:20} yields
  \begin{align}
    \ABS{\bil{\eta_{\vs}}{1}}^2 + \SNORM{\eta_{\vs}}{-1,\P}^2 =
    \ABS{\fint_{\bP}\vs\dS}^2 + \SNORM{\vs}{1,\P}^2,
    \label{eq:lemma:unisolvence:30}
  \end{align}
  and taking $\ws=\vs$ in~\eqref{eq:lemma:unisolvence:00} gives us the
  identity
  \begin{align*}
    \bil{\eta_{\vs}}{\vs}
    = \int_\P\ABS{\nabla\vs}^2\dV + \ABS{\fint_{\bP}\vs\dS}^2.
  \end{align*}
  Finally, we combine this identity
  with~\eqref{eq:lemma:unisolvence:30}
  to obtain
  \begin{align*}
    \sqrt{ \ABS{\fint_{\bP}\vs\dS}^2 + \SNORM{\vs}{1,\P}^2 }
    = \frac{\bil{\eta_{\vs}}{\vs}}{\sqrt{\ABS{\bil{\eta_{\vs}}{1}}^2 + \SNORM{\eta_{\vs}}{-1,\P}^2}}
    \leq 
    \sup_{\eta\in\tVhk(\P)}\frac{\bil{\eta}{\vs}}{\sqrt{\ABS{\bil{\eta}{1}}^2 + \SNORM{\eta}{-1,\P}^2}},
  \end{align*}
  which holds for every $\vs\in\Vhk(\P)$ and is the assertion of the
  lemma.
\end{proof}

\subsubsection*{Step 2. Reduction to the boundary}
If we restrict ourselves to a suitable subspace $\ringVhkP$ of $\VhkP$,
we have a stable duality result with the space spanned by the
functionals yielding the boundary degrees of freedom \TERM{D1}.
More precisely, consider 
the space of harmonic polynomials of degree at most $k$,
\begin{align*}
  \PolyHarm_k(\P) =
  \Big\{\,\qs\in\PS{k}(\P):\Delta\qs=0\,\Big\}\subset\PS{k}(\P),
\end{align*}
and the space of polynomials of degree at most $k$ orthogonal to all
polynomials in $\PolyHarm_k(\P)$,
\begin{align*}
  \Polyort_k(\P) = \bigg\{\,\ps\in\PS{k}(\P):
  \int_{\P}\ps\qs\dV = 0\,\,\forall\qs\in\PolyHarm_k(\P)\bigg\}
  \subset\PS{k}(\P).
\end{align*}
Let
\begin{align}
  \ringVhkP = \left\{\vs\in\VhkP:\,\int_{\P}\nabla\vs\cdot\nabla\qs\dV=0,\,\,
  \forall\qs\in\Polyort_k(\P)\right\}.
  \label{eq:poliort}
\end{align}
Remark that $\VhkP \cap \ker\PinP{k} \subset \ringVhkP$.
We have the following lemma.
\begin{lemma}\label{lem:4.1}
 For all $\vs\in\ringVhkP$ we have
   \begin{align*}
   \sup_{\eta\in\Nk{k-1}(\bP)} 
   \frac{\bil{\gKstar\eta}{\vs}}{\sqrt{\ABS{\bil{\gKstar\eta}{1}}^2 + \SNORM{\gKstar\eta}{-1,\P}^2}}
   \gtrsim
   \sqrt{ \ABS{\fint_{\bP}\vs}^2 + \SNORM{\vs}{1,\P}^2 }.
   \end{align*}
\end{lemma}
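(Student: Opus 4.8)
The plan is to build, for each $\vs\in\ringVhkP$, an explicit boundary functional $\eta\in\Nk{k-1}(\bP)$ that nearly realizes the supremum, mimicking the functional $\eta_\vs$ from the proof of Lemma~\ref{lem:unisolvency} but trading its interior component for a boundary one. The device is a harmonic splitting. Since $\dims\Polyort_k(\P)=\dims\PS{k-2}(\P)$ and $\Polyort_k(\P)\cap\PolyHarm_k(\P)=\{0\}$, the Laplacian restricts to an isomorphism $\Delta:\Polyort_k(\P)\to\PS{k-2}(\P)$; hence there is a unique $\psi\in\Polyort_k(\P)$ with $\Delta\psi=\Delta\vs$. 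Setting $\phi:=\vs-\psi$, which is harmonic, I would define
\begin{align*}
  \eta := \nabla\phi\cdot\norP + \frac{1}{\mbP}\fint_{\bP}\vs.
\end{align*}
Because $\nabla\phi\cdot\norP=\nabla\vs\cdot\norP-\nabla\psi\cdot\norP$ is edgewise a polynomial of degree $k-1$, indeed $\eta\in\Nk{k-1}(\bP)$.

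Next I would evaluate the three quantities entering the quotient. Using that $\phi$ is harmonic, so that $\int_{\bP}(\nabla\phi\cdot\norP)\,w=\int_\P\nabla\phi\cdot\nabla w$ for every $w\in\HONE(\P)$, together with the defining orthogonality $\int_\P\nabla\vs\cdot\nabla\psi=0$ of $\ringVhkP$ (valid since $\psi\in\Polyort_k(\P)$), one obtains
\begin{align*}
  \bil{\gKstar\eta}{\vs} = \SNORM{\vs}{1,\P}^2 + \Abs{\fint_{\bP}\vs}^2,
  \qquad
  \bil{\gKstar\eta}{1} = \fint_{\bP}\vs.
\end{align*}
For the dual seminorm I would test against $w\in\HONEzrbr$: the constant part of $\eta$ drops since $\int_{\bP}w=0$, leaving $\bil{\gKstar\eta}{w}=\int_\P\nabla\phi\cdot\nabla w\le\SNORM{\phi}{1,\P}\SNORM{w}{1,\P}$, whence $\SNORM{\gKstar\eta}{-1,\P}\le\SNORM{\phi}{1,\P}$. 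The same orthogonality yields the Pythagorean identity $\SNORM{\phi}{1,\P}^2=\SNORM{\vs}{1,\P}^2+\SNORM{\psi}{1,\P}^2$.

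The heart of the argument, and the step I expect to be the main obstacle, is then the bound $\SNORM{\psi}{1,\P}\lesssim\SNORM{\vs}{1,\P}$, which controls the denominator. I would obtain it in three steps, all robust under the sole assumption \SRone{}: (i) a polynomial norm equivalence on $\Polyort_k(\P)$, namely $\SNORM{\psi}{1,\P}\simeq\hP\NORM{\Delta\psi}{0,\P}=\hP\NORM{\Delta\vs}{0,\P}$, which follows by a scaling argument on the element, star-shaped with respect to a ball of radius $\simeq\hP$ by \SRone{}; (ii) the bound $\hP\NORM{\Delta\vs}{0,\P}\lesssim\sup_{z\in\HONEzr(\P)}\int_\P(\Delta\vs)\,z/\SNORM{z}{1,\P}$, proved by inserting an interior bubble supported in the inscribed ball and correlated with the polynomial $\Delta\vs$; and (iii) the elementary identity $\int_\P(\Delta\vs)\,z=-\int_\P\nabla\vs\cdot\nabla z$ for $z\in\HONEzr(\P)$, which gives $\sup_z\int_\P(\Delta\vs)\,z/\SNORM{z}{1,\P}\le\SNORM{\vs}{1,\P}$. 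Chaining (i)--(iii) produces $\SNORM{\psi}{1,\P}\lesssim\SNORM{\vs}{1,\P}$, hence $\SNORM{\gKstar\eta}{-1,\P}\le\SNORM{\phi}{1,\P}\lesssim\SNORM{\vs}{1,\P}$.

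Assembling these, the denominator satisfies $\sqrt{\Abs{\bil{\gKstar\eta}{1}}^2+\SNORM{\gKstar\eta}{-1,\P}^2}\lesssim\sqrt{\Abs{\fint_{\bP}\vs}^2+\SNORM{\vs}{1,\P}^2}$, while the numerator equals $\SNORM{\vs}{1,\P}^2+\Abs{\fint_{\bP}\vs}^2$; dividing, the quotient for this particular $\eta$ is $\gtrsim\sqrt{\Abs{\fint_{\bP}\vs}^2+\SNORM{\vs}{1,\P}^2}$, which is the claim. The decisive point is the harmonic splitting, which decouples the boundary duality from the interior degrees of freedom, and I would emphasize that steps (i)--(iii) deliberately replace the trace inverse estimate — which would deteriorate for elements with arbitrarily small edges — by interior, volume-based polynomial estimates that stay uniform under \SRone{} alone.
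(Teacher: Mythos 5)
Your proof is correct and follows essentially the same route as the paper: you test with the same functional $\eta_\vs=\nabla(\vs-\qst)\cdot\norP+\mbP^{-1}\fint_{\bP}\vs$, where $\qst\in\Polyort_k(\P)$ solves $\Delta\qst=\Delta\vs$, and you exploit the orthogonality defining $\ringVhkP$ in exactly the same way. The only differences are cosmetic: the paper invokes Lemmas~\ref{lem:inverse} and~\ref{lem:pitilde} (imported from the conforming VEM analysis) where you re-derive the bound $\SNORM{\qst}{1,\P}\lesssim\hP\NORM{\Delta\vs}{0,\P}\lesssim\SNORM{\vs}{1,\P}$ via a bubble-function argument, and you use a Pythagorean identity where the paper uses the triangle inequality.
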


  To prove Lemma~\ref{lem:4.1}, we need two technical lemmas, which
  have been proven in~\cite{BeiraodaVeiga-Lovadina-Russo:2017} for the conforming virtual
  element method and are also true in the nonconforming case.
As the proof is the same, we omit it.
\begin{lemma}
  \label{lem:inverse}
 The following inverse
  inequality holds for all $\vs\in\VhkP$:
  \begin{align*}
    \NORM{ \Delta\vs }{0,\P} \lesssim \hP^{-1} \SNORM{\vs}{1,\P}.
  \end{align*}
\end{lemma}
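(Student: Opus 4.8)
The plan is to exploit that, by the very definition of $\VhkP$ in \eqref{eq:VhkP:def}, the Laplacian $\qs:=\Delta\vs$ already belongs to $\PS{k-2}(\P)$, so that proving the asserted inverse inequality is the same as controlling $\NORM{\qs}{0,\P}$ by $\hP^{-1}\SNORM{\vs}{1,\P}$. The naive route, writing $\NORM{\qs}{0,\P}^2=\int_\P\qs\,\Delta\vs\dV$ and integrating by parts, produces the boundary term $\int_{\bP}\qs\,(\partial\vs/\partial\nv)$, whose control would require a trace estimate on the merely $\HS{-\frac12}$-regular normal derivative. I would therefore avoid this term altogether by testing $\qs$ against a function that vanishes on $\bP$.

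First I would introduce an interior bubble function $\bs$ on $\P$. Using only the star-shapedness assumption \SRone, one constructs $\bs\in\HONEzr(\P)$ with $0\le\bs\le1$ enjoying the two scaling properties, valid for every $\qs\in\PS{k-2}(\P)$ with constants depending only on $k$ and $\gamma_0$:
\[
\NORM{\qs}{0,\P}^2 \lesssim \int_\P \bs\,\qs^2\dV,
\qquad
\SNORM{\bs\,\qs}{1,\P}\lesssim \hP^{-1}\NORM{\qs}{0,\P}.
\]
The first is the familiar norm equivalence on the finite-dimensional space $\PS{k-2}(\P)$ weighted by the bubble, and the second is an inverse inequality (using $\NORM{\bs\,\qs}{0,\P}\le\NORM{\qs}{0,\P}$); both follow from a scaling argument on the ball of radius $\gtrsim\gamma_0\hP$ with respect to which $\P$ is star-shaped. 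Concretely, $\bs$ may be assembled from standard triangle bubbles over a shape-regular sub-triangulation of $\P$ anchored at the center of that ball.

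I would then set $\ws:=\bs\,\qs\in\HONEzr(\P)$ and combine the two displayed properties with Green's formula: since $\ws$ vanishes on $\bP$, the boundary contribution drops and
\[
\NORM{\qs}{0,\P}^2 \lesssim \int_\P \bs\,\qs^2\dV = \int_\P \ws\,\qs\dV = \int_\P \ws\,\Delta\vs\dV = -\int_\P\nabla\vs\cdot\nabla\ws\dV \le \SNORM{\vs}{1,\P}\,\SNORM{\ws}{1,\P}\lesssim \hP^{-1}\SNORM{\vs}{1,\P}\,\NORM{\qs}{0,\P}.
\]
Dividing through by $\NORM{\qs}{0,\P}=\NORM{\Delta\vs}{0,\P}$ gives exactly $\NORM{\Delta\vs}{0,\P}\lesssim\hP^{-1}\SNORM{\vs}{1,\P}$. (For $k=1$ one has $\PS{-1}=\{0\}$, so $\qs\equiv0$ and the bound is trivial.)

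The main obstacle is the construction of the bubble $\bs$ with the two quoted properties holding \emph{uniformly} over the whole mesh family: this is precisely the point where \SRone{} enters, and one must verify that the equivalence and inverse constants depend only on the polynomial degree $k$ and on the star-shapedness constant $\gamma_0$, and not on $\hP$ nor on the (possibly very large) number or (possibly very small) size of the edges of $\P$. Once this uniformity is granted, the remainder is the elementary chain above; since the bubble construction is standard (cf.\ \cite{BeiraodaVeiga-Lovadina-Russo:2017}), the detailed proof can be safely omitted.
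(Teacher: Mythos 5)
Your argument is correct and is essentially the proof the paper relies on: the paper omits the proof of Lemma~\ref{lem:inverse} and defers to \cite{BeiraodaVeiga-Lovadina-Russo:2017}, where exactly this bubble-function device is used — test $\qs=\Delta\vs\in\PS{k-2}(\P)$ against $\bs\qs$ with $\bs\in\HONEzr(\P)$, use the weighted norm equivalence and the inverse bound $\SNORM{\bs\qs}{1,\P}\lesssim\hP^{-1}\NORM{\qs}{0,\P}$, and integrate by parts with no boundary term. The only caution is in your concrete realization of $\bs$: a fan sub-triangulation from the star center need not be shape-regular when $\P$ has arbitrarily small edges, so one should take $\bs$ supported on the ball of radius $\gtrsim\gamma_0\hP$ guaranteed by \SRone{} (for which both displayed properties follow by scaling, with constants depending only on $k$ and $\gamma_0$), rather than on a sub-triangulation of all of $\P$.
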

\begin{lemma}\label{lem:pitilde}
 For all $\vs\in\VhkP$ there
  exists a polynomial function $\qst\in\Polyort_{k}(\P)$ such that
  \begin{align*}
    \Delta\qst = \Delta\vs
    \qquad{and}\qquad
    \SNORM{\qst}{1,\P} \lesssim \hP\NORM{\Delta\vs}{0,\P}.
  \end{align*}
\end{lemma}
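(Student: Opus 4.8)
The plan is to separate the statement into two independent tasks: first the purely algebraic existence and uniqueness of $\qst\in\Polyort_k(\P)$ solving $\Delta\qst=\Delta\vs$, and then the seminorm bound, which I would obtain by a scaling argument reducing the inequality to a shape-uniform norm equivalence on a reference element.

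For existence, note that by the definition of $\VhkP$ in \eqref{eq:VhkP:def} we have $\Delta\vs\in\PS{k-2}(\P)$, so it suffices to show that $\Delta$ restricts to an isomorphism $\Delta:\Polyort_k(\P)\to\PS{k-2}(\P)$. The kernel of $\Delta$ on $\PS{k}(\P)$ is exactly $\PolyHarm_k(\P)$, and in two dimensions a standard count (harmonic homogeneous polynomials of degree $j\geq1$ span a two-dimensional space, being real and imaginary parts of $(\xs+\is\ys)^j$, plus the constants) gives $\dimension\PolyHarm_k(\P)=2k+1$. Hence $\dimension\PS{k}(\P)-\dimension\PolyHarm_k(\P)=(k+1)(k+2)/2-(2k+1)=k(k-1)/2=\dimension\PS{k-2}(\P)$, so $\Delta:\PS{k}(\P)\to\PS{k-2}(\P)$ is onto. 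Since $\Polyort_k(\P)$ is the $\LTWO(\P)$-orthogonal complement of $\PolyHarm_k(\P)=\ker\Delta$, we have $\Polyort_k(\P)\cap\ker\Delta=\{0\}$ and $\dimension\Polyort_k(\P)=\dimension\PS{k-2}(\P)$; thus $\Delta|_{\Polyort_k(\P)}$ is injective, hence a bijection onto $\PS{k-2}(\P)$, producing a unique $\qst\in\Polyort_k(\P)$ with $\Delta\qst=\Delta\vs$.

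For the bound I would rescale via $\xv=\xvP+\hP\hat\xv$, mapping $\P$ to a reference element $\hat\P$ of unit diameter and setting $\hat q(\hat\xv)=\qst(\xvP+\hP\hat\xv)$. A direct change of variables shows that in two dimensions the $\HONE$-seminorm is scale invariant, $\SNORM{\qst}{1,\P}=\SNORM{\hat q}{1,\hat\P}$, while $\NORM{\Delta\hat q}{0,\hat\P}=\hP\NORM{\Delta\qst}{0,\P}$; moreover harmonicity and $\LTWO$-orthogonality over the domain are preserved by the affine map, so $\hat q\in\Polyort_k(\hat\P)$. The claimed inequality is therefore equivalent to the scale-free estimate $\SNORM{\hat q}{1,\hat\P}\lesssim\NORM{\Delta\hat q}{0,\hat\P}$ for $\hat q\in\Polyort_k(\hat\P)$. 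On a fixed $\hat\P$ this is merely a norm equivalence on the finite-dimensional space $\Polyort_k(\hat\P)$: the seminorm $\SNORM{\cdot}{1,\hat\P}$ is in fact a norm there, since the only constant in $\Polyort_k(\hat\P)$ is $0$ (constants are harmonic, hence lie in $\PolyHarm_k$), and $\NORM{\Delta\cdot}{0,\hat\P}$ is a norm because $\Delta$ is injective on $\Polyort_k(\hat\P)$.

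The crux, and the step I expect to be the main obstacle, is making the implicit constant uniform over the admissible family, i.e. independent of the individual element shape. This is where \SRone enters: after rescaling, every $\hat\P$ has unit diameter and is star-shaped with respect to a ball of radius at least $\gamma_0$, so it contains a fixed-size ball and sits inside a ball of radius one. I would obtain uniformity by a compactness argument over this family of star-shaped domains, parametrizing each by its (uniformly bounded) radial function about the star center and using that the norms involved are continuous functionals of the domain. The delicate point is that the subspace $\Polyort_k(\hat\P)$ itself moves with $\hat\P$, so one must control the $\hat\P$-dependence of the $\LTWO(\hat\P)$-orthogonal projection onto $\PolyHarm_k(\hat\P)$ uniformly; this is precisely the technical content established in \cite{BeiraodaVeiga-Lovadina-Russo:2017} for the conforming case, whose argument carries over verbatim here since it concerns only polynomial spaces on the element. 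Combining the resulting uniform reference estimate with the scaling identities yields $\SNORM{\qst}{1,\P}\lesssim\hP\NORM{\Delta\vs}{0,\P}$.
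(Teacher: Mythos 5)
Your argument is correct, but note that the paper does not actually prove this lemma at all: it states that the proof is identical to the conforming case and defers entirely to \cite{BeiraodaVeiga-Lovadina-Russo:2017}, so what you have produced is a self-contained reconstruction rather than a variant of an in-paper argument. Your existence step is sound and matches the paper's own algebra (the splitting $\PS{k}(\P)=\PolyHarm_k(\P)\oplus\ABS{\xv}^2\PS{k-2}(\P)$ quoted in Step~2 gives exactly your count $\dimension\Polyort_k(\P)=\dimension\PS{k-2}(\P)$, and since $\KER\Delta\cap\PS{k}(\P)=\PolyHarm_k(\P)$ meets $\Polyort_k(\P)$ trivially, $\Delta$ is a bijection from $\Polyort_k(\P)$ onto $\PS{k-2}(\P)$; the hypothesis $\vs\in\VhkP$ enters only through $\Delta\vs\in\PS{k-2}(\P)$, which is also why the conforming proof transfers verbatim). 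Your scaling identities are right in two dimensions. Where you diverge from the standard treatment is the uniformity step: you propose compactness over the class of unit-diameter domains star-shaped with respect to a ball of radius $\gamma_0$. This can be made rigorous (radial functions are uniformly Lipschitz, hence the class is compact, and the optimal constant is a continuous, positive function of the domain because the Gram matrices of the monomial basis are uniformly nondegenerate when $\hat\P$ contains a fixed ball), but it is nonquantitative. The usual quantitative alternative avoids compactness altogether: sandwich $B(\xv_0,\gamma_0\hP)\subset\P\subset B(\xv_0,\hP)$, use the uniform equivalence of $\LTWO$ norms of degree-$k$ polynomials on comparable balls, solve $\Delta\qs_B=\Delta\vs$ with $\qs_B\in\Polyort_k(B)$ on the inscribed ball (a fixed-shape problem with constants depending only on $k$), and correct by the $\LTWO(\P)$-projection onto $\PolyHarm_k(\P)$, controlling the harmonic correction by a polynomial inverse estimate on balls; this yields constants depending only on $k$ and $\gamma_0$, consistent with \SRone. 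One small caveat: your claim that the uniform control of the moving projection is ``precisely the technical content'' of \cite{BeiraodaVeiga-Lovadina-Russo:2017} is loose --- that reference proves the lemma itself (their stability analysis), not a domain-continuity statement about projections --- but since the paper under review cites it wholesale for exactly this lemma, your delegation of the crux is no weaker than what the authors themselves do.
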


We can now prove Lemma~\ref{lem:4.1}.

\medskip
\noindent
\textit{Proof of Lemma~\ref{lem:4.1}.}  Consider a function
$\vs\in\ringVhkP$.
Thanks to Lemmas \ref{lem:pitilde} and \ref{lem:inverse}, there exists
a polynomial $\qst\in\Polyort_{k}(\P)$ such that
\begin{align}\label{eq:defqtilde}
  \Delta\qst = \Delta\vs
  \qquad{and}\qquad
  \SNORM{\qst}{1,\P} \lesssim \hP\NORM{\Delta\vs}{0,\P} \lesssim  \SNORM{\vs}{1,\P}.
\end{align}
We take $\eta_{\vs}\in\Nk{k-1}(\bP)$ given by
\begin{align}
  \eta_{\vs} = \nabla(\vs-\qst)\cdot\nor_\P + \frac{1}{\mbP}  \fint_{\bP}\vs.
  \label{eq:lemma4.1:00}
\end{align}
For any $\ws \in H^1(\P)$ we have 
\begin{align}
\langle \gKstar \eta_v, w \rangle &= \int_{\bP} \nabla(\vs-\qst)\cdot\nor_\P  \ws\dS + \int_{\bP}\ws  \frac{1}{\mbP}  \fint_{\bP}\vs =\nonumber\\
&= \int_{\P }\nabla(\vs-\qst) \cdot \nabla\ws + \int_{\P} \Delta(\vs-\qst) \ws + \left(\fint \vs\right)\left(\fint \ws\right)\nonumber\\
&= \int_{\P }\nabla(\vs-\qst) \cdot \nabla\ws  + \left(\fint \vs\right)\left(\fint \ws\right).\label{48a}
\end{align}
As $\vs\in\ringVhkP$, we then have
\[
\langle \gKstar \eta_v, v \rangle = \int_{\P }\nabla(\vs-\qst) \cdot \nabla\vs + \left(\fint \vs\right)^2 = | v |_{1,\P}^2 + \left(\fint \vs\right)^2.
\]

Moreover, using the triangular inequality and the  bound on $\qst$ in \eqref{eq:defqtilde} we see that
\begin{align}
  \SNORM{ \gKstar\eta_{\vs} }{-1,\P}
  = \sup_{\ws\in\HONEzrbr} \frac{ \bil{\gKstar\eta_{\vs}}{\ws} } {\SNORM{\ws}{1,\P}}
  = \sup_{\ws\in\HONEzrbr} \frac{ \int_{\P}\nabla\ws\cdot\nabla(\vs - \qst)\dV }{\SNORM{\ws}{1,\P}}
  = \SNORM{\vs-\qst}{1,\P}
  \lesssim \SNORM{\vs}{1,\P},
  \label{eq:lemma4.1:10}
\end{align}
and, setting $w = 1$ in \eqref{48a},
\[
\langle \gKstar \eta_v, 1 \rangle = \fint \vs,
\]
which yields
\[
| \langle \gKstar \eta_v, 1 \rangle |^2+ | \eta_v |_{-1,\P}^2 \lesssim  | v |_{1,\P}^2 + \left(\fint \vs\right)^2.
\]
  
  Then, for every $\vs \in \ringVhkP$
\begin{align*}
\sqrt{  \SNORM{\vs}{1,\P}^2 +   \left(\fint \vs\right)^2} 
  &=        \frac{ \bil{\gKstar\eta_{\vs}}{\vs }}{ \sqrt{  \SNORM{\vs}{1,\P}^2 +   \left(\fint \vs\right)^2}  }
  \lesssim \frac{ \bil{\gKstar \eta_{\vs}}{\vs} }{ \sqrt{
  | \langle \gKstar \eta_v, 1 \rangle |^2+ | \eta_v |_{-1,\P}^2		
  		} }\\
  &\leq      \sup_{\eta\in\Nk{k-1}(\bP)} 
  \frac{ \bil{\gKstar \eta}{\vs} }{ \sqrt{
  		| \langle \gKstar \eta, 1 \rangle |^2+ | \eta
  		 |_{-1,\P}^2		
  	} }
\end{align*}
which is the assertion of the lemma.
\ENDPROOF

 As the kernel of $\PinP{k}$ is included in $\ringVhkP$, this will allow us to neglect the interior degrees of freedom \TERM{D2} when
 designing the stabilization bilinear form.

We conclude by remarking that we have $\dims(\ringVhkP) = \dims (\Nk{k-1}(\bP))$. Indeed, we have the splitting (cf. \cite{Brackx-Constales-Ronveaux-Serras:1989})
 \begin{align*}
 \PS{k}(\P)=\PolyHarm_k(\P) \oplus\ABS{\xv}^2\PS{k-2}(\P),
 \end{align*}
 so that $\dims(\Polyort_k(\P))=\dims(\PS{k-2}(\P))$, which implies that $\dims(\ringVhkP) \geq \dims(\VhkP) - \dims (\PS{k-2}(\P)) = \dims(\Nk{k-1}(\bP))$. The converse inequality is a consequence of Lemma \ref{lem:4.1}.


\

\subsubsection*{Step 3. Transfer to the dual}
We  use the approach of Section~\ref{sec:dual:scalar:product} with
these definitions:

\begin{itemize}[parsep=2mm,leftmargin=*,labelindent=\parindent]
\item $\Vs=(\HONE(\P))'$ and $\Vsp=\HONE(\P)$;
\item $\Ws = \gKstar(\Nk{k-1}(\bP))$ and $\Wss = \ringVhkP$;
\item $\Wbar=\gKstar(\PS{0}(\bP))$ and
  $\Wbars=\PS{0}(\P)$.
\end{itemize}
Remark that $(\HONE(\P))'$, which is
naturally a dual space, plays here the role of the primal space, and,
vice-versa, $\HONE(\P)$ plays the role of the dual space.

\medskip
The projector operators $\Pbar: (\HONE(\P))' \to \gKstar(\PS{0}(\bP))$
and $\Pbars:\HONE(\P)\to \PS{0}(\P)$ are, respectively, defined as
\begin{align*}
  \Pbar(\eta) = \ABS{\bP}^{-1} \bil{\eta}{1}\gKstar(1) 
  \quad\textrm{and}\quad
  \Pbars(\us)=\fint_{\bP}\us,
\end{align*}
(we recall that $\constants(\omega)$ is the restriction to $\omega$ of
the space of constant functions).

  Thanks to Lemma \ref{lem:pitilde}, assumptions \TERM{A1}--\TERM{A4}
  are satisfied, provided  we endow the spaces $\HONE(\P)$ and $(\HONE(\P))'$
  with the couple of dual norms (cf. \cite{Bertoluzza-Prada:2020})
\begin{align*}
  \tNORM{w}{1,\P} = \sqrt{ \ABS{\fint_{\bP} v\dS }^2 + \SNORM{v}{1,\P}^2}, \qquad 
  \tNORM{\zeta}{-1,\P} = \sqrt{ \ABS{\langle \zeta, 1 \rangle}^2 + \SNORM{\zeta}{-1,\P}^2}.
\end{align*}
  In order to build a bilinear form $\sP$
  satisfying~\eqref{eq:requirement:S} on the space $\Wss = \ringVhkP$,
  to whose elements we do not have access (not even to the boundary
  values), we can instead build a bilinear form $\tsP$ on the space
  $\Nk{k-1}(\bP)$ (whose element are known in closed
  form), satisfying
\begin{align}
  \tsP(\eta,\eta)\simeq\SNORM{ \gKstar\eta }{-1,\P}^2
  \quad\textrm{and}\quad
  \tsP(\eta,\mu)\lesssim\SNORM{ \gKstar\eta }{-1,\P} \SNORM{ \gKstar\mu }{-1,\P}.
  \label{eq:equivtilde}
\end{align}

Once $\tsP$ is built, we  consider:

\begin{itemize}[parsep=2mm,leftmargin=*,labelindent=\parindent]
\item the set $\basedofs=\big\{\zeta_i,\,i=1,\ldots,\Nbdofs\big\}$ of
  the piecewise polynomials of degrees up to $k-1$ used to evaluate
  the degrees of freedom \TERM{D1} associated with the elemental
  boundary $\bP$.
The set $\basedofs$ is a basis of the space $\Nk{k-1}(\bP)$;
\item the basis functions $\phi_i\in\ringVhkP$ associated with
the elements $\zeta_i$ of the basis $\basis$, verifying
\begin{align*}
  \int_{\bP}\phi_i\zeta_j\dS = \delta_{ij},\quad i,j=1,\ldots,\Nbdofs.
\end{align*}
\end{itemize}

The value of a degree of freedom of a function in $\ringVhkP$
corresponding to the unknown basis function $\phi_i$ coincides with
its $i$-th boundary degree of freedom in the complete local VEM space.
Then, we apply the framework of Section~\ref{sec:dual:scalar:product}.
We let $\matS=(s_{ij})$ denote the stiffness matrix associated to the
bilinear form $\ss=\tsP$, which  is
\begin{align*}
  \ss_{ij}=\ss(\zeta_i,\zeta_j)=\tsP(\zeta_j,\zeta_i)\, \qquad i,j = 1,\cdots,\Nbdofs,
\end{align*}
  We define matrix $\matr{\Sigma}=(\sigma_{ij})$ by $\matr{\Sigma
  }=\invS$, where $\invS$ is the reflexive generalized inverse of
  $\matS$ of Section~\ref{sec:dual:scalar:product}, and the bilinear
  form $\sP(\cdot,\cdot)$ by setting
\begin{align}\label{eq:sP:def}
  \sP(\phi_j,\phi_i)=\sigma_{ij}\,
  \qquad i,j = 1,\cdots,\Nbdofs.
\end{align}

Proposition \ref{prop:2.7} states that $\sP(\cdot,\cdot)$ satisfies
\eqref{eq:requirement:S}.
  We also have that 
\begin{align*}
  \sP(v,w) = \wv^T \matr{\Sigma} \vv = \wv^T \invS \vv,
\end{align*}
  where $\wv$ and $\vv$ are the vectors collecting the boundary degrees of
  freedom \TERM{D1} of two functions $\ws$ and $\vs$ in $\ringVhkP$.
  So,
we do not actually need to build the basis functions $\phi_i$, but we
define the action of the bilinear form $\sP$ directly on the vectors
of degrees of freedom.
This strategy allows us to reduce the construction of a bilinear form
$\sP(\cdot,\cdot)$ satisfying~\eqref{eq:requirement:S} to the
construction of a bilinear form $\tsP(\cdot,\cdot)$
satisfying~\eqref{eq:equivtilde}.

\subsubsection*{Step 4. Factoring out higher order polynomials}
We deal now with the construction of a bilinear form satisfying
\eqref{eq:equivtilde}.
To this end, we consider the seminorm
$\SNORM{\,\cdot\,}{-1/2,\bP}:\HS{-\frac12}(\bP)\to\REAL^+$ defined by
\begin{align}
  \SNORM{\eta}{-1/2,\bP} = \sup_{ \phi\in \Htracezr}\frac{\bil{\eta}{\vs}}{\SNORM{\phi}{1/2,\bP}},
  \label{eq:seminorm:minus:one:half:def}
\end{align}
where the functional space $ \Htracezr$ is defined as
\begin{align*}
  \Htracezr =
  \bigg\{
  \vs\in\HS{\frac12}(\bP)
  \,\,\textrm{such~that}\,\,
  \int_{\bP}\vs\dS = 0
  \bigg\}.
\end{align*}

Observe that, for all $\eta \in H^{-\frac12}(\bP)$, it holds that 
\begin{align*}
  \SNORM{ \gKstar\eta }{-1,\P}\simeq\SNORM{\eta}{-1/2,\P}.
\end{align*}
Then, we can
rewrite~\eqref{eq:equivtilde} as
\begin{align}
  \tsP(\eta,\eta)\simeq \SNORM{ \eta }{-1/2,\bP}^2
  \quad\textrm{and}\quad
  \tsP(\eta,\mu) \lesssim \SNORM{ \eta }{-1/2,\bP}\,\SNORM{\mu}{-1/2,\bP}.
  \label{eq:equivtilde2}
\end{align}

\medskip
We now split $\Nk{k-1}(\bP)$ as
\begin{align*}
  \Nk{k-1}(\bP) = \Nk{0}(\bP)\oplus\Nk{0}^\perp(\bP),
\end{align*}
where $\Nk{0}(\bP)$ is the space of functions that are
 constant on each edge of $\bP$, and
\begin{align*}
  \Nk{0}^\perp(\bP)
  = \bigg\{\eta\in\Nk{k-1}(\bP):\,\int_{e}\eta\dS = 0,\,\text{ for all edge } e \in\EdgesP\bigg\}
\end{align*}
is the space of piecewise polynomials of order up to $k-1$ with zero
average on each edge of $\bP$.
We start by providing a lower bound, which holds for all
$\eta\in\Nk{k-1}(\bP)$ under the very weak assumption \SRthree~ on the
edge partition $\EdgesP$ of $\bP$.
\begin{lemma}\label{lem:boundbelow}
  Assume that  \SRthree~holds.
  Then, for all $\eta\in\Nk{k-1}(\bP)$ with $\int_{\bP} \eta \dS = 0$ we have
  \begin{align}
    \snorm{\eta}{-1/2,\bP}^2 \gtrsim \sum_{\E\in\Eset^{\P}}\hE\int_{\E}\ABS{\eta }^2\dS.
    \label{eq:boundbelow}
  \end{align}
\end{lemma}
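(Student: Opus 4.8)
The plan is to exploit that $\snorm{\cdot}{-1/2,\bP}$ is a \emph{dual} seminorm, defined by a supremum over $\Htracezr$, so that to bound it from below it suffices to exhibit one well-chosen test function. Given $\eta\in\Nk{k-1}(\bP)$ with $\int_{\bP}\eta\dS=0$, I would take
\begin{align*}
  \phi = \sum_{\E\in\EdgesP}\hE\, b_{\E}\,\restrict{\eta}{\E},
\end{align*}
where $b_{\E}$ is the rescaled edge-bubble on $\E$ (the quadratic equal to $1$ at the midpoint and vanishing at the two endpoints of $\E$). Since each summand is supported on a single edge and vanishes at its endpoints, $\phi$ is continuous and piecewise polynomial on $\bP$, hence $\phi\in\HS{\frac12}(\bP)$; subtracting the mean $\fint_{\bP}\phi$ places it in $\Htracezr$ without changing $\SNORM{\phi}{1/2,\bP}$ and, because $\bil{\eta}{1}=\int_{\bP}\eta=0$, without changing the pairing $\bil{\eta}{\phi}$. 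The proof then reduces to the two estimates $\bil{\eta}{\phi}\gtrsim\sum_{\E}\hE\int_{\E}\ABS{\eta}^2\dS$ and $\SNORM{\phi}{1/2,\bP}^2\lesssim\sum_{\E}\hE\int_{\E}\ABS{\eta}^2\dS$, which combine to give $\snorm{\eta}{-1/2,\bP}\geq\bil{\eta}{\phi}/\SNORM{\phi}{1/2,\bP}\gtrsim(\sum_{\E}\hE\int_{\E}\ABS{\eta}^2\dS)^{1/2}$.

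The lower bound on the pairing is elementary and mesh-independent: $\bil{\eta}{\phi}=\sum_{\E}\hE\int_{\E}b_{\E}\ABS{\eta}^2\dS$, and since $b_{\E}>0$ in the interior of $\E$ the quadratic forms $p\mapsto\int_{\E}b_{\E}\ABS{p}^2$ and $p\mapsto\int_{\E}\ABS{p}^2$ are equivalent on the finite-dimensional space $\PS{k-1}(\E)$, with scale-invariant constants depending only on $k$. Hence $\bil{\eta}{\phi}\simeq\sum_{\E}\hE\int_{\E}\ABS{\eta}^2\dS$.

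The substance is the upper bound on $\SNORM{\phi}{1/2,\bP}^2$. Writing $\phi_{\E}=\hE b_{\E}\restrict{\eta}{\E}$ and splitting the double integral defining the seminorm into diagonal ($\E=\Ep$) and off-diagonal ($\E\neq\Ep$) parts, the diagonal contribution is $\sum_{\E}\SNORM{\phi_{\E}}{1/2,\E}^2$, while $\ABS{\phi_{\E}(x)-\phi_{\Ep}(y)}^2\leq 2\ABS{\phi_{\E}(x)}^2+2\ABS{\phi_{\Ep}(y)}^2$ reduces the off-diagonal part, by symmetry, to $\sum_{\E}\int_{\E}\ABS{\phi_{\E}(x)}^2\big(\int_{\bP\setminus\E}\ABS{x-y}^{-2}\,dy\big)\,dx$. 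A scaling argument on a reference edge gives $\SNORM{\phi_{\E}}{1/2,\E}^2\lesssim\hE\int_{\E}\ABS{\eta}^2$, and, using that $b_{\E}$ vanishes linearly at $\partial\E$ (so $b_{\E}^2/\mathrm{dist}(\cdot,\partial\E)$ is bounded), also the weighted estimate $\int_{\E}\ABS{\phi_{\E}}^2/\mathrm{dist}(x,\partial\E)\lesssim\hE\int_{\E}\ABS{\eta}^2$. Thus everything hinges on the geometric bound
\begin{align*}
  \int_{\bP\setminus\E}\frac{dy}{\ABS{x-y}^2}\lesssim\frac{1}{\mathrm{dist}(x,\partial\E)}\qquad\text{for all }x\in\E,
\end{align*}
after which the off-diagonal part is dominated by the same weighted quantity $\sum_{\E}\int_{\E}\ABS{\phi_{\E}}^2/\mathrm{dist}(x,\partial\E)$.

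This geometric estimate is the main obstacle, and it is precisely where the mesh hypotheses enter. Parametrizing $\bP$ by arclength from the endpoint $V$ of $\E$ nearest $x$, the bound would follow immediately, by a telescoping integral, if the chord $\ABS{x-y}$ were comparable to the arclength from $x$ to $y$; the chord--arc comparison, however, can fail both globally (the boundary curving back toward $\E$) and locally (a chain of tiny edges abutting a much larger one). The global failure is excluded by the star-shapedness assumption \SRone, which keeps non-adjacent portions of $\bP$ at distance $\gtrsim\hP$. The local failure is controlled by \SRthree: on the edges of $\EdgesPp$ the adjacent-length comparability \SRthreea keeps the summed contributions geometric, hence telescoping to $1/\mathrm{dist}(x,\partial\E)$, while the boundedly many exceptional edges of $\EdgesPpp$ (by \SRthreeb) contribute only a fixed number of terms, each estimated directly. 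Assembling the diagonal, off-diagonal and geometric bounds yields $\SNORM{\phi}{1/2,\bP}^2\lesssim\sum_{\E}\hE\int_{\E}\ABS{\eta}^2\dS$ and closes the argument.
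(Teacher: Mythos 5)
Your proposal is correct, and it reaches the bound by a route that differs from the paper's in its two key ingredients. For the test function, the paper invokes the edge-wise inf-sup between $\PS{k-1}(\E)$ and $\PS{k+1}^0(\E)=\PS{k+1}(\E)\cap\HONEzr(\E)$ (via the Riesz map on a reference edge) to produce an abstract $\phi_\E(\eta)$ with $\int_\E\eta\,\phi_\E(\eta)=\NORM{\eta}{0,\E}^2$; your bubble $b_\E\,\restrict{\eta}{\E}$ is an explicit element of that same space doing the same job, so this part is a concrete realization of the paper's construction rather than a new idea. The genuine divergence is in bounding $\SNORM{\phi}{1/2,\bP}$. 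The paper splits $\phi=\phi_1+\phi_2$ according to the partition $\EdgesP=\EdgesPp\cup\EdgesPpp$ of \SRthree, treats $\phi_1$ by the Faermann patch argument (near-field via an inverse inequality on $\tEe$, which needs the gradedness \SRthreea; far-field via $\int_{\bP\setminus\tEe}\ABS{x-y}^{-2}dy\lesssim\hE^{-1}$), and treats $\phi_2$ by a triangle inequality in $\HS{\frac12}_{00}$ over the boundedly many edges of $\EdgesPpp$, paying a factor $\sqrt{\NSs}$. You instead make a single diagonal/off-diagonal split and absorb the off-diagonal part into the weighted quantity $\int_\E\ABS{\phi_\E}^2/\mathrm{dist}(x,\partial\E)$, which the linear vanishing of $b_\E$ controls edge by edge with scale-invariant constants; this avoids both the case distinction and the $\sqrt{\NSs}$ loss, and is the cleaner argument. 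The one step you assert rather than prove is the kernel bound $\int_{\bP\setminus\E}\ABS{x-y}^{-2}dy\lesssim\mathrm{dist}(x,\partial\E)^{-1}$; you correctly flag it as the crux and your account of it is right, though I would note that if one derives it from a chord--arc comparison for $\bP$ (which \SRone{} supplies, with constant depending on $\gamma_0$) and then telescopes in the arc-length variable, the estimate holds independently of how $\bP$ is partitioned into edges --- so your proof, once that bound is written out, actually needs less than \SRthree{} and would strengthen the lemma. Summing over edges instead, as you sketch, does reintroduce a dependence on \SRthree{} but is equally valid.
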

The proof of this Lemma is quite technical and we report it in
Appendix \ref{appendix:technical}.
  On $\Nk{0}^\perp(\bP)$ we can also prove an upper bound, as stated by the following lemma.
\begin{lemma}
  \label{lem:norm_meno}
 For all
  $\eta\in\LTWO(\bP)$ such that $\int_{\E}\eta\dS=0$ for all edges
  $\E\in \EdgesP$, it holds that
  \begin{align}
    \snorm{\eta}{-1/2,\bP}^2 \lesssim \sum_{\E\in\Eset^{\P}} \hE\int_{\E}\ABS{\eta}^2\dS.
    \label{eq:normmeno}
  \end{align}
\end{lemma}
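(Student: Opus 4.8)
The plan is to use the duality definition~\eqref{eq:seminorm:minus:one:half:def} of $\SNORM{\cdot}{-1/2,\bP}$ and bound the pairing $\bil{\eta}{\phi}$ edge by edge, exploiting the zero-average hypothesis on $\eta$. First I would observe that, since $\int_{\E}\eta\dS = 0$ for every $\E\in\Eset^{\P}$, the constant $\fint_{\E}\phi$ may be subtracted edge-wise without altering the pairing, so that for any $\phi\in\Htracezr$,
\begin{align*}
  \bil{\eta}{\phi}
  = \sum_{\E\in\Eset^{\P}}\int_{\E}\eta\,\phi\dS
  = \sum_{\E\in\Eset^{\P}}\int_{\E}\eta\,\big(\phi-\fint_{\E}\phi\big)\dS.
\end{align*}
Applying the Cauchy--Schwarz inequality on each edge then gives
\begin{align*}
  \ABS{\bil{\eta}{\phi}}
  \leq \sum_{\E\in\Eset^{\P}}\NORM{\eta}{0,\E}\,\NORM{\phi-\fint_{\E}\phi}{0,\E}.
\end{align*}

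The key local ingredient is the scaled fractional Poincar\'e inequality
\begin{align*}
  \NORM{\phi-\fint_{\E}\phi}{0,\E}^2 \lesssim \hE\,\SNORM{\phi}{1/2,\E}^2,
\end{align*}
which I would obtain by mapping $\E$ onto the reference unit interval: the seminorm in~\eqref{eq:HS-half:seminorm:def} is scale invariant, while the $\LTWO$ norm scales like $\hE$, and on the reference interval the zero-average function is controlled by its $H^{1/2}$ seminorm through the standard Poincar\'e--Wirtinger inequality in $\HS{1/2}$. Substituting this bound and applying the discrete Cauchy--Schwarz inequality over the edges yields
\begin{align*}
  \ABS{\bil{\eta}{\phi}}
  \lesssim \Big(\sum_{\E\in\Eset^{\P}}\hE\,\NORM{\eta}{0,\E}^2\Big)^{1/2}
  \Big(\sum_{\E\in\Eset^{\P}}\SNORM{\phi}{1/2,\E}^2\Big)^{1/2}.
\end{align*}

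To close the estimate I would use the subadditivity of the $\HS{1/2}$ seminorm: because each edge is a straight segment, the Euclidean distance in~\eqref{eq:HS-half:seminorm:def} restricted to $\E\times\E$ coincides with the intrinsic one, and since the integrand is nonnegative and $\bigcup_{\E}(\E\times\E)\subset\bP\times\bP$, one has $\sum_{\E\in\Eset^{\P}}\SNORM{\phi}{1/2,\E}^2\leq\SNORM{\phi}{1/2,\bP}^2$. Hence
\begin{align*}
  \ABS{\bil{\eta}{\phi}}
  \lesssim \Big(\sum_{\E\in\Eset^{\P}}\hE\,\NORM{\eta}{0,\E}^2\Big)^{1/2}\SNORM{\phi}{1/2,\bP},
\end{align*}
and dividing by $\SNORM{\phi}{1/2,\bP}$ and taking the supremum over $\phi\in\Htracezr$ gives~\eqref{eq:normmeno}.

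The main obstacle is establishing the local fractional Poincar\'e inequality with the correct $\hE^{1/2}$ scaling; the remaining steps are a mechanical combination of two Cauchy--Schwarz inequalities and the seminorm subadditivity. I would also emphasize that, unlike the lower bound in Lemma~\ref{lem:boundbelow}, this upper bound requires no shape-regularity assumption on $\EdgesP$, since the argument is entirely local to each edge and the only passage from local to global seminorms goes in the favourable direction.
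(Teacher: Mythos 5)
Your proof is correct and follows essentially the same route as the paper's: subtract the edge-wise average of the test function using the zero-mean hypothesis on $\eta$, apply Cauchy--Schwarz and the scaled edge-local fractional Poincar\'e inequality $\NORM{\phi-\fint_{\E}\phi}{0,\E}\lesssim \hE^{1/2}\SNORM{\phi}{1/2,\E}$, then a discrete Cauchy--Schwarz over the edges and the subadditivity $\sum_{\E}\SNORM{\phi}{1/2,\E}^2\leq\SNORM{\phi}{1/2,\bP}^2$. Your explicit justification of the subadditivity step and the observation that no shape-regularity assumption is needed are both accurate and slightly more detailed than the paper's own write-up.
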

\begin{proof}
  Consider $\eta\in \LTWO(\bP)$ such that its average on every edge
  $\E\in\EdgesP$ is zero.
  Let $\vs\in\HS{\frac12}(\bP)$ and denote its average on $\E$ by
  $\bar{\vs}^{\E}$.
  The Cauchy-Schwarz inequality and a Poincar\'e-like inequality yield
  \begin{align*}
    \int_{\E}\eta\vs\dS
    = \int_{\E}\eta\big(\vs-\bar{\vs}^{\E}\big)\dS
    \leq \NORM{\eta}{0,\E}\,\NORM{\vs-\bar{\vs}^{\E}}{0,\E}
    \leq \NORM{\eta}{0,\E}\,\hE^{\frac12}\SNORM{\vs}{1/2,\E},
  \end{align*}
  which holds for every edge $\E\in\EdgesP$.
  Using again the Cauchy-Schwarz inequality yields:
  \begin{align*}
    \bil{\eta}{\vs}
    &= \int_{\bP}\eta\vs\dS
    \leq \sum_{\E\in\EdgesP}\NORM{\eta}{0,\E}\,\hE^{\frac12}\SNORM{\vs}{1/2,\E}
    \leq \left(\sum_{\E\in\EdgesP}\hE\NORM{\eta}{0,\E}^2\right)^{\frac12}\,\left(\sum_{\E\in\EdgesP}\SNORM{\vs}{1/2,\E}^2\right)^{\frac12}
    \\[0.5em]
    &\lesssim \left(\sum_{\E\in\EdgesP}\hE\NORM{\eta}{0,\E}^2\right)^{\frac12}\,\SNORM{\vs}{1/2,\bP}.
  \end{align*}
  The assertion of the lemma follows by using this inequality in the
  definition \eqref{eq:seminorm:minus:one:half:def} of the seminorm $\SNORM{\eta}{-1/2,\bP}$.
\end{proof}
The following corollary is a straightforward consequence of
Lemmas~\ref{lem:boundbelow} and~\ref{lem:norm_meno}.
\begin{corollary}
  \label{cor:equivaveragefree}
  If assumption~\SRthree~holds, then, for all
  $\eta\in\Nk{0}^\perp(\bP)$ we have
  \begin{align}
    \snorm{\eta}{-1/2,\bP}^2\simeq\sum_{\E\in\Eset^{\P}}\hE\int_{\E}\ABS{\eta}^2\dS.
    \label{eq:eq:normmeno}
  \end{align}
\end{corollary}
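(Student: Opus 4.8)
The plan is to obtain the equivalence \eqref{eq:eq:normmeno} by simply chaining the two preceding lemmas, once we have checked that every $\eta\in\Nk{0}^\perp(\bP)$ satisfies the hypotheses of both. First I would unwind the definition of $\Nk{0}^\perp(\bP)$: such an $\eta$ is a piecewise polynomial of degree at most $k-1$, hence $\eta\in\Nk{k-1}(\bP)$, and its average vanishes on every edge, i.e. $\int_{\E}\eta\dS=0$ for all $\E\in\EdgesP$. Summing these edgewise conditions gives $\int_{\bP}\eta\dS=\sum_{\E\in\EdgesP}\int_{\E}\eta\dS=0$, so the weaker global zero-average condition holds as well.

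For the lower bound I would invoke Lemma~\ref{lem:boundbelow}: since \SRthree{} is assumed, $\eta\in\Nk{k-1}(\bP)$, and $\int_{\bP}\eta\dS=0$, inequality \eqref{eq:boundbelow} gives $\snorm{\eta}{-1/2,\bP}^2\gtrsim\sum_{\E\in\Eset^{\P}}\hE\int_{\E}\ABS{\eta}^2\dS$. For the upper bound I would invoke Lemma~\ref{lem:norm_meno}, whose sole hypothesis—that $\eta$ has zero average on each edge—holds by the very definition of $\Nk{0}^\perp(\bP)$; this yields $\snorm{\eta}{-1/2,\bP}^2\lesssim\sum_{\E\in\Eset^{\P}}\hE\int_{\E}\ABS{\eta}^2\dS$. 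Combining the two inequalities produces the claimed equivalence \eqref{eq:eq:normmeno}.

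There is essentially no obstacle here, since all the real work sits in Lemmas~\ref{lem:boundbelow} and~\ref{lem:norm_meno}; the only point deserving care is the mismatch between their hypotheses, namely \emph{zero average on $\bP$} (used for the lower bound) versus \emph{zero average on each edge} (used for the upper bound). The space $\Nk{0}^\perp(\bP)$ is defined precisely so that both hold at once, the edgewise condition implying the global one. I would also emphasize that the geometric assumption \SRthree{} is needed only for the lower bound, whereas the upper bound of Lemma~\ref{lem:norm_meno} is valid with no restriction whatsoever on the edge partition $\EdgesP$.
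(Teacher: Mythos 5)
Your proposal is correct and coincides with the paper's argument: the paper states the corollary as a straightforward consequence of Lemmas~\ref{lem:boundbelow} and~\ref{lem:norm_meno}, which is precisely the chaining you carry out. Your explicit verification that the edgewise zero-average condition defining $\Nk{0}^\perp(\bP)$ implies the global zero-average hypothesis of Lemma~\ref{lem:boundbelow} is the only detail the paper leaves implicit, and you handle it correctly.
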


Now, every $\eta\in\Nk{k-1}(\bP)$ can be split as $\eta=\eta^0+\eta^\perp$
with $\eta^0\in\Nk{0}(\bP)$ and $\eta^\perp\in\Nk{0}^\perp(\bP)$, and
we have
\begin{align*}
  \NORM{ \eta^\perp }{0,\E}
  =    \NORM{ \eta - \fint_{\E}\eta\dS }{0,\E}
  \leq \NORM{ \eta }{0,\E}.
\end{align*}
Then, using Lemma \ref{lem:boundbelow} and Lemma \ref{lem:norm_meno}, we can write
\begin{align*}
  \SNORM{\eta^ \perp}{-1/2,\bP}^2 
  &\lesssim \sum_e h_e \NORM{\eta^ \perp}{0,e}^2  =
  \sum_e h_e \NORM{\eta - \fint_e \eta  \pm \fint_{\bP}\eta}{0,e}^2 \\
  &\leq
  \sum_e h_e \left(\NORM{\eta - \fint_{\bP} \eta}{0,e}^2 + \NORM{\fint_e(\eta - \fint_{\bP} \eta)}{0,e}^2 \right)\\
  &\lesssim \sum_e h_e \NORM{\eta - \fint_{\bP}\eta}{0,e}^2 
  \lesssim \SNORM{\eta - \fint_{\bP}\eta}{-1/2,\bP}^2  = \SNORM{\eta}{-1/2,\bP}^2,
\end{align*}
and, by triangular inequality,
\begin{align*}
  \SNORM{\eta^0}{-1/2,\bP} \lesssim \SNORM{\eta}{-1/2,\bP}  + \SNORM{\eta^ \perp}{-1/2,\bP}^2 \lesssim \SNORM{\eta}{-1/2,\bP}.
\end{align*}
Corollary \ref{cor:equivaveragefree} yields the following
result.
\begin{corollary}
  \label{cor:splitting}
  If assumption~\SRthree~ holds, then, for
  $\eta\in\Nk{k-1}(\bP)$ split as $\eta=\eta^0+\eta^\perp$ with
  $\eta^0\in\Nk{0}(\bP)$ and $\eta^\perp\in\Nk{0}^\perp(\bP)$, we have
  \begin{align*}
    \snorm{ \eta }{-1/2,\bP}^2
    \simeq \SNORM{ \eta^0 }{-1/2,\bP}^2 + \SNORM{ \eta^\perp }{-1/2,\bP}^2
    \simeq \SNORM{ \eta^0 }{-1/2,\bP}^2 + \sum_{\E\in\Eset^{\P}} \hE\int_{\E}\ABS{\eta - \eta^0}^2\dS.
  \end{align*}
\end{corollary}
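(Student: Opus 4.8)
The plan is to obtain both equivalences purely by combining the component estimates established in the display immediately preceding the statement with Corollary~\ref{cor:equivaveragefree}; no new analytic input is required, so the argument amounts to bookkeeping once the genuinely nontrivial bound (the one that uses \SRthree) is in hand. I would therefore treat the two $\simeq$ relations in turn.

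For the first equivalence I would prove the two inequalities separately. The bound $\snorm{\eta}{-1/2,\bP}^2 \lesssim \SNORM{\eta^0}{-1/2,\bP}^2 + \SNORM{\eta^\perp}{-1/2,\bP}^2$ is immediate: writing $\eta = \eta^0 + \eta^\perp$ and using the triangle inequality for the seminorm $\SNORM{\cdot}{-1/2,\bP}$ followed by $(a+b)^2 \le 2a^2 + 2b^2$ gives it at once. For the reverse inequality I would invoke the two estimates $\SNORM{\eta^\perp}{-1/2,\bP} \lesssim \SNORM{\eta}{-1/2,\bP}$ and $\SNORM{\eta^0}{-1/2,\bP} \lesssim \SNORM{\eta}{-1/2,\bP}$ already proven just above the statement (the first via Lemmas~\ref{lem:boundbelow} and~\ref{lem:norm_meno} under \SRthree, the second by the triangle inequality from the first); squaring and adding yields $\SNORM{\eta^0}{-1/2,\bP}^2 + \SNORM{\eta^\perp}{-1/2,\bP}^2 \lesssim \snorm{\eta}{-1/2,\bP}^2$, which closes the first $\simeq$.

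For the second equivalence it suffices to rewrite only the $\eta^\perp$ term. Since $\eta^\perp = \eta - \eta^0 \in \Nk{0}^\perp(\bP)$ has zero average on each edge, Corollary~\ref{cor:equivaveragefree} applies and gives $\SNORM{\eta^\perp}{-1/2,\bP}^2 \simeq \sum_{\E\in\Eset^{\P}} \hE\int_{\E}\ABS{\eta^\perp}^2 = \sum_{\E\in\Eset^{\P}} \hE\int_{\E}\ABS{\eta - \eta^0}^2$, and substituting this into the middle expression produces the stated right-hand side while leaving $\SNORM{\eta^0}{-1/2,\bP}^2$ untouched. As for the main point requiring care, there is essentially no remaining obstacle: the one truly nontrivial ingredient is the control $\SNORM{\eta^\perp}{-1/2,\bP} \lesssim \SNORM{\eta}{-1/2,\bP}$, which is where \SRthree\ and the two preceding lemmas enter and which has already been dispatched in the display above (using implicitly that $\SNORM{\cdot}{-1/2,\bP}$ annihilates constants, so that $\SNORM{\eta - \fint_{\bP}\eta}{-1/2,\bP} = \SNORM{\eta}{-1/2,\bP}$). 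Everything else reduces to the triangle inequality and a direct application of Corollary~\ref{cor:equivaveragefree}.
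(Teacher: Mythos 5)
Your proposal is correct and follows essentially the same route as the paper: the only nontrivial input is the bound $\SNORM{\eta^\perp}{-1/2,\bP}\lesssim\SNORM{\eta}{-1/2,\bP}$ (and, via the triangle inequality, the same for $\eta^0$) established in the display immediately preceding the corollary using Lemmas~\ref{lem:boundbelow} and~\ref{lem:norm_meno}, after which the first equivalence is bookkeeping and the second is a direct application of Corollary~\ref{cor:equivaveragefree} to $\eta^\perp=\eta-\eta^0$. The paper leaves exactly this assembly implicit, and your write-up supplies it correctly, including the observation that the seminorm annihilates constants.
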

In view of Corollary~\ref{cor:splitting}, we  define the
stabilizing bilinear form
$\tsP(\cdot,\cdot):\Nk{k-1}(\bP)\times\Nk{k-1}(\bP)\to\REAL$ as
\begin{align}\label{eq:stP:def}
  \tsP(\eta,\zeta)
  = \ss^0(\eta^0,\zeta^0)
  + \sum_{\E\in\EdgesP} \hE\int_{\E}(\eta - \eta^0)(\zeta - \zeta^0)\dS, 
\end{align}
where $\ss^0(\cdot,\cdot):\Nk{0}(\bP)\times\Nk{0}(\bP)\to\REAL$ can be
any bilinear form satisfying
\begin{align}\label{eq:cond:s0}
  \ss^0(\eta^0,\zeta^0) \lesssim \SNORM{\eta^0}{-1/2,\bP}\SNORM{\zeta^0}{-1/2,\bP},
  \qquad\textrm{and}\quad
  \ss^0(\eta^0,\eta^0) \gtrsim \SNORM{\eta^0}{-1/2,\bP}^2
\end{align}
for all $\eta^0$,$\zeta^0\in\Nk{0}(\bP)$.
In the next section we will provide three different
strategies to build suitable bilinear forms $\ss^0(\cdot,\cdot)$.

\section{Stabilization for the lowest order nonconforming VEM}
\label{sec:lowstab}
We devote this section to the construction and analysis of several
possible bilinear forms
$\ss^0(\cdot,\cdot):\Nk{0}(\bP)\times\Nk{0}(\bP)\to\REAL$ satisfying
\eqref{eq:cond:s0}.
We consider three different strategies.
The first one is to define $\ss^0$ as a weighted $\LTWO$ inner
product, at the price of the loss of a logarithmic factor in the
stability estimate.
The second strategy is to resort to the use of a wavelet decomposition
of the space $\Nk{0}(\bP)$, and take advantage of the equivalent
expressions for the Sobolev norms of negative and/or fractionary order
that such bases allow.
Finally, in the spirit of Remark \ref{rem:twicedual},  we construct a second, explicitly known, discrete space, in a stable
  duality relation with $\Nk{0}(\bP)$.
  For this discrete space we explicitly define a bilinear form
  inducing the $\HS{\frac12}(\bP)$ seminorm, that we use to construct
  the bilinear form $\ss^0(\cdot,\cdot)$ by duality.

\subsection{A quasi optimal stabilization term}\label{sec:quasi:optimal}
We can define the bilinear form $\ss^0$ as
\begin{align}
  \ss^0(\eta,\mu) =\ssltwo(\eta,\mu)
  =\sum_{\E\in\EdgesP}\hE\int_{\E}
  \left(\eta - \fint_{\bP}\eta\right)
  \left(\mu  - \fint_{\bP}\mu \right)\dS,
  \label{eq:s0:def}
\end{align}
and we have the following lemma. 
\begin{lemma}
  If assumption ~\SRtwopp~ holds, then, setting
  \[\hhP = \min_{\E\in\EdgesP}\hE,\] for all $\eta\in\Nk{0}(\bP)$ 
 we have
  \begin{align*}
    (1+\log(\hP/\hhP))^{-1} \snorm{\eta}{-1/2,\bP}^2
    \lesssim  \ss^0(\eta,\eta)
    \lesssim  \SNORM{\eta}{-1/2,\bP}^2.
  \end{align*}
\end{lemma}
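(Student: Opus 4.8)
The plan is to prove the two inequalities separately, reducing everything to the piecewise–constant structure of $\eta$ and to the duality definition~\eqref{eq:seminorm:minus:one:half:def} of $\snorm{\cdot}{-1/2,\bP}$. As a preliminary reduction I would observe that both sides are invariant under $\eta\mapsto\eta-\fint_{\bP}\eta$: the form $\ss^0$ only sees $\eta-\fint_{\bP}\eta$ by its very definition~\eqref{eq:s0:def}, while $\snorm{\cdot}{-1/2,\bP}$ annihilates constants, since every admissible test function in $\Htracezr$ has zero average on $\bP$. I may therefore assume $\fint_{\bP}\eta=0$. Writing $\eta_{\E}$ for the value of $\eta$ on the edge $\E$, this gives the explicit expression $\ss^0(\eta,\eta)=\sum_{\E\in\EdgesP}\hE\int_{\E}\eta^2\dS=\sum_{\E\in\EdgesP}\hE^2\eta_{\E}^2$.

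The upper bound is then immediate from Lemma~\ref{lem:boundbelow}. Indeed, assumption \SRtwopp{} trivially implies \SRthree{} (take $\EdgesPp=\emptyset$ and $\EdgesPpp=\EdgesP$, so that \SRthreea{} is vacuous and \SRthreeb{} reduces to \SRtwopp{}), and $\eta\in\Nk{0}(\bP)\subset\Nk{k-1}(\bP)$ has zero average on $\bP$; hence Lemma~\ref{lem:boundbelow} yields $\snorm{\eta}{-1/2,\bP}^2\gtrsim\sum_{\E\in\EdgesP}\hE\int_{\E}\eta^2\dS=\ss^0(\eta,\eta)$, which is exactly the right-hand inequality.

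For the lower bound I would argue by duality. For any $\vs\in\Htracezr$, since $\eta$ is edgewise constant and $\fint_{\bP}\vs=0$, I expand the pairing as $\bil{\eta}{\vs}=\sum_{\E\in\EdgesP}\eta_{\E}\int_{\E}\vs\dS=\sum_{\E\in\EdgesP}(\hE\eta_{\E})\,\bar{\vs}^{\E}$, where $\bar{\vs}^{\E}=\fint_{\E}\vs$. A Cauchy--Schwarz inequality then separates the two scales,
\[
\ABS{\bil{\eta}{\vs}}\leq\Big(\sum_{\E\in\EdgesP}\hE^2\eta_{\E}^2\Big)^{1/2}\Big(\sum_{\E\in\EdgesP}\ABS{\bar{\vs}^{\E}}^2\Big)^{1/2}=\ss^0(\eta,\eta)^{1/2}\Big(\sum_{\E\in\EdgesP}\ABS{\bar{\vs}^{\E}}^2\Big)^{1/2},
\]
so that, taking the supremum over $\vs$ in the definition of $\snorm{\eta}{-1/2,\bP}$, the lower bound reduces to the Sobolev-type estimate $\sum_{\E\in\EdgesP}\ABS{\bar{\vs}^{\E}}^2\lesssim(1+\log(\hP/\hhP))\,\SNORM{\vs}{1/2,\bP}^2$, valid for every $\vs\in\Htracezr$.

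Proving this last estimate will be the main obstacle. Because \SRtwopp{} bounds the number of edges by $\NSs$, it suffices to establish the per-edge bound $\ABS{\bar{\vs}^{\E}-\fint_{\bP}\vs}^2\lesssim(1+\log(\hP/\hE))\,\SNORM{\vs}{1/2,\bP}^2$ (recall $\fint_{\bP}\vs=0$) and sum over the at most $\NSs$ edges, using $\hE\geq\hhP$ so that $\log(\hP/\hE)\leq\log(\hP/\hhP)$. To prove the per-edge bound I would parametrize $\bP$ by arc length — here \SRone{} is used to guarantee that the perimeter is comparable to $\hP$ and that Euclidean and geodesic distances on $\bP$ are equivalent, so that $\SNORM{\cdot}{1/2,\bP}$ is comparable to the intrinsic $\HS{1/2}$ seminorm of the corresponding periodic function of arc length — and control the oscillation of edge averages across the dyadic scales between $\hE$ and $\hP$. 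The delicate point is that the naive telescoping over the $\sim\log(\hP/\hE)$ nested arcs $\E=I_0\subset I_1\subset\cdots\subset I_M=\bP$, combined with the elementary estimate $\ABS{\bar{\vs}^{I_j}-\bar{\vs}^{I_{j+1}}}^2\lesssim\SNORM{\vs}{1/2,I_{j+1}}^2$, only produces a quadratic factor $\log^2(\hP/\hE)$; to recover the sharp single logarithm one must estimate the telescoping sum by a Cauchy--Schwarz in the scale index balanced against the localized seminorms on the dyadic annuli (equivalently, argue in frequency, where averaging over an arc of length $\hE$ damps the mode $n$ by $\min(1,\hP/(\abs{n}\hE))$ and $\sum_n\min(1,\hP/(\abs{n}\hE))^2/\abs{n}\simeq 1+\log(\hP/\hE)$). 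This single-logarithm bookkeeping is the crux of the lemma.
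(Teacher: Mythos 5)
Your proof is correct, but it follows a genuinely different route from the paper for the lower bound (the upper bound, via Lemma~\ref{lem:boundbelow} after subtracting the mean, is the same in both). The paper first invokes the Steinbach inf-sup result (Corollary~\ref{cor:Steinbach}, via the auxiliary quasi-uniform grid $\gridauxP$ and its dual $\hgridauxP$) to replace the supremum over all of $\Htracezr$ by a supremum over the discrete space $\tNaux(\bP)$ of continuous piecewise linears, and then combines an $L^1$--$L^\infty$ pairing with the discrete logarithmic inverse inequality $\NORM{\vs}{\LINF(\bP)}\lesssim\sqrt{1+\log(\hP/\hhP)}\,\SNORM{\vs}{1/2,\bP}$; assumption \SRtwopp{} enters through the factor $\sqrt{\sharp(\EdgesP)}$. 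You instead stay with arbitrary test functions $\vs\in\Htracezr$, pair $\eta$ against the vector of edge averages $\bar\vs^{\E}$ by Cauchy--Schwarz, and reduce the whole lemma to the classical estimate $\ABS{\bar\vs^{\E}-\fint_{\bP}\vs}^2\lesssim(1+\log(\hP/\hE))\SNORM{\vs}{1/2,\bP}^2$, summed over the at most $\NSs$ edges (which is where \SRtwopp{} enters for you). That per-edge average bound is a standard lemma from the domain-decomposition literature, and your Fourier sketch of it is essentially a complete proof; you are also right that naive telescoping over dyadic scales only gives $\log^2$ and that the single logarithm requires the sharper bookkeeping. The trade-off: the paper's argument leans on machinery it has already built (the Steinbach projector and the auxiliary dual grid, which it reuses elsewhere), whereas yours is more self-contained and avoids introducing any auxiliary mesh, at the price of having to supply the logarithmic average estimate, which the paper never needs in this form.
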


\begin{proof}
  Thanks to Lemma \ref{lem:boundbelow}, we only need to prove the
  first inequality.
  We consider an auxiliary quasi-uniform mesh $\gridauxP$ on $\bP$
  with mesh size $\hhP$ containing, as nodes,
  all the vertices of $\P$, and we let $\hgridauxP$ denote the dual
  mesh of $\gridauxP$, whose nodes are the midpoints of the elements
  of $\gridauxP$ (see Figure \ref{fig:gaux}).
  
  \begin{figure}\centering
    \includegraphics[width=10cm]{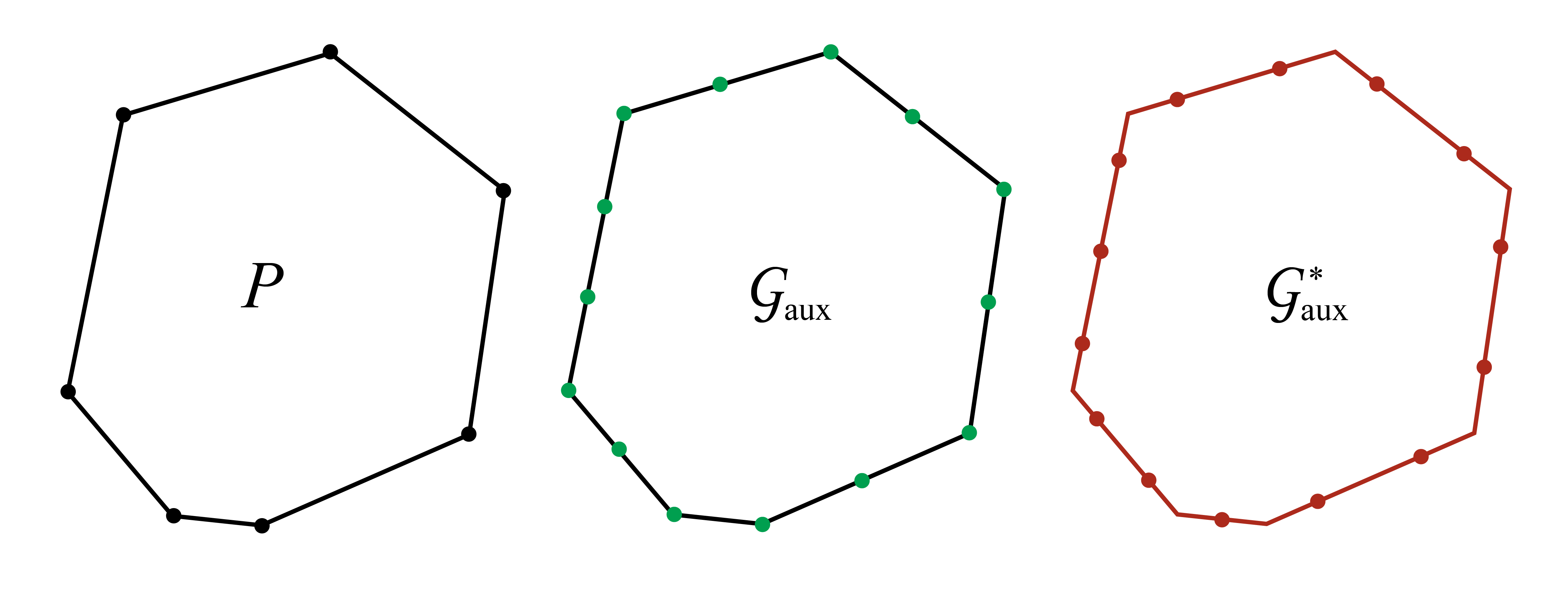} 
    \caption{A polygon $P$ (left), the auxiliary quasi-uniform grid
      $\gridauxP$ (center) and the dual grid $\hgridauxP$
      (right). Remark that the vertices of the polygonal element are
      not nodes of $\hgridauxP$.}\label{fig:gaux}
  \end{figure}
  Then, we let $\Naux(\bP)$ and $\tNaux(\bP)$ denote, respectively,
  the space of piecewise constant functions on the mesh $\gridauxP$,
  and the space of average free continuous piecewise
   linear functions
  on the mesh $\hgridauxP$.
  Observe that $\Nk{0}(\bP)\subseteq N_\text{aux}(\bP)$. We know
  (cf. \cite{Steinbach:2002}, see also Corollary \ref{cor:Steinbach} in the
  following) that for $\eta \in \Nk{0}(\bP)$ it holds that
  \begin{equation}\label{eq:infsupsteinbach}
    \SNORM{\eta}{-1/2,\bP}  \lesssim \sup_{v \in \tNaux(\bP)} \frac{\int_{\bP} \eta v \dS}{\SNORM{v}{1/2,\bP}}.
  \end{equation}
  Now, for $\eta \in \Nk{0}(\bP)$ and $\vs \in \tNaux(\bP)$, applying
  the Cauchy-Schwartz inequality twice, we obtain:
  \begin{align*}
    \int_{\bP}\eta\vs\dS
    &
    \lesssim \NORM{\vs}{\LINF(\bP)}\int_{\bP}\ABS{\eta}\dS
    =        \NORM{\vs}{\LINF(\bP)}\sum_{\E\in \EdgesP}\int_{\E}\ABS{\eta}\dS
    \\[0.5em]
    &
    \lesssim \NORM{\vs}{\LINF(\bP)}\sqrt{\sharp(\EdgesP)}\sqrt{\sum_{\E\in\EdgesP}\left(\int_{\E}\ABS{\eta}\dS\right)^2}
    \lesssim \NORM{\vs}{\LINF(\bP)}\sqrt{\sum_{\E\in \EdgesP}\hE\int_{\E}\ABS{\eta}\dS^2}.
  \end{align*}
  Therefore, plugging this last bound into \eqref{eq:infsupsteinbach},
   we obtain, 
  for every $\eta\in\Nk{0}(\bP)\subseteq\Naux(\bP)$, 
  \begin{align*}
    \SNORM{\eta}{-1/2,\bP}
    \lesssim \sup_{\vs\in \tNaux } \frac{\NORM{\vs}{\LINF(\bP)}}{\SNORM{\vs}{1/2,\bP}} \sqrt{\sum_{\E\in \EdgesP}\hE\int_{\E}\ABS{\eta}^2\dS}.
  \end{align*}
  It remains to bound the $\LINF(\bP)$ norm of $\vs$ in terms of its
  $\HS{\frac12}(\bP)$ seminorm.
 To this aim, we use an inverse inequality on the space of continuous
 piecewise
  linear polynomials 
 $\tNaux(\bP)$,
 cf. \cite[Lemma~3.2(i)]{Bertoluzza:2003:MathComp}, and obtain
 \begin{align}
   \SNORM{\eta}{-1/2,\bP} \lesssim \sqrt{1+\log(\hP/\hhP)} \left(
    \sum_{\E\in \EdgesP}\hE\int_{\E}\ABS{\eta}^2\dS \right)^{1/2}
    =\sqrt{1+\log(\hP/\hhP)} \left ( \sum_{\E\in
      \EdgesP}\hE\NORM{\eta}{0,\E}^2 \right)^{1/2}.
 \end{align}
Remarking that
\[ \SNORM{\eta}{-1/2,\bP} =  \SNORM{\eta - \fint_{\bP} \eta}{-1/2,\bP}\]
 concludes the proof.
\end{proof}
  If we now use the bilinear form $s^0$ defined above  in the design of the
  stabilization bilinear form for the space $\VhkP$, we have that
  \eqref{eq:requirement:S} is satisfied possibly with the loss of
  a logarithmic factor if assumption \TERM{G2a} is violated, as stated
  by the following corollary.
\begin{corollary}
Let assumption  ~\SRtwopp~hold, and let $\tsP$ be
  defined by \eqref{eq:stP:def} with $s^0$ defined by
  \eqref{eq:s0:def}. Then, the dual bilinear form $\sP: \VhkP \times
  \VhkP \to \mathbb{R}$ defined by \eqref{eq:sP:def} verifies, for all $\vs, \ws \in \VhkP\cap
    \KER\big(\PinP{k}\big)$
  \begin{align*}
    a^P(\vs,\vs) \lesssim \sP(\vs,\vs) \lesssim (1 + \log(\hP/\hhP)) a^P(\vs,\vs).
  \end{align*}
\end{corollary}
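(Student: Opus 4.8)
The plan is to first prove, for the particular $\tsP$ at hand, a logarithmically-perturbed version of the equivalence \eqref{eq:equivtilde2}, and then to push it forward to $\sP$ through the abstract duality of Section~\ref{sec:dual:scalar:product}, exactly as in Step~3. Note at the outset that \SRtwopp{} implies \SRthree{} --- it suffices to take $\EdgesPp=\emptyset$ and $\EdgesPpp=\EdgesP$ --- so that Corollaries~\ref{cor:equivaveragefree} and~\ref{cor:splitting} are available.

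For the bound on $\tsP$, split $\eta\in\Nk{k-1}(\bP)$ as $\eta=\eta^0+\eta^\perp$; since $\eta-\eta^0=\eta^\perp$ on each edge, the definition \eqref{eq:stP:def} with $\ss^0=\ssltwo$ reads
\[
\tsP(\eta,\eta)=\ss^0(\eta^0,\eta^0)+\sum_{\E\in\EdgesP}\hE\int_{\E}\ABS{\eta^\perp}^2\dS.
\]
For the upper bound I would use the (log-free) estimate $\ss^0(\eta^0,\eta^0)\lesssim\SNORM{\eta^0}{-1/2,\bP}^2$ from the preceding Lemma for the first term and Corollary~\ref{cor:equivaveragefree} for the second, and recombine with Corollary~\ref{cor:splitting} to obtain $\tsP(\eta,\eta)\lesssim\SNORM{\eta}{-1/2,\bP}^2$. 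For the lower bound I would use the log-degraded estimate $(1+\log(\hP/\hhP))^{-1}\SNORM{\eta^0}{-1/2,\bP}^2\lesssim\ss^0(\eta^0,\eta^0)$ of the preceding Lemma together with Corollary~\ref{cor:equivaveragefree}; since $(1+\log(\hP/\hhP))^{-1}\le1$, the two contributions can be bounded below by the same factor times $\SNORM{\eta^0}{-1/2,\bP}^2$ and $\SNORM{\eta^\perp}{-1/2,\bP}^2$, and Corollary~\ref{cor:splitting} yields $(1+\log(\hP/\hhP))^{-1}\SNORM{\eta}{-1/2,\bP}^2\lesssim\tsP(\eta,\eta)$.

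To transfer this to $\sP$, recall $\SNORM{\gKstar\eta}{-1,\P}\simeq\SNORM{\eta}{-1/2,\bP}$, so that the two displayed inequalities are exactly the hypothesis \eqref{eq:continuity-semicoercivity:s} for $\ss=\tsP$ with continuity constant $\CsM\simeq1$ (log-free, since the upper bound is; the off-diagonal continuity then follows from Cauchy--Schwarz) and semicoercivity constant $\alpha\simeq(1+\log(\hP/\hhP))^{-1}$. The matrix $\matS$ of $\tsP$ and its reflexive generalized inverse $\matr{\Sigma}=\invS$ define $\sP$ via \eqref{eq:sP:def}, while the inf-sup constant $\beta$ (Lemma~\ref{lem:4.1}) and the Poincar\'e constant $\Cpoinc$ are $\hh$-uniform under the always-assumed \SRone{}. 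Since $\VhkP\cap\KER\big(\PinP{k}\big)\subset\ringVhkP=\Wss$, Proposition~\ref{prop:2.7} applies to every such $\vs$, and with $\snormVp{\vs}=\SNORM{\vs}{1,\P}$ it gives
\[
\Cpoinc^{-2}\CsM^{-1}\beta^2\,\SNORM{\vs}{1,\P}^2\le\sP(\vs,\vs)\le\alpha^{-1}\,\SNORM{\vs}{1,\P}^2.
\]
Inserting $\CsM,\beta,\Cpoinc\simeq1$ and $\alpha^{-1}\simeq1+\log(\hP/\hhP)$, and using $\asP(\vs,\vs)=\SNORM{\vs}{1,\P}^2$, produces precisely the asserted chain.

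The only delicate point --- and the one to watch --- is the bookkeeping of where the logarithm enters: it must sit in the semicoercivity constant $\alpha$ of $\tsP$ and not in its continuity constant $\CsM$. Because $\alpha$ and $\CsM$ play asymmetric roles in Proposition~\ref{prop:2.7} (the lower bound for $\sP$ carries $\CsM^{-1}$, the upper bound carries $\alpha^{-1}$), this asymmetry is exactly what confines the logarithm to the upper estimate $\sP\lesssim(1+\log(\hP/\hhP))\,\asP$ while keeping $\asP\lesssim\sP$ log-free. It is therefore essential to verify that $\CsM$ stays $\mathcal{O}(1)$; this is immediate here since the upper bound for $\tsP$ is log-independent and the bilinear continuity follows from $\tsP(\eta,\mu)\le\tsP(\eta,\eta)^{1/2}\tsP(\mu,\mu)^{1/2}$.
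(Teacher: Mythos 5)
Your proposal is correct and follows exactly the route the paper intends (the paper leaves this corollary unproved, presenting it as an immediate consequence of the preceding lemma): combine the log-degraded lower bound and log-free upper bound for $\ss^0$ with Corollaries~\ref{cor:equivaveragefree} and~\ref{cor:splitting} to get \eqref{eq:equivtilde2} with $\CsM\simeq1$ and $\alpha^{-1}\simeq1+\log(\hP\slash\hhP)$, then invoke Proposition~\ref{prop:2.7} as in Step~3. Your remark on the asymmetry of $\CsM$ and $\alpha$ in Proposition~\ref{prop:2.7} correctly identifies why the logarithm lands only on the upper estimate, and the observation that \SRtwopp{} implies \SRthree{} (so that the splitting corollaries apply) is the right way to justify their use.
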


\

\renewcommand{\circle}{\widehat\Gamma}
\newcommand{\map}{\Theta}
\newcommand{\auxmesh}{\mathcal{G}_{\text{aux}}}

\subsection{An optimal stabilization based on a wavelet decomposition}\label{sec:wavelets}
In order to define a bilinear form $\ss^0(\cdot,\cdot)$ satisfying
\eqref{eq:equivtilde} on $\Nk{0}(\bP)$, we can 
exploit some known norm equivalences for the space
$\HS{-\frac12}(\bP)$, based on wavelet decompositions.
On a circle $\circle$ of unitary length, we consider the increasing sequence of spaces
$\{\Vs_j\}_{j=0}^\infty$, where $\Vs_j\subset\LTWO(\circle)$ is the
space of piecewise constant functions on the uniform grid on $\circle$
with mesh size $2^{-j}$.
Let $\{s^j_k\}_{k=0}^{2^j-1}$ denote the nodes of the corresponding
mesh, which we assume to be ordered counter-clockwise.
As $\Vs_j\subset\Vs_{j+1}$, for all level $j$ we  can
decompose ${\eta}_{j+1}\in\Vs_{j+1}$ as
${\eta}_{j+1}={\eta}_j+{\delta}_j$, with ${\eta}_j\in\Vs_j$ obtained
by applying a suitable oblique projector $\Pj$ to $\eta_{j+1}$.
For a given $M > 0$, this gives us a telescopic expansion of all
function in $\Vs_M$ as $\eta_M = \eta_0 + \sum_{j=0}^{M-1} \delta_j$,
and, passing to the limit as $\Ms$ goes to infinity, of all functions
$\eta$ in $\LTWO(\circle)$ as $\eta = \eta_0 +
\sum_{j=0}^{\infty}\delta_j$.
For $\eta\in L^2(\circle)$, we  can  introduce the
vector $\kappav_j(\eta)$ of length $2^j$, that uniquely determines
$Q_j \eta$:
\begin{align*}
\kappav_j(\eta) &:=\{\kappa_{jk}\}_{k=0}^{2^j-1}
\quad\textrm{with}\quad
\kappa_{jk}=2^{j/2}\int_{s^j_k}^{s^j_{k+1}} \Pj \eta\dS.
\end{align*}
As $\Pj$, whose precise definition is out of the scope or this paper, is a projector, for $\eta \in \Vs_j$ we have $\Pj \eta = \eta$ and hence, in such a case, $\kappa_{jk} = \int_{s^j_k}^{s^j_{k+1}} \eta$. 

  Let $\deltav_j(\eta)$ be the vector of coefficients of $\delta_j =
  (Q_{j+1} - Q_j)\eta$ with respect to a suitable basis for the space
  $W_j = (1-\Pj)V_{j+1}$, whose definition is also out of the scope of this paper (see \cite{Cohen-Daubechies-Feauveau:1992} for more details).
  Given $\kappav_{j+1}(\eta)$, we compute
  $\kappav_j(\eta):=\{\kappa_{jk}\}_{k=0}^{2^j-1} $ by subsampled convolution
  with a \emph{low-pass filter} $\lowpass$ of length $L+1$, which is
  strictly related with the projector $\Pj$, and
  $\deltav_j(\eta):=\{\delta_{jk}\}_{k=0}^{2^j-1}$ by subsampled  convolution with
  the \emph{band-pass filter} $\bandpass=[1,-1]$.
  More precisely, we have
\begin{align*}
  \kappa_{jk}   = \sum_{l=0}^L\dfrac{\sqrt{2}}{2} \lowpass(l)\,\kappa_{j+1, 2k+l} \quad \mbox{and}\quad
  \delta_{jk} = \sum_{l=0}^1\dfrac{\sqrt{2}}{2} \bandpass(l)\,\kappa_{j+1, 2k+l} = \dfrac{\sqrt{2}}{2} \left( \kappa_{j+1,2k} - \kappa_{j+1,2k+1} \right).
\end{align*}
In the above computations,  the function $\eta$ is
considered as periodic, so that
we  extend 
the vector $\kappav_{j+1}(\eta)$ as
$\kappa_{j+1,2^{j+1}+k}=\kappa_{j+1,k}$, $k\ge0$, when $2k+l>2^{j+1}-1$. 
For suitable choices of the low pass filter $\lowpass$, the following
norm equivalence holds for all $\eta \in H^{-\frac12}(\circle)$ (see
\cite{Dahmen:1996})
\begin{align*}
  \SNORM{\eta}{-1/2,\circle}^2 \simeq
  \sum_{j=0}^\infty 2^{-j} \NORM{\deltav_j(\eta)}{2}^2,
\end{align*}
where $\NORM{\cdot}{2}$ denotes the Euclidean norm. There are several possible choices for the oblique projector $\Pj$ and the relative low pass filter $\lowpass$ (see \cite{Cohen-Daubechies-Feauveau:1992}). 
In our experiments, we choose the so called \emph{(2,2)-biorthogonal
wavelet}, cf.~\cite{Cohen-Daubechies-Feauveau:1992}, for which the low pass filter $\lowpass$ is
\begin{align*}
  \lowpass = \dfrac{\sqrt{2}}{2}\,
  \bigg[\,\,
    \frac{3}{128},      \,\,-\frac{3}{128},
    \,\,-\frac{11}{64}, \,\,\frac{11}{64},
    \,\,1,              \,\,1,
    \,\,\frac{11}{64},  \,\,-\frac{11}{64},
    \,\,-\frac{3}{128}, \,\,\frac{3}{128}
    \,\,\bigg].
\end{align*}

\

In order to exploit such a norm equivalence, we embed the grid on
$\bP$, whose elements are the edges of $\P$, in a quasi uniform mesh
$\auxmesh$ with $2^M$ elements, where $M$ is the smallest integer such
that $M > \log_2(\sum_{e\in \EdgesP}h_e/(\min_{e\in \EdgesP} h_e))$.

We then consider a continuous piecewise linear
 (in the curvilinear abscissas) 
mapping
$\map:\circle\to\bP$, such that the nodes of the uniform dyadic grid
of $\circle$ with $2^M$ elements are mapped to the nodes of $\auxmesh$.
A change of variable argument yields the scaling relation
\begin{align*}
  \SNORM{\eta}{-1/2,\bP}
  \simeq \hP\SNORM{\eta\circ\map}{-1/2,\circle}.
\end{align*}
Then, for $\eta,\mu \in \Nk{0}(\bP)$, we define
\begin{align}\label{defs0wav}
  \ss^0 (\eta,\mu) = \sswav(\eta,\mu) =  \hP^2 \sum_{j=0}^{M} 2^{-j} \deltav_j(\eta\circ\Theta)^T \deltav_j(\mu\circ\Theta)^T.
\end{align}
The vectors  $\deltav_j(\eta\circ\Theta)$ and
$\deltav_j(\mu\circ\Theta)$ can be computed efficiently by a
\emph{fast wavelet transform}.
We have the following corollary.
\begin{corollary}
  Let  assumption  \SRthree~hold, and let $\tsP$ be
  defined by \eqref{eq:stP:def} with $s^0$ defined by
  \eqref{defs0wav}.
  Then, the dual bilinear form $\sP: \VhkP \times \VhkP \to
  \mathbb{R}$ defined by \eqref{eq:sP:def} verifies,
 for all $\vs, \ws \in \VhkP\cap \KER\big(\PinP{k}\big)$, 
  \begin{align*}
    a^P(\vs,\vs) \lesssim \sP(\vs,\vs) \lesssim  a^P(\vs,\vs).
  \end{align*}
\end{corollary}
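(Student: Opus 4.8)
The plan is to reduce the claimed two-sided bound to the single scalar estimate that the wavelet form $\sswav$ induces, on $\Nk{0}(\bP)$, a seminorm equivalent to $\SNORM{\cdot}{-1/2,\bP}$, and then to invoke the abstract machinery of Section~\ref{sec:dual:scalar:product} together with Steps~1--4. Concretely, once we know that $s^0=\sswav$ satisfies \eqref{eq:cond:s0}, the definition \eqref{eq:stP:def} of $\tsP$ combined with Corollary~\ref{cor:splitting}, whose hypothesis \SRthree{} is exactly the standing assumption here, yields that $\tsP(\eta,\eta)\simeq\SNORM{\eta}{-1/2,\bP}^2$ with the matching continuity estimate, i.e. \eqref{eq:equivtilde2}. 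Through the equivalence $\SNORM{\gKstar\eta}{-1,\P}\simeq\SNORM{\eta}{-1/2,\bP}$ this is precisely \eqref{eq:equivtilde}, and Proposition~\ref{prop:2.7}, applied with the identifications of Step~3, transfers it to the bound \eqref{eq:requirement:S} for the dual form $\sP$ defined by \eqref{eq:sP:def} on $\VhkP\cap\KER(\PinP{k})$, which is the assertion. So the whole corollary rests on verifying \eqref{eq:cond:s0} for $\sswav$.

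First I would show that the finite wavelet sum defining $\sswav$ in \eqref{defs0wav} reproduces, up to the scaling factor, the full $\HS{-1/2}(\circle)$ seminorm. The key observation is that every $\eta\in\Nk{0}(\bP)$ is piecewise constant on $\auxmesh$, so its pullback $\eta\circ\map$ is piecewise constant on the uniform dyadic grid of $\circle$ with $2^M$ cells, that is $\eta\circ\map\in\Vs_M$. Since $\Pj$ projects onto $\Vs_j$ and $\Vs_M\subset\Vs_j$ for $j\geq M$, we have $Q_j(\eta\circ\map)=\eta\circ\map$ and hence $\deltav_j(\eta\circ\map)=0$ for all $j\geq M$. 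Consequently the truncated sum coincides with the full series, and the equivalence quoted from~\cite{Dahmen:1996} gives
\[
\sum_{j=0}^{M}2^{-j}\NORM{\deltav_j(\eta\circ\map)}{2}^2 \simeq \SNORM{\eta\circ\map}{-1/2,\circle}^2,
\]
with constants independent of $M$, hence of the element and of $\hh$.

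Next I would insert the scaling relation $\SNORM{\eta}{-1/2,\bP}\simeq\hP\SNORM{\eta\circ\map}{-1/2,\circle}$ obtained from the change of variable through $\map$, to conclude
\[
\sswav(\eta,\eta)=\hP^2\sum_{j=0}^{M}2^{-j}\NORM{\deltav_j(\eta\circ\map)}{2}^2 \simeq \hP^2\SNORM{\eta\circ\map}{-1/2,\circle}^2 \simeq \SNORM{\eta}{-1/2,\bP}^2 .
\]
The coercivity half of \eqref{eq:cond:s0} is the lower bound here; the continuity half follows from this equivalence together with the Cauchy--Schwarz inequality applied to the positive semidefinite form $\sswav$. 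This finishes the verification of \eqref{eq:cond:s0} and, through the reduction above, the proof.

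The hard part, conceptually, is the level-uniformity of the wavelet norm equivalence: it is precisely the fact that the constants in the equivalence from~\cite{Dahmen:1996} do not degrade with $M$ that removes the logarithmic factor present in the weighted $\LTWO$ stabilization of Section~\ref{sec:quasi:optimal}. One must also check that the quasi-uniformity of $\auxmesh$ and the piecewise-linearity of $\map$ suffice to make the scaling relation hold with constants independent of $\hh$ and of the possibly very disparate edge lengths permitted by \SRthree; this is where the choice $M>\log_2\big(\sum_{\E\in\EdgesP}\hE/\min_{\E\in\EdgesP}\hE\big)$ enters, guaranteeing that every edge is resolved by the dyadic grid. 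Everything else is bookkeeping within the abstract framework already in place.
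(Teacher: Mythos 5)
Your argument is correct and follows exactly the route the paper intends (the corollary is stated there without an explicit proof, as an immediate consequence of the Dahmen norm equivalence, the scaling relation through $\map$, Corollary~\ref{cor:splitting}, and the duality machinery of Section~\ref{sec:dual:scalar:product} via Proposition~\ref{prop:2.7}). The one detail you make explicit that the paper leaves implicit --- that $\eta\circ\map\in\Vs_M$ forces $\deltav_j(\eta\circ\map)=0$ for $j\geq M$, so the truncated sum in \eqref{defs0wav} coincides with the full series and the equivalence constants are independent of $M$ --- is precisely the right point to isolate, since it is what yields optimality without the logarithmic loss of Section~\ref{sec:quasi:optimal}.
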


\

\subsection{An optimal stabilization based on a known dual space}
  In the spirit of Remark \ref{rem:twicedual}, we can look at
  $\Nk{0}(\bP)$ as the stable dual space of a third, explicitly known
  space $\tNk{0}(\bP)\subset\HS{\frac12}(\bP)$.
  Then, we can construct an optimal stabilizing form $s^0$ on
  $\Nk{0}(\bP)$ if we are able to construct a bilinear form on
  $\tNk{0}(\bP)$ that is spectrally equivalent to the
  $\HS{\frac12}(\bP)$ semi-inner product.
A key ingredient in the construction is an oblique projector onto the
continuous piecewise linears, studied by Steinbach in \cite{Steinbach:2002}.

Let $\G$ and $\hG$ denote, respectively, a grid on $\bP$, and the dual grid, whose nodes are the
midpoints of the elements of $\G$.
We let $\Const$ and $\Lin$ denote the space of piecewise constant
functions on $\G$ and space of continuous linear functions on $\hG$.
We can define the projector $\widetilde{\Qs}:\LTWO(\bP)\to\Lin$ as
\begin{align*}
  \bil{\widetilde{\Qs}\vs-\vs}{\wsh} = 0
  \quad\forall\wsh\in\Const.
\end{align*}
The following theorem holds.
\begin{theorem}\label{thm:steinbach}
  Assume that there exists a constant $\cs'\geq 1$ such that for any
  two adjacent intervals $\E$ and $\Ep$ in $\G$ it holds that
  \begin{align*}
    \frac {\hE}{\hEp} \leq \cs'.
  \end{align*}
  Then, if $\cs'< \constSteinbach \approx 3.672688104237926$, the
  projector $\widetilde{\Qs}_{\hh}$ is bounded in $\HS{\frac12}(\bP)$:
  \begin{align*}
    \NORM{ \widetilde{\Qs}\vs}{-1/2,\bP} \lesssim
    \NORM{\vs}{1/2,\bP},
  \end{align*}
  the implicit constant in the inequality depending on $\cs'$.
\end{theorem}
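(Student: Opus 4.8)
The plan is to establish the asserted $\HS{\frac12}(\bP)$-stability of $\Qtilde$ by real interpolation between $\LTWO$ and $\HONE$, the crux being an $\LTWO$-stability bound whose constant stays finite precisely when the adjacent mesh ratio is below $\constSteinbach$. First I would record two elementary facts. The operator $\Qtilde$ is a projection onto $\Lin$, so $\Qtilde w_h=w_h$ for every $w_h\in\Lin$; in particular $\Qtilde$ reproduces constants, since constants belong to $\Lin$. Moreover its defining relation $\bil{\Qtilde\vs}{w_h}=\bil{\vs}{w_h}$ for all $w_h\in\Const$ is equivalent to preservation of the mean on every primal element, i.e.\ $\int_E\Qtilde\vs=\int_E\vs$ for all $E\in\G$.

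Second, the heart of the matter is $\LTWO$-stability. Expanding $\Qtilde\vs=\sum_i c_i\phi_i$ in the nodal hat basis $\{\phi_i\}$ of $\Lin$, whose nodes sit at the midpoints of the primal elements, the mean-preservation conditions become a square linear system $\matM\,\vettore{c}=\vettore{b}$ with $\matM_{Ei}=\int_E\phi_i$ and $b_E=\int_E\vs$. Since each $\phi_i$ meets only the primal element containing its node together with the two neighbours, $\matM$ is tridiagonal (circulant on the closed curve $\bP$), and after scaling row $E$ by $\hE^{-1}$ its off-diagonal-to-diagonal ratios are controlled by the neighbouring length ratios $\hE/\hEp$. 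Since the Gram matrices of the two scaled bases are uniformly well conditioned under the local ratio bound, $\Qtilde$ is $\LTWO$-bounded if and only if the suitably scaled $\matM^{-1}$ is bounded uniformly in the mesh. A spectral analysis of this tridiagonal (circulant) structure shows that such a uniform bound holds, with a constant depending only on $\cs'$, exactly as long as $\cs'<\constSteinbach$. Gershgorin-type reasoning already gives the qualitative picture, but pinning down the sharp threshold $\constSteinbach\approx 3.672688104237926$ as the value at which the extremal eigenvalue of $\matM$ first reaches $0$ is the quantitative core of the argument, and this is where I expect the main difficulty to lie.

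Third, I would upgrade to $\HONE$-stability. Under the ratio condition the grids $\G$ and $\hG$ are locally quasi-uniform and $\matM$ is diagonally dominant, so $\matM^{-1}$ has exponentially decaying entries and $\Qtilde$ is quasi-local. Choosing a Scott--Zhang/Cl\'ement interpolant $I_h$ into $\Lin$ that is $\HONE$-stable and satisfies the local approximation estimate $\NORM{\vs-I_h\vs}{0,\omega}\lesssim h_\omega\,\SNORM{\vs}{1,\tilde\omega}$, I would write $\Qtilde\vs-I_h\vs=\Qtilde(\vs-I_h\vs)$, using $\Qtilde I_h\vs=I_h\vs$. The $\LTWO$-stability of the previous step together with a local inverse inequality on $\Lin$ (with the local dual mesh size) then bounds $\SNORM{\Qtilde\vs-I_h\vs}{1,\bP}\lesssim\SNORM{\vs}{1,\bP}$, and the triangle inequality with $\SNORM{I_h\vs}{1,\bP}\lesssim\SNORM{\vs}{1,\bP}$ gives $\SNORM{\Qtilde\vs}{1,\bP}\lesssim\SNORM{\vs}{1,\bP}$, all constants depending only on $\cs'$.

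Finally, since $\HS{\frac12}(\bP)=[\LTWO(\bP),\HONE(\bP)]_{1/2}$ with equivalent norms on the closed Lipschitz curve $\bP$, real interpolation of the $\LTWO$- and $\HONE$-stability bounds yields at once the asserted $\NORM{\Qtilde\vs}{1/2,\bP}\lesssim\NORM{\vs}{1/2,\bP}$, with implicit constant depending only on $\cs'$. I would stress that the interpolation step is insensitive to grading of the mesh: grading enters only the $\HONE$-bound, which is why that bound must be produced through the local, quasi-local estimates above rather than a naive global inverse inequality. The genuinely delicate point throughout remains the determination of the exact constant $\constSteinbach$ from the tridiagonal mass-matrix analysis.
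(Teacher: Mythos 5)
Your plan does not match the mechanism by which the constant $\constSteinbach$ actually arises, and as written it contains a genuine gap. The paper's proof (following \cite{Steinbach:2002}, Appendix C, with the roles of $\G$ and $\hG$ switched) does not go through $L^2$- and $H^1$-stability plus interpolation: it establishes $H^{s}$-boundedness directly from a \emph{local} positive-definiteness condition on the $2\times2$ Gramian matrices $\Gl$, conjugated by the diagonal scalings $\matH_\ell=\diag(\widehat{\hh}_{\ell}^{\,s},\widehat{\hh}_{\ell+1}^{\,s})$ --- i.e.\ stability in $\widehat{\hh}^{\pm s}$-weighted $L^2$ norms. The threshold $\constSteinbach\approx3.6727$ is precisely the largest adjacent-length ratio for which the symmetrized local matrix $\matM_\ell$ at $s=\tfrac12$ remains positive definite (the condition $g(a)<1$ with $a_0\approx0.214$). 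You instead locate the ``quantitative core'' in the $L^2$-stability of the global (cyclically tridiagonal, not circulant for nonuniform meshes) mass system. That is the wrong place: the $L^2$ bound holds under a much weaker ratio condition (the $s=0$ version of the same local criterion tolerates considerably larger ratios), and no analysis of the \emph{unscaled} mass matrix can produce the number $3.6727$, which is tied to the $\widehat{\hh}^{\pm1/2}$ scaling.

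The second problem is the interpolation step itself. To conclude via $\HS{\frac12}=[\LTWO,\HONE]_{1/2}$ you must prove $\HONE$-stability of $\Qtilde$ for all $\cs'<\constSteinbach$. The local criterion at $s=1$ is strictly more restrictive than at $s=\tfrac12$ (it already fails for ratios around $2.5$), so the route through $\HONE$ either yields a theorem with a smaller admissible $\cs'$ or requires an $\HONE$-stability argument valid up to $3.6727$ that you have not supplied; your Scott--Zhang/quasi-locality sketch gives no control on where such an argument would break down. Working directly at $s=\tfrac12$, as the paper does, is exactly what makes the stated constant attainable, and the ``main difficulty'' you anticipate reduces there to an explicit eigenvalue computation for a $2\times2$ matrix rather than a sharp spectral bound for a global tridiagonal system.
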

The proof of Theorem \ref{thm:steinbach}, which also yields the value of the constant $c_0$,
 is the same as the proof of
 the analogous result in
\cite[Theorem~4.3 and~Section 5]{Steinbach:2002}, where the roles of the
grids $\mathcal{G}$ and $\hG$ are, however, switched. Nevertheless,
the arguments therein apply unchanged to the present case, though resulting
 in a different value of $\cs_0$, as detailed in
Appendix \ref{appendix:Steinbach}.
Using $\widetilde{\Qs}_{\hh}$ as a Fortin projector, we find the
result stated in the following corollary.
\begin{corollary}
  \label{cor:Steinbach}
  Under the assumptions of Theorem \ref{thm:steinbach}, the following
  inf-sup conditions hold:
  \begin{align*}
    \inf_{\ws\in\Const}\sup_{\vs\in\Lin  }\frac{\int_{\bP}\ws\vs}{\NORM{\vs}{1/2,\bP}\,\NORM{\ws}{-1/2,\bP}}\gtrsim 1,
    \qquad
    \inf_{\vs\in\Lin  }\sup_{\ws\in\Const}\frac{\int_{\bP}\ws\vs}{\NORM{\vs}{1/2,\bP}\,\NORM{\ws}{-1/2,\bP}}\gtrsim 1.
  \end{align*}
\end{corollary}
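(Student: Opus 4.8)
The plan is to read the statement as an instance of the classical Fortin criterion for a discrete inf-sup condition, with $\widetilde{\Qs}$ playing the role of the Fortin operator. The continuous inf-sup condition underlying everything is nothing but the duality characterization of the $\HS{-\frac12}(\bP)$ norm: for every $\ws\in\HS{-\frac12}(\bP)$ one has $\NORM{\ws}{-1/2,\bP}=\sup_{\vs\in\HS{1/2}(\bP)}\bil{\ws}{\vs}/\NORM{\vs}{1/2,\bP}$, where $\bil{\ws}{\vs}=\int_{\bP}\ws\vs$. The defining relation $\bil{\widetilde{\Qs}\vs-\vs}{\wsh}=0$ for all $\wsh\in\Const$ says exactly that $\int_{\bP}\ws\,\widetilde{\Qs}\vs=\int_{\bP}\ws\,\vs$ for every $\ws\in\Const$, i.e. $\widetilde{\Qs}$ is a Fortin projector for the pairing $b(\ws,\vs)=\int_{\bP}\ws\vs$ on $\Const\times\Lin$, while Theorem~\ref{thm:steinbach} supplies its $\HS{1/2}(\bP)$-stability, $\NORM{\widetilde{\Qs}\vs}{1/2,\bP}\lesssim\NORM{\vs}{1/2,\bP}$ (with constant depending on $\cs'$).

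First I would fix $\ws\in\Const$ and note that, since $\widetilde{\Qs}\vs\in\Lin$ for every $\vs\in\HS{1/2}(\bP)$, the Fortin identity together with the stability bound yields
\[
\NORM{\ws}{-1/2,\bP}
=\sup_{\vs\in\HS{1/2}(\bP)}\frac{\int_{\bP}\ws\,\vs}{\NORM{\vs}{1/2,\bP}}
=\sup_{\vs\in\HS{1/2}(\bP)}\frac{\int_{\bP}\ws\,\widetilde{\Qs}\vs}{\NORM{\vs}{1/2,\bP}}
\lesssim\sup_{\vs\in\HS{1/2}(\bP)}\frac{\int_{\bP}\ws\,\widetilde{\Qs}\vs}{\NORM{\widetilde{\Qs}\vs}{1/2,\bP}}
\leq\sup_{\phi\in\Lin}\frac{\int_{\bP}\ws\,\phi}{\NORM{\phi}{1/2,\bP}},
\]
where in the third step one restricts, without loss of generality, to the $\vs$ making the numerator nonnegative, so that $\NORM{\vs}{1/2,\bP}\gtrsim\NORM{\widetilde{\Qs}\vs}{1/2,\bP}$ may be used. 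Rearranging and taking the infimum over $\ws\in\Const$ gives precisely the first inf-sup condition, with a constant depending only on the $\HS{1/2}(\bP)$-stability constant of $\widetilde{\Qs}$, hence through Theorem~\ref{thm:steinbach} on $\cs'$.

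To obtain the second inf-sup condition I would invoke the dimension argument already recorded after~\eqref{eq:abstractinfsup}: once the two finite-dimensional spaces have equal dimension, either inf-sup condition implies the other. Here $\dims(\Const)$ equals the number of intervals of $\G$, while the dual grid $\hG$ has exactly one node (the midpoint) per interval of $\G$, and continuous piecewise linears on the closed curve $\bP$ have as many degrees of freedom as nodes; hence $\dims(\Const)=\dims(\Lin)$ and the second inequality follows from the first.

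The only genuinely nontrivial input is the $\HS{1/2}(\bP)$-stability of $\widetilde{\Qs}$, which is the content of Theorem~\ref{thm:steinbach} and is where the threshold $\cs'<\constSteinbach$ enters; the remainder is the routine Fortin estimate and a dimension count. The step that requires care — and the one I expect to be the main obstacle to a fully clean write-up — is the bookkeeping of the norm conventions: the duality characterization of $\NORM{\cdot}{-1/2,\bP}$ must be stated against the full $\HS{1/2}(\bP)$ norm, matched consistently with the norm in which $\widetilde{\Qs}$ is stable, so that the resulting constant is genuinely independent of $\hP$ and of the number of edges of $\bP$.
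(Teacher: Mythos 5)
Your proposal is correct and takes essentially the same route as the paper, which proves the corollary in a single line by invoking $\widetilde{\Qs}$ as a Fortin projector: you have simply written out the standard Fortin estimate that this compresses, together with the dimension count $\dims(\Const)=\dims(\Lin)$ and the fact, recorded in the paper after \eqref{eq:abstractinfsup}, that for equal-dimensional spaces either inf-sup condition implies the other. No gaps.
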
  

\

Assume now that the tessellation satisfies Assumption \SRtwop~with a
constant $\gamma_2<\constSteinbach$, where $\constSteinbach$ is given
in Theorem \ref{thm:steinbach}.
 We let  $\tNk{0}(\bP)$ denote
the space of piecewise  linear (in the arclength ascissa on the boundary) functions 
 on the grid $\hG$ whose
nodes are the midpoints of the edges of $\P$.
Corollary \ref{cor:Steinbach} implies the inf-sup
  condition
\begin{align*}
  \inf_{\eta\in\Nk{0}(\bP)}\sup_{\phi\in\tNk{0}(\bP)}
  \frac{\int_{\bP}\lambda\phi}{\NORM{\lambda}{-1/2,\bP}\,\NORM{\phi}{1/2,\bP}}
  \gtrsim 1.
\end{align*}
Once again, we resort to the duality technique presented in
Section~\ref{sec:dual:scalar:product}
by setting, this time,
\begin{itemize}[parsep=2mm,leftmargin=*,labelindent=\parindent]
\item  $\Vs=\HS{\frac12}(\bP)$, $\Vsp=\HS{-\frac12}(\bP)$,
\item $\Ws=\widetilde{N}_0(\bP)$, $\Wss=\Nk{0}(\bP)$,
\item $\Wbar=\widehat{\Ws}^*= \mathbb{P}_0(\bP)$.
\end{itemize}

We take the projection $\Pbar$ as the $\LTWO(\bP)$-orthogonal
projection onto the constants.
We need to define a bilinear form $\sPlin$ on the known space
$\tNk{0}(\bP)$ satisfying
\begin{align}
  \sPlin(\phi,\phi) \simeq \SNORM{\phi}{1/2,\bP}^2,
  \qquad
  \sPlin(\phi,\psi) \lesssim \snorm{\phi}{1/2,\bP}\,\SNORM{\psi}{1/2,\bP}.
  \label{eq:requirement:Stilde}
\end{align}
The problem of defining bilinear forms
satisfying~\eqref{eq:requirement:Stilde} on the space of  continuous piecewise
linear functions has been addressed in other numerical frameworks,
such as, for example, the one of domain decomposition methods (see,
for instance, \cite{Bramble-Pasciak-Schatz:1986}) or the 
stabilization of the conforming VEM
method (see \cite{BeiraodaVeiga-Lovadina-Russo:2017}).
We will present two possible options at the end of this Section.

Assume now to have such a bilinear form. Let $\basedofs_0 = \{\zeta^e_0,\ e\in \EdgesP \}$, with $\zeta_0^e$ denoting the characteristic function of the edge $e$, denote the natural basis for $\Nk{0}(\bP)$, and 
$\Baselin_0=\{ \tphi_{\Ei},
\ i=1,\cdots,\#\EdgesP \}$ be the basis for $\tNk{0}(\bP)$, dual to
$\basedofs_0$, that is, $\tphi_{\Ei}$ is the
unique piecewise linear function on the dual grid $\hG$ such that
\begin{align*}
  \int_{\bP}\tphi_{\Ei}\zeta^{\Ej}_{0}\dS = \int_{\Ej}\tphi_{\Ei}\dS =
  \delta_{i,j}, \qquad j = 1,\cdots,\#\EdgesP.
\end{align*}
We define the relative stiffness matrix
\begin{align*}
  \widetilde{\matrSigma} = (\widetilde{\sigma}_{i,j}),
  \qquad
  \widetilde{\sigma}_{i,j} = \sPlin(\tphi_{\Ei},\tphi_{\Ej}).
\end{align*}
Let $\matS_0 = \invtSigma = (s^0_{i,j})$. We can define
$s^0:\Nk{0}(\bP) \times \Nk{0}(\bP) \to \REAL$ by
setting
\begin{equation}\label{eq:s0:def3}
  s^0(\zeta_{\Ei},\zeta_{\Ej})
  = s^0_{i,j}, \qquad i,j
  = 1,\cdots,\#\EdgesP.
\end{equation}
Proposition \ref{prop:2.7} yields, for all $\eta \in \Nk{0}(\bP)$ 
\begin{align*}
  s^0(\eta,\eta) \simeq | \eta |_{-1/2,\bP}^2.
\end{align*}

We have the following corollary.
\begin{corollary}
  Let  assumption  \SRtwop~hold with $\gamma_2 < c_0$, $c_0$ given by Theorem \ref{thm:steinbach}, and let $\tsP$ be
  defined by \eqref{eq:stP:def} with $s^0$ defined by
  \eqref{eq:s0:def3}.
  Then, the dual bilinear form $\sP: \VhkP \times \VhkP \to
  \mathbb{R}$ defined by \eqref{eq:sP:def} verifies, for all $\vs, \ws
  \in \VhkP\cap \KER\big(\PinP{k}\big)$
  \begin{align*}
    a^P(\vs,\vs) \lesssim \sP(\vs,\vs) \lesssim  a^P(\vs,\vs).
  \end{align*}
\end{corollary}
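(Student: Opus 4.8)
The plan is to recognize this corollary as the capstone of the chain of reductions assembled in Steps~1--4, specialized to the Steinbach-based construction of $s^0$, so that essentially no new estimate is required: the claim follows by composing previously established equivalences and by checking that the hypotheses line up. The target conclusion $a^P(\vs,\vs)\lesssim\sP(\vs,\vs)\lesssim a^P(\vs,\vs)$ on $\VhkP\cap\KER(\PinP{k})$ is precisely requirement~\eqref{eq:requirement:S}, and by the Step~3 transfer together with Proposition~\ref{prop:2.7} it suffices to verify that the auxiliary form $\tsP$ defined by~\eqref{eq:stP:def}, with $s^0$ given by~\eqref{eq:s0:def3}, satisfies the dual equivalence~\eqref{eq:equivtilde}.

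First I would reduce~\eqref{eq:equivtilde} to a single ingredient. Assumption~\SRtwop{} trivially implies~\SRthree{} (take $\EdgesPpp=\emptyset$, hence $\NSs=0$), so Corollary~\ref{cor:splitting} applies and gives $\SNORM{\eta}{-1/2,\bP}^2 \simeq \SNORM{\eta^0}{-1/2,\bP}^2 + \sum_{\E\in\EdgesP}\hE\int_\E\ABS{\eta-\eta^0}^2\dS$. Inserting the definition~\eqref{eq:stP:def} of $\tsP$ into this splitting, and recalling $\SNORM{\gKstar\eta}{-1,\P}\simeq\SNORM{\eta}{-1/2,\bP}$, the equivalence~\eqref{eq:equivtilde} follows as soon as the building block $s^0$ obeys condition~\eqref{eq:cond:s0}; the cross-term bound then comes for free from the semi-inner-product (Cauchy--Schwarz) structure of $\tsP$.

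It therefore remains to establish~\eqref{eq:cond:s0} for $s^0$, and this is where the threshold enters. The hard part, and the only place the hypothesis $\gamma_2<\constSteinbach$ is genuinely used, is securing the inf-sup condition~\eqref{eq:abstractinfsup} for the inner duality with $\Vs=\HS{1/2}(\bP)$, $\Vsp=\HS{-1/2}(\bP)$, $\Ws=\tNk{0}(\bP)$, $\Wss=\Nk{0}(\bP)$. Since~\SRtwop{} bounds the ratio of any two adjacent boundary edges by $\gamma_2<\constSteinbach$, the grid $\G=\EdgesP$ and its dual $\hG$ satisfy the hypotheses of Theorem~\ref{thm:steinbach}, so the Steinbach projector is bounded in $\HS{1/2}(\bP)$; Corollary~\ref{cor:Steinbach} then delivers exactly the two inf-sup conditions between $\Nk{0}(\bP)$ and $\tNk{0}(\bP)$ that constitute~\eqref{eq:abstractinfsup}. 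With this inf-sup in hand and with $\sPlin$ chosen to satisfy~\eqref{eq:requirement:Stilde}, the duality construction of Remark~\ref{rem:twicedual} applies: taking $\matS_0=\invtSigma$, the reflexive generalized inverse of the stiffness matrix of $\sPlin$, Proposition~\ref{prop:2.7} yields $s^0(\eta,\eta)\simeq\SNORM{\eta}{-1/2,\bP}^2$ together with the matching cross-term bound, i.e.\ precisely~\eqref{eq:cond:s0}.

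Finally I would assemble the pieces: \eqref{eq:cond:s0} feeds the splitting of Corollary~\ref{cor:splitting} to produce~\eqref{eq:equivtilde} for $\tsP$, which through the Step~3 transfer and Proposition~\ref{prop:2.7} gives that the dual form $\sP$ of~\eqref{eq:sP:def} satisfies~\eqref{eq:requirement:S}, i.e.\ the claimed two-sided bound, with implicit constants depending only on the mesh-regularity constants (through $\beta$ and the Poincar\'e constant), on the spectral equivalence constants of $\sPlin$ in~\eqref{eq:requirement:Stilde}, and on the polynomial degree $k$. The only substantive obstacle is the inf-sup step routed through Theorem~\ref{thm:steinbach}; everything else is bookkeeping across the already-proven lemmas.
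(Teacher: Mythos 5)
Your proposal is correct and follows essentially the same route the paper takes (the paper gives no separate proof of this corollary precisely because it is the composition you describe): assumption \SRtwop{} with $\gamma_2<\constSteinbach$ feeds Theorem~\ref{thm:steinbach} and Corollary~\ref{cor:Steinbach} to secure the inf-sup between $\Nk{0}(\bP)$ and $\tNk{0}(\bP)$, Proposition~\ref{prop:2.7} (in the reversed roles of Remark~\ref{rem:twicedual}) then gives~\eqref{eq:cond:s0} for $s^0$, Corollary~\ref{cor:splitting} upgrades this to~\eqref{eq:equivtilde} for $\tsP$, and the Step~3 transfer together with Proposition~\ref{prop:2.7} and the inclusion $\VhkP\cap\KER(\PinP{k})\subset\ringVhkP$ yields~\eqref{eq:requirement:S}. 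The chain of reductions and the identification of where the threshold $\gamma_2<\constSteinbach$ is genuinely used are exactly as in the paper.
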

As  observed in Remark
\ref{rem:twicedual}, we find that $\invS_0=\invinvtSigma=\tSigma$, and,
consequently, the bilinear form $\sP$ mentioned in the above
corollary, takes the form
\begin{align*}
  \sP(\us,\vs)
  = \sum_{i,j=1}^{\#\EdgesP}\widetilde\sigma_{i,j}
  \bil{\zeta^{\Ei}_{0}}{\us}\,\bil{\zeta^{\Ej}_{0}}{\vs}  + \sum_{\E}\hE^{-1}\int_{\E}\portedge\us\,\portedge\vs\dS.
\end{align*}

\begin{remark}
Analogously to what we proposed in Section \ref{sec:wavelets},  if the tessellation does not  satisfy the gradedness Assumption \SRtwop~with
  $\gamma_2<\constSteinbach$, it is always possible to embed the mesh
  induced on $\bP$ by the vertexes of $\P$ in a finer mesh satisfying
  the assumptions of Theorem \ref{thm:steinbach}.
Then, we can define
  $\tsP$ on the space of piecewise constants on such a finer grid and
  then restrict it to $\Nk{0}(\bP)$.
In such a case, the
  finer mesh is only needed for the computation of low order component
  of the stabilization bilinear form.
\end{remark}

We conclude this section by recalling two possibilities for the definition of the bilinear form $\sPlin$.
We let
\begin{align*}
  \lb(u,w) = \int_{\bP} u' v' \dS, \qquad 
\end{align*}
denote the bilinear form relative to the Laplace-Beltrami operator on
$\bP$, with  $\tmatR=(r_{i,j})$,
$\rs_{i,j}=\lb(\tphi_{\Ei},\tphi_{\Ej})$, being
the stiffness matrix relative to its Galerkin discretization, and $\tmatM = (m_{i,j})$, $m_{i,j} = \int_{\bP} \tphi_{\Ei}\tphi_{\Ej}$  the corresponding mass matrix.
The first possibility is to define $\sPlin$ as the scaled
Laplace-Beltrami operator, which corresponds to setting
\begin{align*}
  \widetilde{\matrSigma} = h_P \tmatR.
\end{align*}
This bilinear form has been proposed in \cite{BeiraodaVeiga-Lovadina-Russo:2017} as a stabilization for the conforming virtual element methods, where the traces of virtual functions on the boundary of the elements are continuous piecewise linear polynomials.	We let $\sslb:\Nk{0}(\bP) \times \Nk{0}(\bP)  \to \REAL$ denote the bilinear form resulting from such a choice.
	
	\medskip

The second  possibility, originally proposed in the domain decomposition framework (cf. \cite{Bramble-Pasciak-Schatz:1986} and \cite[page 1110]{Bjorstad:1986:IMS}),  is to define
$\sPlin$ as the square root of the Laplace-Beltrami operator, which
correspond to setting
\begin{align*}
 \widetilde{\matrSigma} =\tmatM^{1/2}(\tmatM^{-1/2} \tmatR \tmatM^{-1/2})^{1/2}\tmatM^{1/2}.
\end{align*}
Remark that if the grid $\hG$ is quasi-uniform, then, using mass lumping, the contributes of the mass matrix $\tmatM$ cancel out and the above definition reduces to $ \widetilde{\matrSigma}=  \tmatR ^{1/2}$.  We let $\ssrlb:\Nk{0}(\bP) \times \Nk{0}(\bP)  \to \REAL$ denote the bilinear form resulting from such a choice.

\section{Implementation and Numerical Experiments}
\label{sec:numerical} 

\subsection{Construction of the stabilizing bilinear form} 
Before presenting the numerical tests, we give some detail on the
algebraic realization of the bilinear form $\sP(\cdot,\cdot)$.

For convenience of exposition, we introduce a local numbering of the
elemental edges, e.g., we denote the $\ell$-th edge in $\EdgesP$ by
$\E_{\ell}$ with subindex $\ell$ running from $1$ to
$N=\sharp(\EdgesP)$, the cardinality of the edge set $\EdgesP$. We
select the boundary degrees of freedom \TERM{D1} on each edge of $\bP$
so that $\basedofs$ contains the subset
$\basedofs_0=\big\{\zeta_0^{\E_\ell},\,\ell=1,\cdots,N\}$, forming a
basis of $\Nk{0}(\bP)$, with the remaining basis functions having zero
average on the elemental edges. The basis $\basedofs$ will then have
the form \(\basedofs = \basedofs_0 \cup \{\zeta^{\E_\ell}_i,
\ \ell=1,\cdots,N, \ i = 1,\cdots,k-1 \}\), where, for $\ell =
1,\cdots,N$, the set $\{\zeta^{\E_\ell}_i, \ \ i = 1,\cdots,k-1 \}$ is
a basis for the space of average free polynomials of order at most
$k-1$ on $\E_\ell$.

Let $\matS_0=(\ss_{\ell,\ell'})$ be the matrix having coefficients
$\ss_{\ell,\ell'}=\ss^0(\zeta^{\E_\ell}_0,\zeta^{\E_{\ell'}}_0)$.
This choice of the basis implies that matrix $\matS$ is block diagonal
up to a permutation of its rows and columns, which corresponds to a
suitable renumbering of the edge basis functions, and takes the form
\begin{align*}
  \matS = \left(
  \begin{array}{cccc}
    \matS_0 & \matr 0           & \cdots & \matr 0 \\
    \matr 0 & \hh_{\E_1} \matM_1 & \cdots & \matr 0 \\
    \vdots  & \vdots            & \ddots & \vdots \\ 
    \matr 0 & \matr 0           & \cdots & \hh_{\E_N} \matM_N \\
  \end{array}
  \right),
\end{align*}
where $\matM_{\ell}$ is the mass matrix for the space of average free
polynomials of order at most $k-1$ on the edge $\E_{\ell}$,
which is given by the edge integral
\begin{align*}
  \restrict{\matM_{\ell}}{i,j} = \int_{\E_{\ell}}\zeta^{\E_{\ell}}_{i}\zeta^{\E_{\ell}}_{j}\dS
  \quad
  i,j=1,\ldots,k-1.
\end{align*}
As the matrices $\matM_{\ell}$ are nonsingular, it is not difficult to
check that the reflexive generalized inverse $\invS$ of $\matS$, as
defined in Section \ref{sec:dual:scalar:product}, has a block diagonal
structure and it is given by
\begin{align*}
  \invS
  = \left(
  \begin{array}{cccc}
    \invS_0  & \matr 0                   & \cdots  & \matr 0 \\
    \matr 0  & \hh_{\E_1}^{-1} \matM^{-1}_1 & \cdots  & \matr 0 \\
    \vdots   & \vdots                    & \ddots  & \vdots \\ 
    \matr 0  & \matr 0                   & \cdots  & \hh_{\E_N}^{-1} \matM^{-1}_N 
  \end{array}
  \right).
\end{align*}

\begin{figure}
  \centering
  \subfigure{\includegraphics[width=0.3\textwidth]{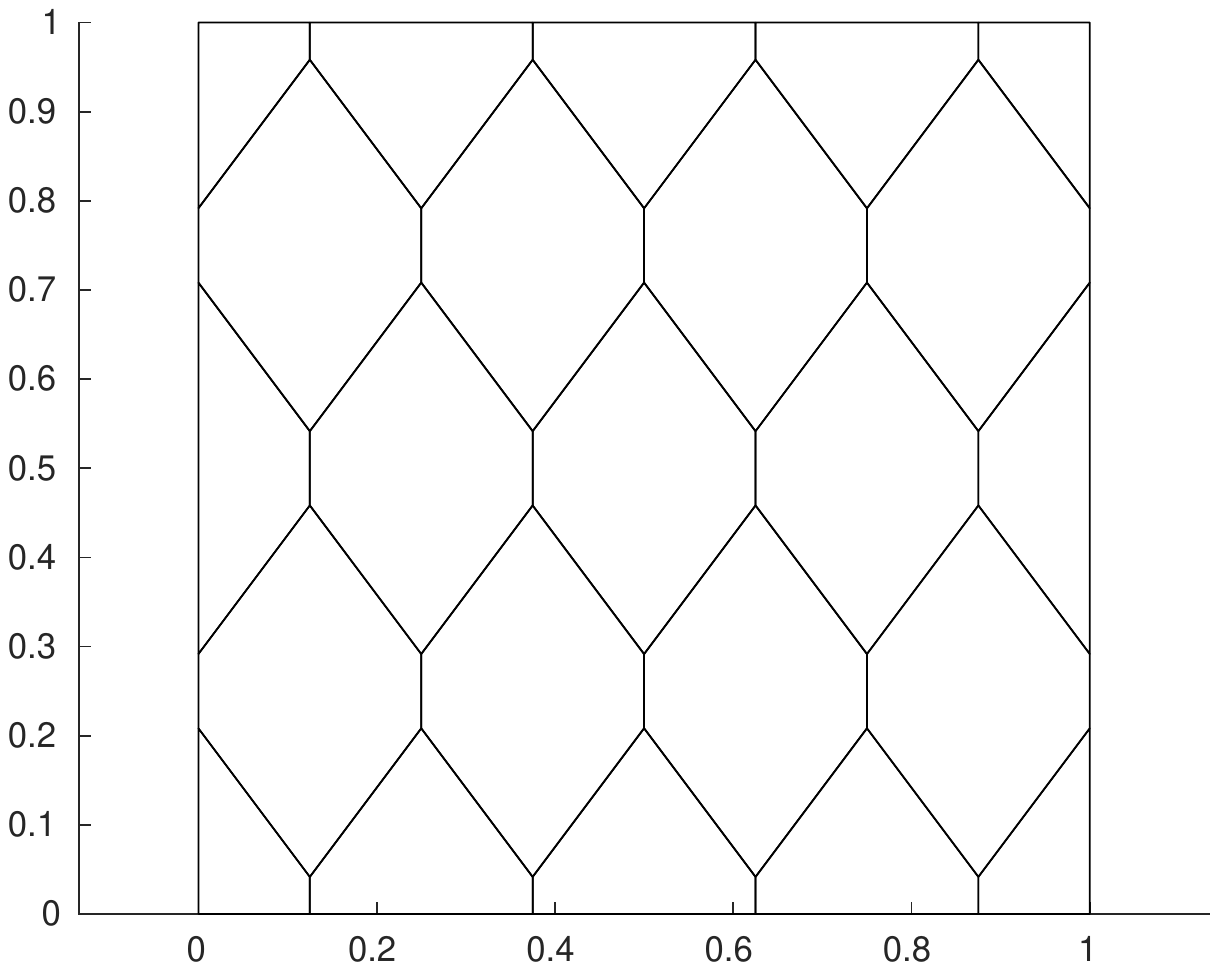}}\quad 
  \subfigure{\includegraphics[width=0.3\textwidth]{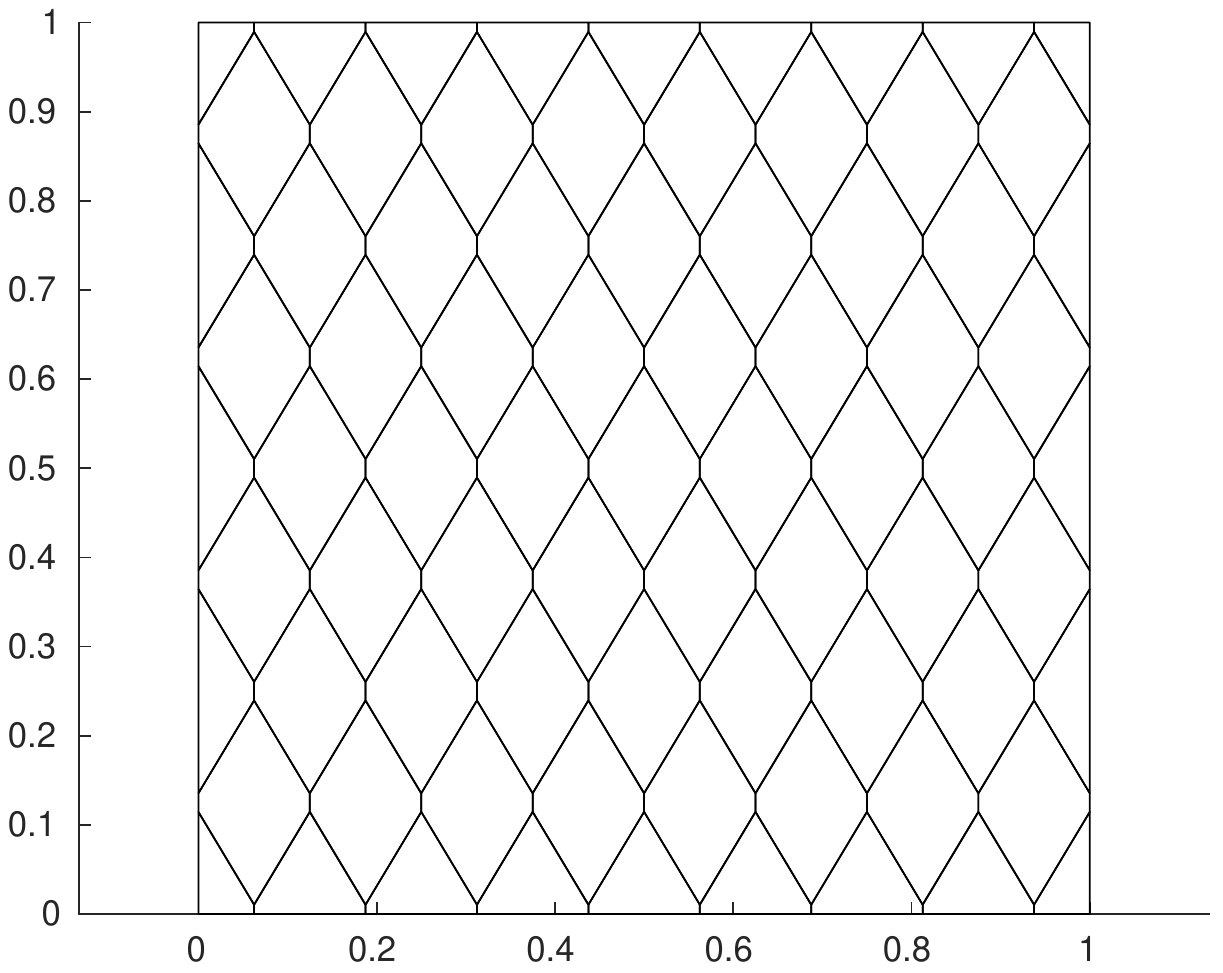}}\quad 
  \subfigure{\includegraphics[width=0.3\textwidth]{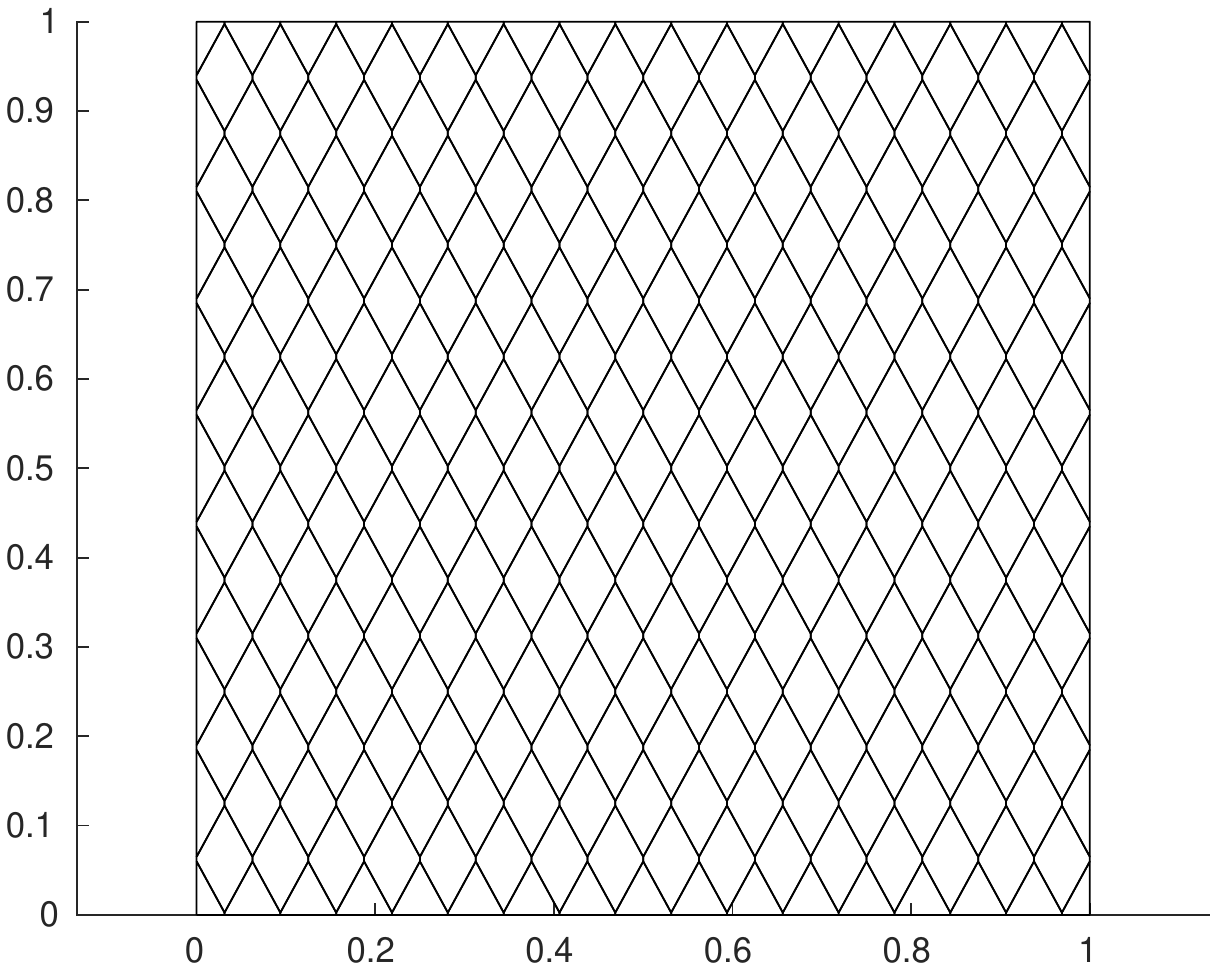}} 
  \caption{Test Case 1: Mesh family \MeshOne~ (exagonal elements with progressively collapsing edges)}
  \label{fig:s-hexa}
\end{figure}

\begin{figure}
  \centering
  \subfigure{\includegraphics[width=0.3\textwidth]{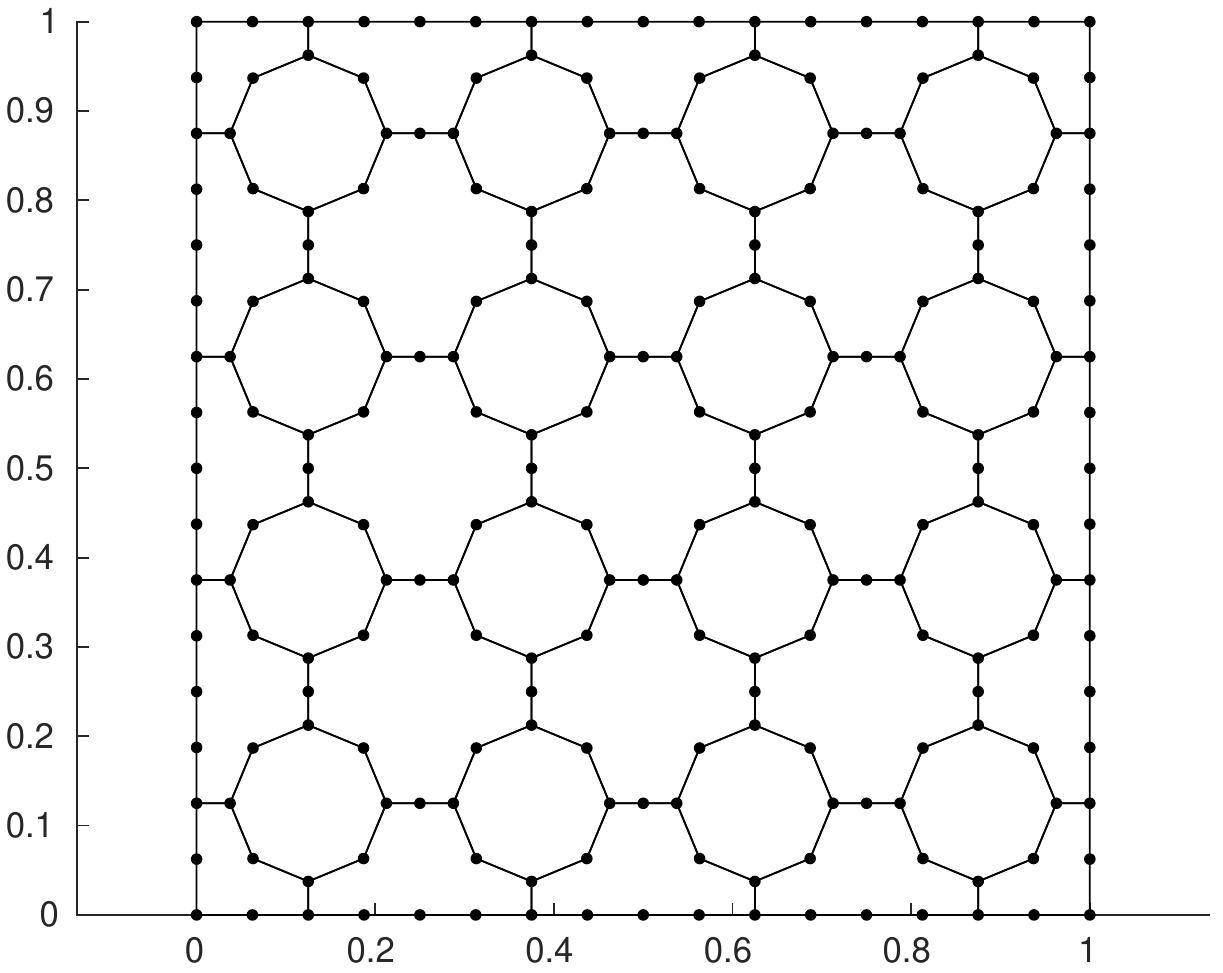}}\quad 
  \subfigure{\includegraphics[width=0.3\textwidth]{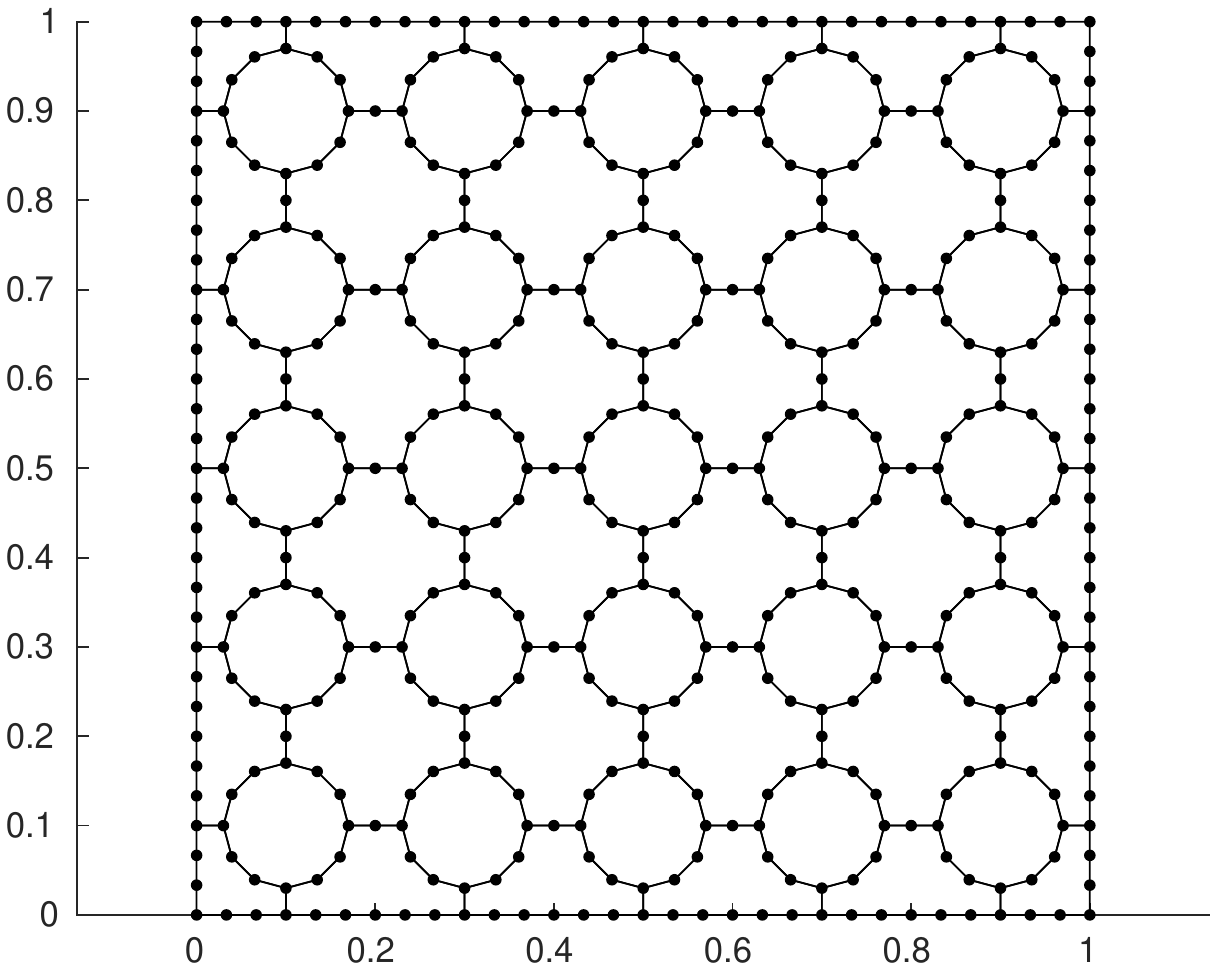}}\quad 
  \subfigure{\includegraphics[width=0.3\textwidth]{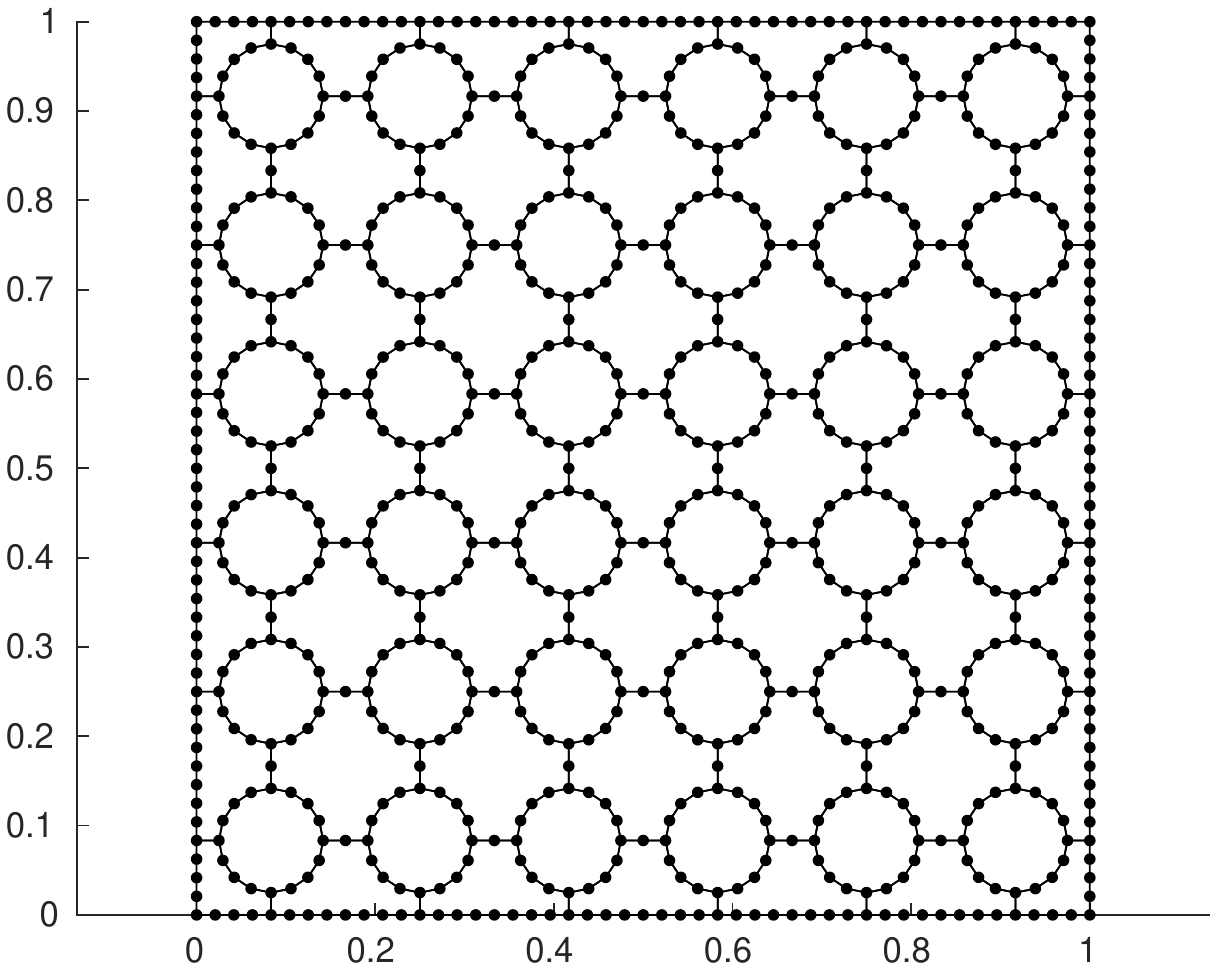}} 
  \caption{Test Case 2: Mesh family \MeshTwo~ (polygonal elements with an increasing number of edges)}
  \label{fig:n-side}
\end{figure}

\begin{figure}
  \centering
  \subfigure{\includegraphics[width=0.3\textwidth]{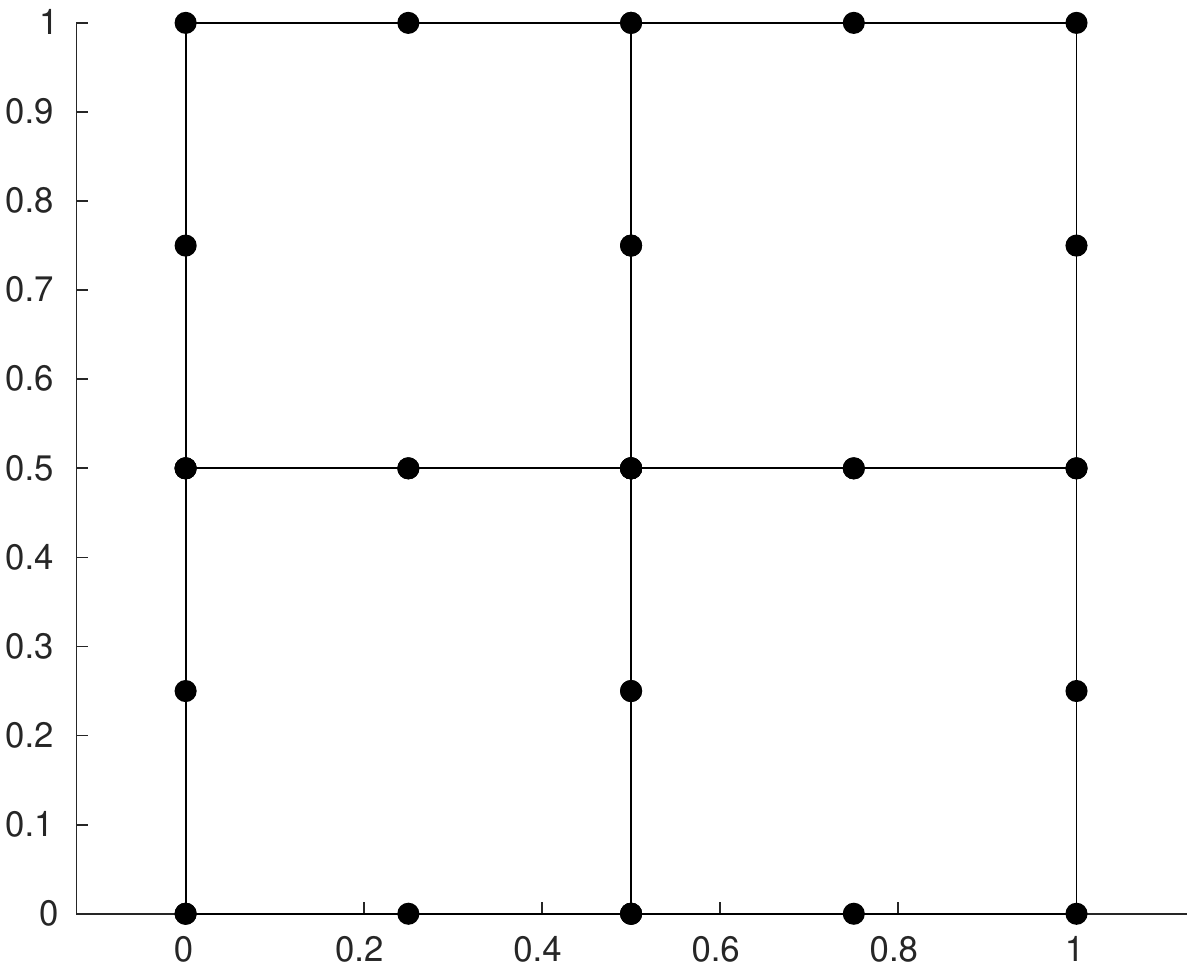}}\quad 
  \subfigure{\includegraphics[width=0.3\textwidth]{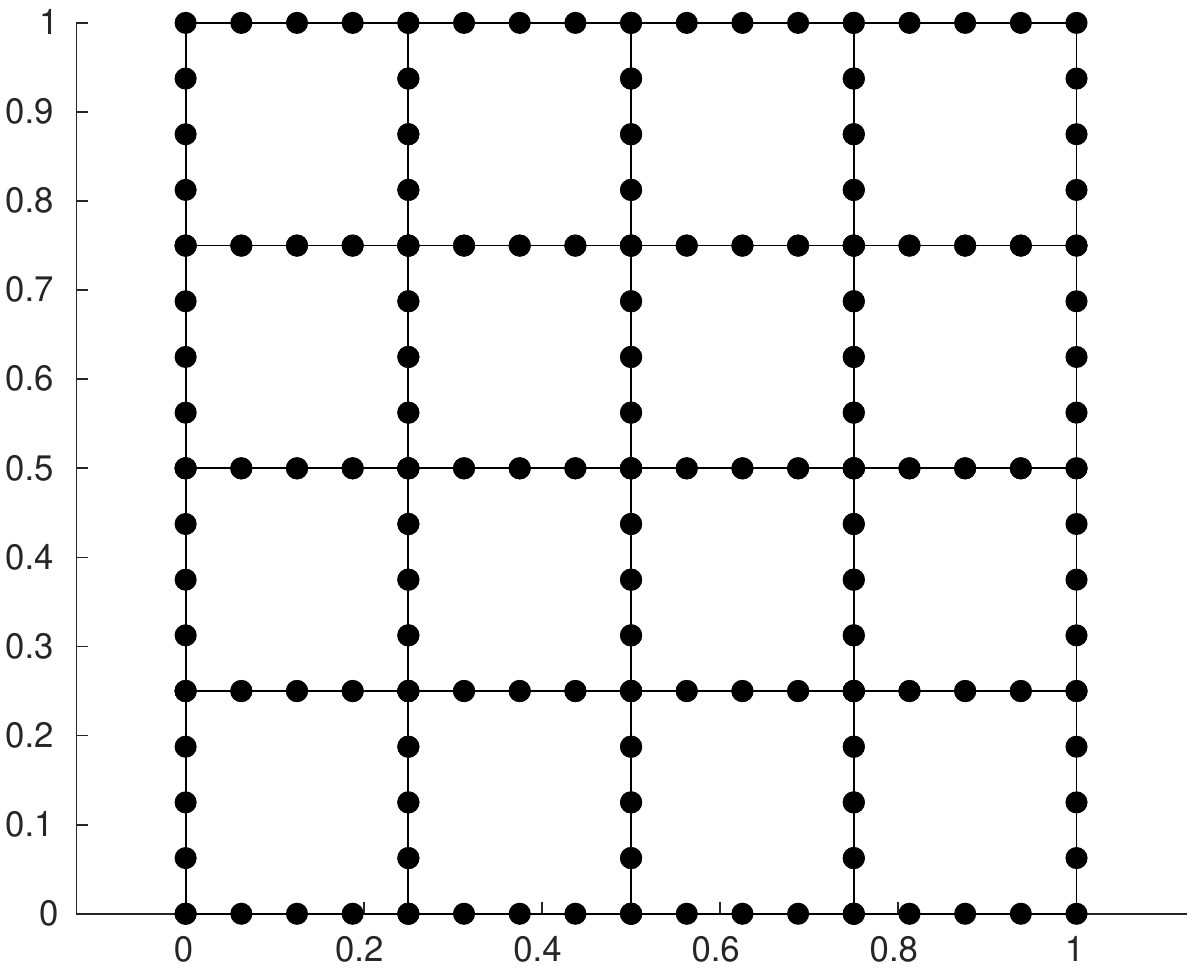}}\quad 
  \subfigure{\includegraphics[width=0.3\textwidth]{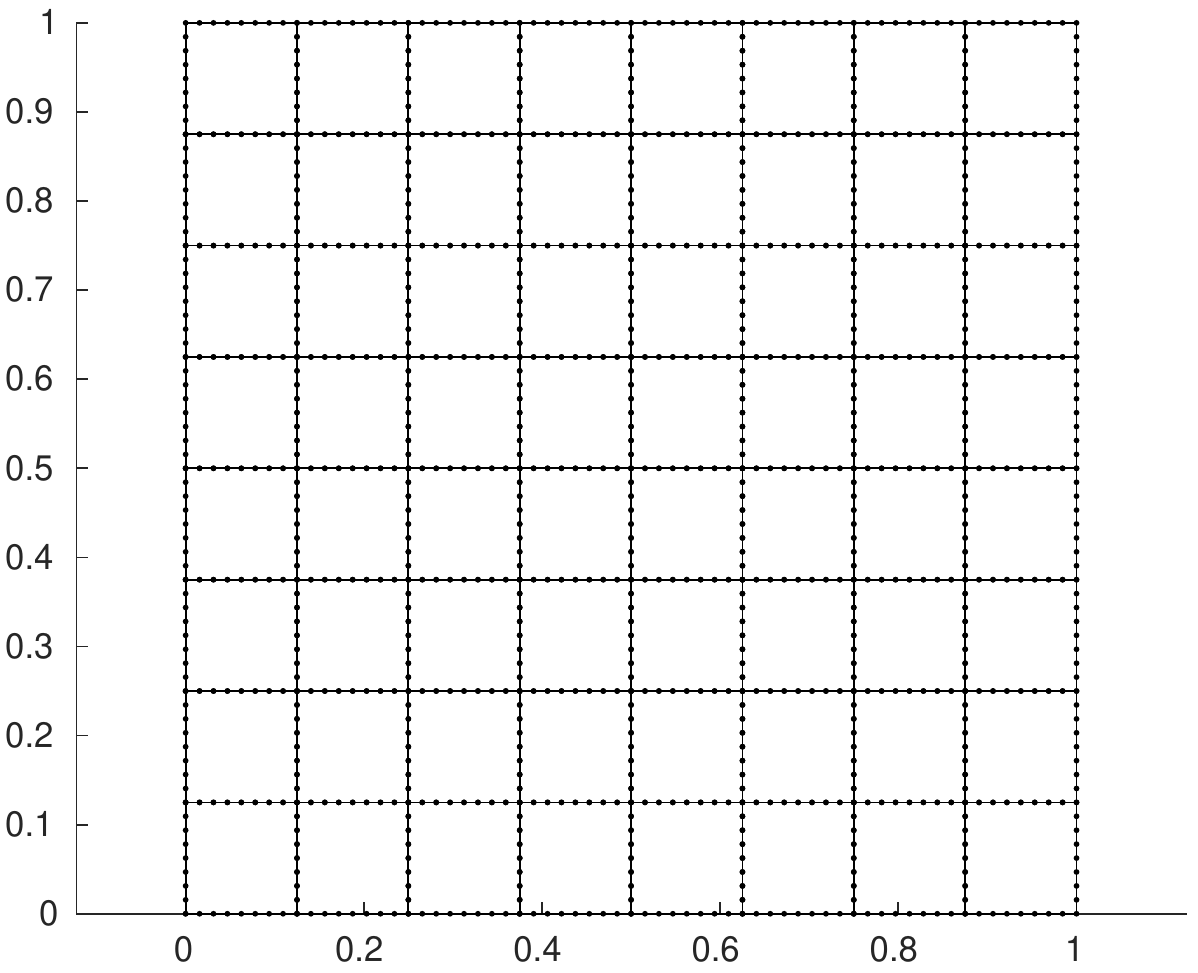}} 
  \caption{Test Case 3: Mesh family \MeshThree~(polygonal elements with a number of edges that doubles at each refinement)}
  \label{fig:dyadic}
\end{figure}

\subsection{Numerical tests}
The main goal of this section is to assess the effectiveness of the
virtual element method with the stabilization forms proposed in the
previous sections.
In particular, we want to investigate experimentally the robustness of
the approximation when using sequence of meshes with possibly
unbounded number of edges per element, and possibly very small edges
adjacent to large edges.
We recall that such kind of mesh sequences violate the mesh regularity
assumption~\SRtwo, although they may satisfy the relaxed condition
\SRtwop~or~\SRtwopp, and the weaker assumption~\SRthree.
To this end, we compare the accuracy of \emph{five} different
numerical approximations~\eqref{eq:Poisson} obtained by using these
stabilizations in the practical implementation of the VEM:

\begin{itemize}[itemsep=2mm,topsep=2mm]
\item  \Stab{1}: standard choice as proposed in \cite{AyusodeDios-Lipnikov-Manzini:2016};
\item \Stab{i}, $i=2,3,4,5$ obtained by duality with $\sigma_i^*$ defined by \eqref{eq:stP:def} with the following choices for the bilinear form $s^0$: 
  \begin{itemize}[itemsep=1.5mm,topsep=1mm]
  \item[-] $\sigma_2$: $s^0 = \ssltwo$ (weighted $L^2$ scalar product);
  \item[-] $\sigma_3$: $s^0 = \sslb$ (scaled Laplace--Beltrami operator);
  \item[-] $\sigma_4$: $s^0 = \ssrlb$ (square root of the Laplace--Beltrami operator).\
  \item[-] $\sigma_5$: $s^0 = \sswav$ (wavelet bases equivalent norm);				
  \end{itemize}
\end{itemize}

We solve Poisson problem~\eqref{eq:Poisson} on the computational
domain $\Omega=(0,1)\times(0,1)$ after setting the load term $\fs$ and
nonhomogeneous Dirichlet boundary conditions $\gs$ on the domain
boundary $\Gamma$ in accordance with the exact solution:
\begin{align}
  \us(\xs,\ys) = \frac{1}{2\pi^2}\cos(\pi\xs)\cos(\pi\ys).
  \label{eq:exact-solution}
\end{align}
All tests are performed using the enhanced non conforming virtual
element discretization space $\VhkenP$, and setting $f_h = \Piz k f$,
where $\Piz k: L^2(\Omega) \to \mathbb{P}_k(\Th)$ is the $L^2$
orthogonal projection onto the space of discontinuous piecewise
polynomials of order up to $k$ on $\Th$.  In all our implementations,
we use orthogonal polynomials as the basis in $\PS{k}(\P)$ for every
$\P\in\Th$ and $\PS{k}(\E)$ for every $\E\in\Edges$ (see Remark
\ref{rem:basis}).
The linear system assembled in any implementation of the VEM is solved
by applying the direct
solver~\verb|PaStiX|~\cite{Henon-Ramet-Roman:2002:pastix}.

\medskip
We run our numerical calculations on three different mesh families:

\medskip
\noindent
$\bullet$ \MeshOne: meshes of hexagonal elements with progressively
collapsing edges, see Figure~\ref{fig:s-hexa};

\smallskip
\noindent
$\bullet$ \MeshTwo: meshes of polygonal elements with an increasing
number of edges, see Figure~\ref{fig:n-side};

\smallskip
\noindent
$\bullet$ \MeshThree: meshes of polygonal elements with a square
boundary $\bP$ partitioned in a number of edges \\ \phantom{$\bullet$
  \MeshThree:} that doubles at each refinement, see
Figure~\ref{fig:dyadic}.

\medskip
\noindent
Three meshes of each family are shown in Figures~\ref{fig:s-hexa},
\ref{fig:n-side}, and~\ref{fig:dyadic}
For each mesh, we provide the following data:
$\Nel$, the number of elements of $\Th$;
$\Ned$, the number of edges of $\Edges$;
$\hh=\max_{\P\in\Th}\hP$, the mesh size coefficient;
$\widehat{\hh}=\min_{e \in\Edges}\hE$, length of the smallest edge, 
$\gamh = \max_{\P\in\Th}(\hP/\hhP)$, largest ratio element
diameter/smallest edge, where, we recall $\hhP = \min_{\E \in \EdgesP
} \hE$. All mesh families satisfy Assumption \TERM{G1} and
\TERM{G3}. The family \MeshOne~ does not satisfy Assumption
\TERM{G2a}, while the families \MeshTwo~ and \MeshThree~ do not
satisfy Assumption \TERM{G2b}.

\medskip
We test the convergence of the VEM by computing the relative
approximation errors defined as:
\begin{align*}
  \Error{0} =\frac{\NORM{\us-\Piz{k}\ush}{0,\Omega}}{\NORM{\us}{0,\Omega}}
  \quad\textrm{and}\quad
  \Error{1} = \frac{\NORM{\nabla\us-\Piz{k-1}\nabla\ush}{0,\Omega}}{\NORM{\nabla\us}{0,\Omega}}
\end{align*}
for $k=1,2,3,4$.

\subsubsection*{Test Case 1}



\begin{table} 
  \centering
  \begin{tabular}{
      c
      S[table-format=6.0]
      S[table-format=6.0]
      S[table-format=3.{\roundPrecision}e-1]
      S[table-format=3.{\roundPrecision}e-1]
      S[table-format=3.{\roundPrecision}e+1]
    }
    \toprule
        {Mesh} & {$\Nel$} & {$\Ned$} & {$\hh$} & {$\widehat{\hh}$} & {$\gamh$}\\
        \midrule        
        1 &   77 &   232 & 2.083333e-01 & 2.083333e-02 & 6.082763e+00\\
        2 &  281 &   844 & 1.145833e-01 & 5.208333e-03 & 1.252996e+01\\
        3 & 1073 &  3220 & 5.989583e-02 & 1.302083e-03 & 2.594224e+01\\
        4 & 4193 & 12580 & 3.059896e-02 & 3.255208e-04 & 5.277310e+01\\
        \bottomrule
  \end{tabular}
  \caption{Test Case~1: data of mesh family \MeshOneA~(shrinking
    factor = $1/2$).}
  \label{tab:s-hexa-a}
\end{table}

\begin{table}
  \centering
  \begin{tabular}{
      c
      S[table-format=6.0]
      S[table-format=6.0]
      S[table-format=3.{\roundPrecision}e-1]
      S[table-format=3.{\roundPrecision}e-1]
      S[table-format=3.{\roundPrecision}e+1]
    }
    \toprule
        {Mesh} & {$\Nel$} & {$\Ned$} & {$\hh$} & {$\widehat{\hh}$} & {$\gamh$}\\
        \midrule
        1 &   77 &   232 & 2.083333e-01 & 2.083333e-02 & 6.082763e+00\\
        2 &  281 &   844 & 1.248372e-01 & 8.138021e-05 & 8.577558e+02\\
        3 & 1073 &  3220 & 6.249936e-02 & 3.178914e-07 & 1.099063e+05\\
        4 & 4193 & 12580 & 3.125000e-02 & 1.241763e-09 & 1.406812e+07\\
        \bottomrule
  \end{tabular}
  \caption{Test Case~1: data of mesh family \MeshOneB~(shrinking
    factor = $1/128$).}
  \label{tab:s-hexa-b}
\end{table}

In Test Case~1, we apply the VEM to the family of hexagonal meshes
with collapsing edges shown in Fig.~\ref{fig:s-hexa}.
We want to investigate the robustness of the different stabilizations
\Stab{i}, $i=1,2,3,4$, with respect to the rate at which $\gamh$
grows.
To this end, at each refinement step we shrink the minimum edge length
by a \emph{shrinking factor} so that $\hhP$ decreases faster than the
mesh size factor $\hP$.
In practice, we consider two different families of refined meshes,
e.g., \MeshOneA~ and \MeshOneB, with a shrinking factor for the
minimum edge length equal to $1/2$ and $1/128$, respectively.
We report the data for these meshes in Tables~\ref{tab:s-hexa-a} and
\ref{tab:s-hexa-b}.


%
%
%


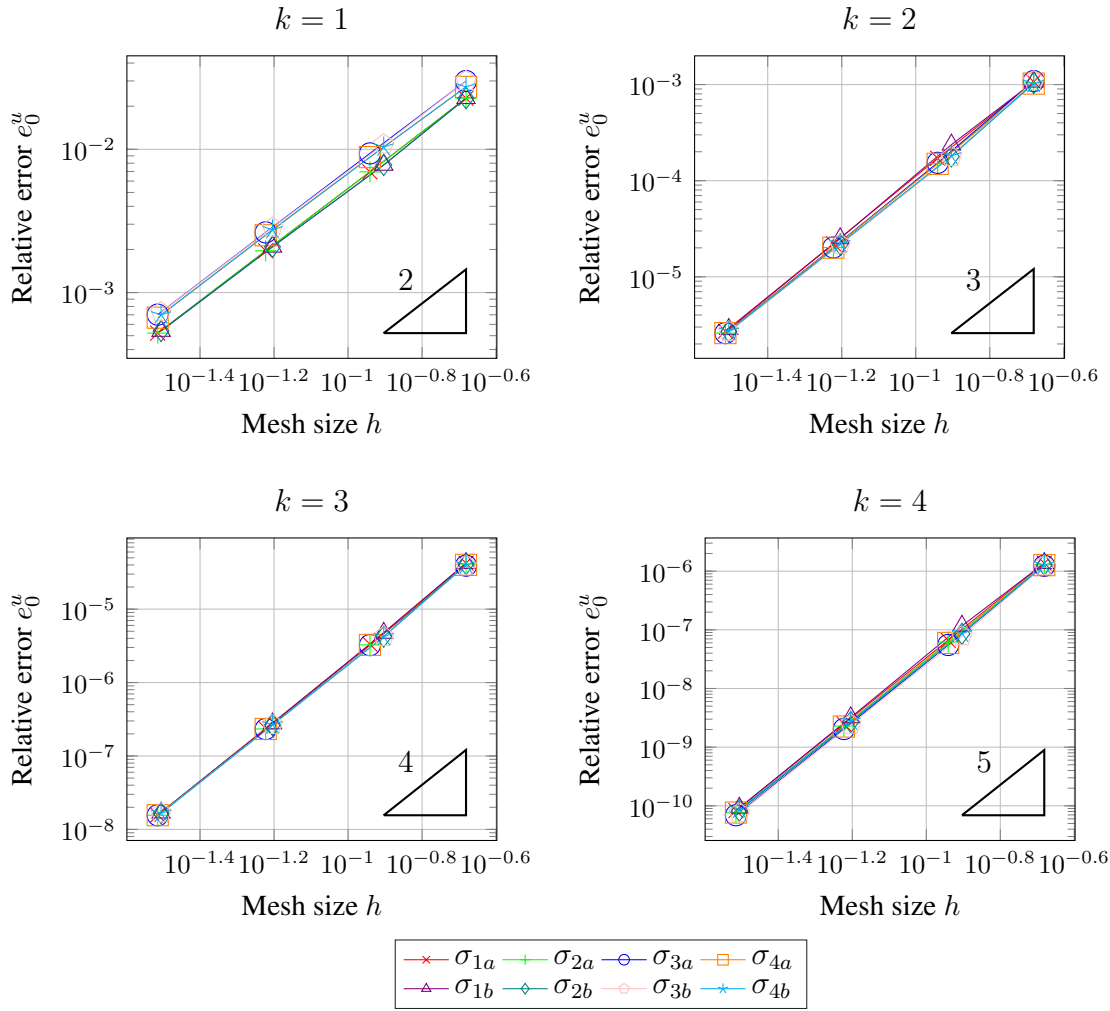
\begin{figure}\centering
  \begin{tabular}{rl}

    \begin{tikzpicture}[trim axis left]
      \begin{loglogaxis}
        [ mark size=4pt, grid=major, small,
          xlabel={Mesh size $h$},
          ylabel={Relative error $\Error{0}$},
          title ={$k=1$},
          legend columns=4, legend to name=s-hexa-L2-1, cycle list name=color ]
        
        \addplot[color=red,mark=x] coordinates {
          (0.208333,0.022726)
          (0.114583,0.00695831)
          (0.0598958,0.00195863)
          (0.030599,0.000521239)
        };
        \addplot[color=green,mark=+] coordinates {
          (0.208333,0.022726)
          (0.114583,0.00695831)
          (0.0598958,0.00195863)
          (0.030599,0.000521239)
        };
        \addplot[color=blue,mark=o] coordinates {
          (0.208333,0.0299662)
          (0.114583,0.00936208)
          (0.0598958,0.00263092)
          (0.030599,0.00070171)
        };
        \addplot[color=orange,mark=square] coordinates {
          (0.208333,0.027283)
          (0.114583,0.00882399)
          (0.0598958,0.00251179)
          (0.030599,0.000667895)
        };
        \addplot[color=violet,mark=triangle] coordinates {
          (0.208333,0.022726)
          (0.124837,0.00777765)
          (0.0624994,0.00208639)
          (0.03125,0.000539045)
        };
        \addplot[color=teal,mark=diamond] coordinates {
          (0.208333,0.022726)
          (0.124837,0.00777765)
          (0.0624994,0.00208639)
          (0.03125,0.000539045)
        };
        \addplot[color=pink,mark=pentagon] coordinates {
          (0.208333,0.0299662)
          (0.124837,0.0108332)
          (0.0624994,0.00284066)
          (0.03125,0.000729818)
        };
        \addplot[color=cyan,mark=star] coordinates {
          (0.208333,0.027283)
          (0.124837,0.0103756)
          (0.0624994,0.00274445)
          (0.03125,0.000698746)
        };
        \addplot[thick,color=black,no markers] coordinates {
          (0.124837, 0.000521239)
          (0.208333, 0.000521239)
          (0.208333, 0.00145166)
          (0.124837, 0.000521239)
        };
        \node [anchor=south east] at (0.161269,0.000869865) {$2$};    
      \end{loglogaxis}
    \end{tikzpicture}
    
    &
    
    \begin{tikzpicture}[trim axis right]
      \begin{loglogaxis}
        [ mark size=4pt, grid=major, small,
          xlabel={Mesh size $h$},
          ylabel={Relative error $\Error{0}$},
          title ={$k=2$},
          legend columns=4, legend to name=s-hexa-L2-2, cycle list name=color ]
        
        \addplot[color=red,mark=x] coordinates {
          (0.208333,0.00106758)
          (0.114583,0.000171111)
          (0.0598958,2.21097e-05)
          (0.030599,2.71663e-06)
        };
        \addplot[color=green,mark=+] coordinates {
          (0.208333,0.00104036)
          (0.114583,0.000150427)
          (0.0598958,2.01211e-05)
          (0.030599,2.59403e-06)
        };
        \addplot[color=blue,mark=o] coordinates {
          (0.208333,0.00108823)
          (0.114583,0.000153363)
          (0.0598958,2.01711e-05)
          (0.030599,2.59343e-06)
        };
        \addplot[color=orange,mark=square] coordinates {
          (0.208333,0.00102412)
          (0.114583,0.000149588)
          (0.0598958,2.00938e-05)
          (0.030599,2.59547e-06)
        };
        \addplot[color=violet,mark=triangle] coordinates {
          (0.208333,0.00106758)
          (0.124837,0.000236901)
          (0.0624994,2.50675e-05)
          (0.03125,2.85174e-06)
        };
        \addplot[color=teal,mark=diamond] coordinates {
          (0.208333,0.00104036)
          (0.124837,0.000178575)
          (0.0624994,2.17746e-05)
          (0.03125,2.69376e-06)
        };
        \addplot[color=pink,mark=pentagon] coordinates {
          (0.208333,0.00108823)
          (0.124837,0.000176392)
          (0.0624994,2.16912e-05)
          (0.03125,2.69001e-06)
        };
        \addplot[color=cyan,mark=star] coordinates {
          (0.208333,0.00102412)
          (0.124837,0.00018138)
          (0.0624994,2.20749e-05)
          (0.03125,2.71603e-06)
        };
        \addplot[thick,color=black,no markers] coordinates {
          (0.124837, 2.59343e-06)
          (0.208333, 2.59343e-06)
          (0.208333, 1.20537e-05)
          (0.124837, 2.59343e-06)
        };
        \node [anchor=south east] at (0.161269,5.59109e-06) {$3$};        
      \end{loglogaxis}
    \end{tikzpicture}

    \\[0.75em]

    \begin{tikzpicture}[trim axis left]
      \begin{loglogaxis}
        [ mark size=4pt, grid=major, small,
          xlabel={Mesh size $h$},
          ylabel={Relative error $\Error{0}$},
          title ={$k=3$},
          legend columns=4, legend to name=s-hexa-L2-3, cycle list name=color ]
        
        \addplot[color=red,mark=x] coordinates {
          (0.208333,4.22785e-05)
          (0.114583,3.32759e-06)
          (0.0598958,2.37004e-07)
          (0.030599,1.57459e-08)
        };
        \addplot[color=green,mark=+] coordinates {
          (0.208333,4.1024e-05)
          (0.114583,3.25936e-06)
          (0.0598958,2.33406e-07)
          (0.030599,1.56031e-08)
        };
        \addplot[color=blue,mark=o] coordinates {
          (0.208333,3.90914e-05)
          (0.114583,3.23537e-06)
          (0.0598958,2.32696e-07)
          (0.030599,1.55717e-08)
        };
        \addplot[color=orange,mark=square] coordinates {
          (0.208333,4.00776e-05)
          (0.114583,3.25821e-06)
          (0.0598958,2.33386e-07)
          (0.030599,1.55981e-08)
        };
        \addplot[color=violet,mark=triangle] coordinates {
          (0.208333,4.22785e-05)
          (0.124837,4.73671e-06)
          (0.0624994,2.76623e-07)
          (0.03125,1.68732e-08)
        };
        \addplot[color=teal,mark=diamond] coordinates {
          (0.208333,4.1024e-05)
          (0.124837,4.21485e-06)
          (0.0624994,2.65403e-07)
          (0.03125,1.6618e-08)
        };
        \addplot[color=pink,mark=pentagon] coordinates {
          (0.208333,3.90914e-05)
          (0.124837,4.19903e-06)
          (0.0624994,2.65064e-07)
          (0.03125,1.661e-08)
        };
        \addplot[color=cyan,mark=star] coordinates {
          (0.208333,4.00776e-05)
          (0.124837,4.21543e-06)
          (0.0624994,2.65497e-07)
          (0.03125,1.66219e-08)
        };
        \addplot[thick,color=black,no markers] coordinates {
          (0.124837, 1.55717e-08)
          (0.208333, 1.55717e-08)
          (0.208333, 1.2078e-07)
          (0.124837, 1.55717e-08)
        };
        \node [anchor=south east] at (0.161269,4.33676e-08) {$4$};
      \end{loglogaxis}
    \end{tikzpicture}
    
    &
    
    \begin{tikzpicture}[trim axis right]
      \begin{loglogaxis}
        [ mark size=4pt, grid=major, small,
          xlabel={Mesh size $h$},
          ylabel={Relative error $e_0^u$},
          title ={$k=4$},
          legend columns=4, legend to name=s-hexa-L2-4, cycle list name=color ]
        
        \addplot[color=red,mark=x] coordinates {
          (0.208333,1.36766e-06)
          (0.114583,6.63776e-08)
          (0.0598958,2.49141e-09)
          (0.030599,8.28085e-11)
        };
        \addplot[color=green,mark=+] coordinates {
          (0.208333,1.285e-06)
          (0.114583,6.17192e-08)
          (0.0598958,2.28953e-09)
          (0.030599,7.72515e-11)
        };
        \addplot[color=blue,mark=o] coordinates {
          (0.208333,1.23271e-06)
          (0.114583,5.51814e-08)
          (0.0598958,2.03809e-09)
          (0.030599,6.9009e-11)
        };
        \addplot[color=orange,mark=square] coordinates {
          (0.208333,1.26551e-06)
          (0.114583,5.97832e-08)
          (0.0598958,2.2479e-09)
          (0.030599,7.68336e-11)
        };
        \addplot[color=violet,mark=triangle] coordinates {
          (0.208333,1.36766e-06)
          (0.124837,1.16607e-07)
          (0.0624994,3.20905e-09)
          (0.03125,9.20641e-11)
        };
        \addplot[color=teal,mark=diamond] coordinates {
          (0.208333,1.285e-06)
          (0.124837,8.53929e-08)
          (0.0624994,2.66504e-09)
          (0.03125,8.30875e-11)
        };
        \addplot[color=pink,mark=pentagon] coordinates {
          (0.208333,1.23271e-06)
          (0.124837,7.81673e-08)
          (0.0624994,2.41833e-09)
          (0.03125,7.50272e-11)
        };
        \addplot[color=cyan,mark=star] coordinates {
          (0.208333,1.26551e-06)
          (0.124837,8.70319e-08)
          (0.0624994,2.76806e-09)
          (0.03125,8.71125e-11)
        };
        \addplot[thick,color=black,no markers] coordinates {
          (0.124837, 6.9009e-11)
          (0.208333, 6.9009e-11)
          (0.208333, 8.93263e-10)
          (0.124837, 6.9009e-11)
        };
        \node [anchor=south east] at (0.161269,2.4828e-10) {$5$};
      \end{loglogaxis}
    \end{tikzpicture}
    
  \end{tabular}

  \begin{tabular}{c}
    \begin{tikzpicture} 
      \begin{axis}[%
          hide axis,
          xmin=10,
          xmax=50,
          ymin=0,
          ymax=0.4,
          legend style={draw=white!15!black,legend cell align=left},
          legend columns=4
        ]
        
        \addlegendimage{color=red,mark=x}
        \addlegendentry{\Stab{1a}}
        
        \addlegendimage{color=green,mark=+}
        \addlegendentry{\Stab{2a}}
        
        \addlegendimage{color=blue,mark=o}
        \addlegendentry{\Stab{3a}}
        
        \addlegendimage{color=orange,mark=square}
        \addlegendentry{\Stab{4a}}
        
        \addlegendimage{color=violet,mark=triangle}
        \addlegendentry{\Stab{1b}}
        
        \addlegendimage{color=teal,mark=diamond}
        \addlegendentry{\Stab{2b}}
        
        \addlegendimage{color=pink,mark=pentagon}
        \addlegendentry{\Stab{3b}}
        
        \addlegendimage{color=cyan,mark=star}
        \addlegendentry{\Stab{4b}}
      \end{axis}
    \end{tikzpicture}
  \end{tabular}

  \caption{Test Case~1: convergence plots for $\Error{0}$
    using the hexagonal meshes \MeshOneA~and \MeshOneB, and the four
    stabilization strategies \Stab{i}, $i=1,2,3,4$.
    Top row: plots for $k=1$ (left panel) and $k=2$ (right panel);
    bottom row: plots for $k=3$ (left panel) and $k=4$ (right panel).}
  \label{fig:s-hexa-L2}
\end{figure}


\begin{figure}\centering
  \begin{tabular}{cc}

    \begin{tikzpicture}[trim axis left]
      \begin{loglogaxis}
        [ mark size=4pt, grid=major, small,
          xlabel={Mesh size $h$},
          ylabel={Relative error $\Error{1}$},
          title ={$k=1$},
          legend columns=4, legend to name=s-hexa-H1-1 ]
        \addplot[color=red,mark=x] coordinates {
          (0.208333,0.169548)
          (0.114583,0.0875832)
          (0.0598958,0.0440485)
          (0.030599,0.0221442)
        };
        \addplot[color=green,mark=+] coordinates {
          (0.208333,0.169548)
          (0.114583,0.0875832)
          (0.0598958,0.0440485)
          (0.030599,0.0221442)
        };
        \addplot[color=blue,mark=o] coordinates {
          (0.208333,0.1627)
          (0.114583,0.0863716)
          (0.0598958,0.0441559)
          (0.030599,0.0222617)
        };
        \addplot[color=orange,mark=square] coordinates {
          (0.208333,0.160732)
          (0.114583,0.0848095)
          (0.0598958,0.0435191)
          (0.030599,0.0220622)
        };
        \addplot[color=violet,mark=triangle] coordinates {
          (0.208333,0.169548)
          (0.124837,0.0941241)
          (0.0624994,0.0453339)
          (0.03125,0.0224331)
        };
        \addplot[color=teal,mark=diamond] coordinates {
          (0.208333,0.169548)
          (0.124837,0.0941241)
          (0.0624994,0.0453339)
          (0.03125,0.0224331)
        };
        \addplot[color=pink,mark=pentagon] coordinates {
          (0.208333,0.1627)
          (0.124837,0.0913506)
          (0.0624994,0.045365)
          (0.03125,0.0225534)
        };
        \addplot[color=cyan,mark=star] coordinates {
          (0.208333,0.160732)
          (0.124837,0.0888322)
          (0.0624994,0.0445384)
          (0.03125,0.0223219)
        };
        \addplot[thick,color=black,no markers] coordinates {
          (0.124837, 0.0220622)
          (0.208333, 0.0220622)
          (0.208333, 0.0368183)
          (0.124837, 0.0220622)
        };
        \node [anchor=south east] at (0.161269,0.0285007) {$1$};        
      \end{loglogaxis}
    \end{tikzpicture}
    
    &

    \begin{tikzpicture}[trim axis right]
      \begin{loglogaxis}
        [ mark size=4pt, grid=major, small,
          xlabel={Mesh size $h$},
          ylabel={Relative error $\Error{1}$},
          title ={$k=2$},
          legend columns=4, legend to name=s-hexa-H1-2, cycle list name=color ]
        
        \addplot[color=red,mark=x] coordinates {
          (0.208333,0.0155473)
          (0.114583,0.00493841)
          (0.0598958,0.00120886)
          (0.030599,0.000284077)
        };
        \addplot[color=green,mark=+] coordinates {
          (0.208333,0.0127528)
          (0.114583,0.00375567)
          (0.0598958,0.000983585)
          (0.030599,0.000249722)
        };
        \addplot[color=blue,mark=o] coordinates {
          (0.208333,0.0146027)
          (0.114583,0.00396522)
          (0.0598958,0.00100208)
          (0.030599,0.000251553)
        };
        \addplot[color=orange,mark=square] coordinates {
          (0.208333,0.0128533)
          (0.114583,0.00374038)
          (0.0598958,0.000983842)
          (0.030599,0.00025034)
        };
        \addplot[color=violet,mark=triangle] coordinates {
          (0.208333,0.0155473)
          (0.124837,0.00666872)
          (0.0624994,0.00137974)
          (0.03125,0.000300109)
        };
        \addplot[color=teal,mark=diamond] coordinates {
          (0.208333,0.0127528)
          (0.124837,0.00443581)
          (0.0624994,0.00106089)
          (0.03125,0.000258895)
        };
        \addplot[color=pink,mark=pentagon] coordinates {
          (0.208333,0.0146027)
          (0.124837,0.00451534)
          (0.0624994,0.00107456)
          (0.03125,0.000260514)
        };
        \addplot[color=cyan,mark=star] coordinates {
          (0.208333,0.0128533)
          (0.124837,0.00456979)
          (0.0624994,0.00109181)
          (0.03125,0.000263746)
        };
        \addplot[thick,color=black,no markers] coordinates {
          (0.124837, 0.000249722)
          (0.208333, 0.000249722)
          (0.208333, 0.000695483)
          (0.124837, 0.000249722)
        };
        \node [anchor=south east] at (0.161269,0.000416746) {$2$};
      \end{loglogaxis}
    \end{tikzpicture}

    \\[1em]

    \begin{tikzpicture}[trim axis left]
      \begin{loglogaxis}
        [ mark size=4pt, grid=major, small,
          xlabel={Mesh size $h$},
          ylabel={Relative error $\Error{1}$},
          title ={$k=3$},
          legend columns=4, legend to name=s-hexa-H1-3, cycle list name=color ]
        
        \addplot[color=red,mark=x] coordinates {
          (0.208333,0.000674046)
          (0.114583,0.000104232)
          (0.0598958,1.44128e-05)
          (0.030599,1.88572e-06)
        };
        \addplot[color=green,mark=+] coordinates {
          (0.208333,0.000641602)
          (0.114583,9.92041e-05)
          (0.0598958,1.39441e-05)
          (0.030599,1.85014e-06)
        };
        \addplot[color=blue,mark=o] coordinates {
          (0.208333,0.000643385)
          (0.114583,9.95734e-05)
          (0.0598958,1.39768e-05)
          (0.030599,1.85259e-06)
        };
        \addplot[color=orange,mark=square] coordinates {
          (0.208333,0.000640451)
          (0.114583,9.92164e-05)
          (0.0598958,1.39447e-05)
          (0.030599,1.85013e-06)
        };
        \addplot[color=violet,mark=triangle] coordinates {
          (0.208333,0.000674046)
          (0.124837,0.000160578)
          (0.0624994,1.70883e-05)
          (0.03125,2.02453e-06)
        };
        \addplot[color=teal,mark=diamond] coordinates {
          (0.208333,0.000641602)
          (0.124837,0.000124338)
          (0.0624994,1.56312e-05)
          (0.03125,1.95873e-06)
        };
        \addplot[color=pink,mark=pentagon] coordinates {
          (0.208333,0.000643385)
          (0.124837,0.00012474)
          (0.0624994,1.56673e-05)
          (0.03125,1.96134e-06)
        };
        \addplot[color=cyan,mark=star] coordinates {
          (0.208333,0.000640451)
          (0.124837,0.00012431)
          (0.0624994,1.56289e-05)
          (0.03125,1.95856e-06)
        };
        \addplot[thick,color=black,no markers] coordinates {
          (0.124837, 1.85013e-06)
          (0.208333, 1.85013e-06)
          (0.208333, 8.59897e-06)
          (0.124837, 1.85013e-06)
        };
        \node [anchor=south east] at (0.161269,3.98864e-06) {$3$};
      \end{loglogaxis}
    \end{tikzpicture}
    
    &
    
    \begin{tikzpicture}[trim axis right]
      \begin{loglogaxis}
        [ mark size=4pt, grid=major, small,
          xlabel={Mesh size $h$},
          ylabel={Relative error $\Error{1}$},
          title ={$k=4$},
          legend columns=4, legend to name=s-hexa-H1-4, cycle list name=color ]
        
        \addplot[color=red,mark=x] coordinates {
          (0.208333,3.25352e-05)
          (0.114583,3.31521e-06)
          (0.0598958,2.69625e-07)
          (0.030599,1.70134e-08)
        };
        \addplot[color=green,mark=+] coordinates {
          (0.208333,2.5936e-05)
          (0.114583,2.18664e-06)
          (0.0598958,1.61485e-07)
          (0.030599,1.09294e-08)
        };
        \addplot[color=blue,mark=o] coordinates {
          (0.208333,2.57535e-05)
          (0.114583,2.17284e-06)
          (0.0598958,1.6242e-07)
          (0.030599,1.10216e-08)
        };
        \addplot[color=orange,mark=square] coordinates {
          (0.208333,2.58293e-05)
          (0.114583,2.16445e-06)
          (0.0598958,1.60757e-07)
          (0.030599,1.09141e-08)
        };
        \addplot[color=violet,mark=triangle] coordinates {
          (0.208333,3.25352e-05)
          (0.124837,9.32246e-06)
          (0.0624994,4.41285e-07)
          (0.03125,2.11989e-08)
        };
        \addplot[color=teal,mark=diamond] coordinates {
          (0.208333,2.5936e-05)
          (0.124837,3.20183e-06)
          (0.0624994,1.94463e-07)
          (0.03125,1.19282e-08)
        };
        \addplot[color=pink,mark=pentagon] coordinates {
          (0.208333,2.57535e-05)
          (0.124837,3.22989e-06)
          (0.0624994,1.96352e-07)
          (0.03125,1.20464e-08)
        };
        \addplot[color=cyan,mark=star] coordinates {
          (0.208333,2.58293e-05)
          (0.124837,3.21331e-06)
          (0.0624994,1.96455e-07)
          (0.03125,1.20993e-08)
        };
        \addplot[thick,color=black,no markers] coordinates {
          (0.124837, 1.09141e-08)
          (0.208333, 1.09141e-08)
          (0.208333, 8.46539e-08)
          (0.124837, 1.09141e-08)
        };
        \node [anchor=south east] at (0.161269,3.03961e-08) {$4$};
      \end{loglogaxis}
    \end{tikzpicture}
    
  \end{tabular}

  \begin{tabular}{c}
    \begin{tikzpicture} 
      \begin{axis}[%
          hide axis,
          xmin=10,
          xmax=50,
          ymin=0,
          ymax=0.4,
          legend style={draw=white!15!black,legend cell align=left},
          legend columns=4
        ]
        
        \addlegendimage{color=red,mark=x}
        \addlegendentry{\Stab{1a}}
        
        \addlegendimage{color=green,mark=+}
        \addlegendentry{\Stab{2a}}
        
        \addlegendimage{color=blue,mark=o}
        \addlegendentry{\Stab{3a}}
        
        \addlegendimage{color=orange,mark=square}
        \addlegendentry{\Stab{4a}}
        
        \addlegendimage{color=violet,mark=triangle}
        \addlegendentry{\Stab{1b}}
        
        \addlegendimage{color=teal,mark=diamond}
        \addlegendentry{\Stab{2b}}
        
        \addlegendimage{color=pink,mark=pentagon}
        \addlegendentry{\Stab{3b}}
        
        \addlegendimage{color=cyan,mark=star}
        \addlegendentry{\Stab{4b}}
      \end{axis}
    \end{tikzpicture}    
  \end{tabular}
  \caption{Test Case~1: convergence plots for  $\Error{1}$
    using the hexagonal meshes \MeshOneA~and \MeshOneB, and the four
    stabilization strategies \Stab{i}, $i=1,2,3,4$.
    Top row: plots for $k=1$ (left panel) and $k=2$ (bottom panel);
    bottom row: plots for $k=3$ (left panel) and $k=4$ (right panel).}
  \label{fig:s-hexa-H1}
\end{figure}

Figures~\ref{fig:s-hexa-L2}, \ref{fig:s-hexa-H1} show the convergence
plots of $\Error{0}$ and $\Error{1}$.
As expected, the errors $\Error{0}$ and $\Error{1}$ behave like
$\calO(h^{k+1})$ and $\calO(h^{k})$, respectively.
All the stabilization strategies exhibit similar performance.
When $k=1$, as far as $\Error{0}$ is concerned, the stabilizations
\Stab{1} and \Stab{2} exhibit a slightly more favorable error constant
with respect to \Stab{3} and \Stab{4}.
When $k=4$, as far as $\Error{1}$ is concerned, the \Stab{1}
stabilization does not perform as well as the other ones.

\subsubsection*{Test Case 2}



\begin{table}[htb]
  \centering
  \begin{tabular}{
  		c
      S[table-format=6.0]
      S[table-format=6.0]
      S[table-format=3.{\roundPrecision}e-1]
      S[table-format=3.{\roundPrecision}e-1]
      S[table-format=3.{\roundPrecision}e+1]
    }
    \toprule
        {Mesh} & {$\Nel$} & {$\Ned$} & {$\hh$} & {$\widehat{\hh}$} & {$\gamh$}\\
        \midrule
        1 &   41 &  448  & 2.610077e-01 & 3.125000e-02 & 8.089499e+00\\
        2 &  145 &  2368 & 1.305038e-01 & 9.375000e-03 & 1.392041e+01\\
        3 &  545 & 11136 & 6.525192e-02 & 4.288250e-03 & 1.521644e+01\\
        4 & 2113 & 55552 & 3.262596e-02 & 1.562500e-03 & 2.088061e+01\\
        \bottomrule
  \end{tabular}
  \caption{Test Case~2: data of mesh family \MeshTwoA.}
  \label{tab:Mesh2A:data}
\end{table}

\begin{table}[htb]
  \centering
  \begin{tabular}{
      c
      S[table-format=6.0]
      S[table-format=6.0]
      S[table-format=3.{\roundPrecision}e-1]
      S[table-format=3.{\roundPrecision}e-1]
      S[table-format=3.{\roundPrecision}e+1]
    }
    \toprule
        {Mesh} & {$\Nel$} & {$\Ned$} & {$\hh$} & {$\widehat{\hh}$} & {$\gamh$}\\
        \midrule
        1 &   41 &    448 & 2.610077e-01 & 3.125000e-02 & 8.089499e+00\\
        2 &  145 &   3008 & 1.305038e-01 & 8.576500e-03 & 1.521644e+01\\
        3 &  545 &  22400 & 6.525192e-02 & 2.083333e-03 & 3.039623e+01\\
        4 & 2113 & 175360 & 3.262596e-02 & 5.208333e-04 & 6.264184e+01\\
        \bottomrule
  \end{tabular}
  \caption{Test Case~2: data of mesh family \MeshTwoB.}
  \label{tab:Mesh2B:data}
\end{table}



\newcommand{\TABROW}[9]{#1 & \quad#2 & \!\!(#3) & \quad#4 & \!\!(#5) & \quad#6 & \!\!(#7) & \quad#8 & \!\!(#9)\\}
\newcommand{\ROW}[9]{#1\,\,\,#2(#3)\,\,\,#4(#5)\,\,\,#6(#7)\,\,\,#8(#9)} 
\begin{table}[htb]
  \centering
  \begin{tabular}{rrlrlrlrl}
    \toprule
    \midrule
    \TABROW{1}{14}{4}{16}{  16}{20}{ 12}{24}{  9}
    \TABROW{2}{20}{4}{24}{  64}{30}{ 28}{40}{ 49}
    \TABROW{3}{26}{4}{32}{ 256}{38}{ 60}{48}{225}
    \TABROW{4}{34}{4}{40}{1024}{50}{124}{64}{961}
    \bottomrule
  \end{tabular}
  \caption{Test Case~2: additional data of mesh family \MeshTwoA.}
  \label{tab:mesh2A:additional}
\end{table}

\begin{table}[htb]
	\centering
	\begin{tabular}{rrlrlrlrl}
		\toprule
		\midrule
 \TABROW{1} { 14}{4}{ 16}{  16}{ 20}{ 12}{  24}{  9}
 \TABROW{2} { 26}{4}{ 32}{  64}{ 38}{ 28}{  48}{ 49}
 \TABROW{3} { 54}{4}{ 64}{ 256}{ 78}{ 60}{  96}{225}
 \TABROW{4} {105}{4}{125}{1024}{155}{124}{ 195}{961}
		\bottomrule
	\end{tabular}
	\caption{Test Case~2: additional data of mesh family \MeshTwoB.}
	\label{tab:mesh2B:additional}
\end{table}

In the second test case, we consider the family of meshes shown in
Figure~\ref{fig:n-side}.
At each mesh-refinement step, we increase the number of edges per
element.
We consider two families of meshes, \MeshTwoA{} and \MeshTwoB, which
are characterized by a different growth rate of $\hP/\hhP$.
The data of these meshes are collected in
Tables~\ref{tab:Mesh2A:data}-\ref{tab:Mesh2B:data}.
Additionally, for these meshes we report in
Tables~\ref{tab:mesh2A:additional}--\ref{tab:mesh2B:additional} the
mesh number, the number of edges per element and for each one of these
data, the number of elements having that specific number of edges.
For example, the first line of Table~\ref{tab:mesh2A:additional},
i.e., ``\ROW{1}{14}{4}{16}{16}{20}{12}{24}{9}'' must be read as:
``\emph{Mesh (refinement) $1$ has $4$ elements with $14$ edges, $16$
elements with $16$ edges, $12$ elements with $20$ edges and $9$
elements with $24$ edges}".



\begin{figure}\centering
  \begin{tabular}{cc}
    
    \begin{tikzpicture}[trim axis left]
      \begin{loglogaxis}
        [ mark size=4pt, grid=major, small,
          xlabel={Mesh size $\hh$},
          ylabel={Relative error $\Error{0}$},
          title ={$k=1$},
          legend columns=4, legend to name=n-side-L2-1 ]
        
        \addplot[color=red,mark=x] coordinates {
          (0.261008,0.0317165)
          (0.130504,0.00926166)
          (0.0652519,0.0024825)
          (0.032626,0.000660216)
        };
        \addplot[color=green,mark=+] coordinates {
          (0.261008,0.0317165)
          (0.130504,0.00926166)
          (0.0652519,0.0024825)
          (0.032626,0.000660216)
        };
        \addplot[color=blue,mark=o] coordinates {
          (0.261008,0.0497374)
          (0.130504,0.0147918)
          (0.0652519,0.00402225)
          (0.032626,0.00105073)
        };
        \addplot[color=orange,mark=square] coordinates {
          (0.261008,0.0311057)
          (0.130504,0.00853502)
          (0.0652519,0.00223087)
          (0.032626,0.000568926)
        };
        \addplot[color=violet,mark=triangle] coordinates {
          (0.261008,0.0317165)
          (0.130504,0.0094974)
          (0.0652519,0.00276675)
          (0.032626,0.000852521)
        };
        \addplot[color=teal,mark=diamond] coordinates {
          (0.261008,0.0317165)
          (0.130504,0.0094974)
          (0.0652519,0.00276675)
          (0.032626,0.000852521)
        };
        \addplot[color=pink,mark=pentagon] coordinates {
          (0.261008,0.0497374)
          (0.130504,0.0150205)
          (0.0652519,0.00414966)
          (0.032626,0.00108562)
        };
        \addplot[color=cyan,mark=star] coordinates {
          (0.261008,0.0311057)
          (0.130504,0.00848767)
          (0.0652519,0.00221651)
          (0.032626,0.000566005)
        };
        \addplot[thick,color=black,no markers] coordinates {
          (0.130504, 0.000566005)
          (0.261008, 0.000566005)
          (0.261008, 0.00226402)
          (0.130504, 0.000566005)
        };
        \node [anchor=south east] at (0.184561,0.00113201) {$2$};
        
      \end{loglogaxis}
    \end{tikzpicture}
    
    &
    
    \begin{tikzpicture}[trim axis right]
      \begin{loglogaxis}
        [ mark size=4pt, grid=major, small,
          xlabel={Mesh size $\hh$},
          ylabel={Relative error $\Error{0}$},
          title ={$k=2$},
          legend columns=4, legend to name=n-side-L2-2 ]
        
        \addplot[color=red,mark=x] coordinates {
          (0.261008,0.00270682)
          (0.130504,0.000366127)
          (0.0652519,4.71062e-05)
          (0.032626,5.96509e-06)
        };
        \addplot[color=green,mark=+] coordinates {
          (0.261008,0.00271794)
          (0.130504,0.000366379)
          (0.0652519,4.71168e-05)
          (0.032626,5.96543e-06)
        };
        \addplot[color=blue,mark=o] coordinates {
          (0.261008,0.00300651)
          (0.130504,0.000381559)
          (0.0652519,4.79161e-05)
          (0.032626,6.03563e-06)
        };
        \addplot[color=orange,mark=square] coordinates {
          (0.261008,0.00279333)
          (0.130504,0.000368949)
          (0.0652519,4.72338e-05)
          (0.032626,5.97218e-06)
        };
        \addplot[color=violet,mark=triangle] coordinates {
          (0.261008,0.00270682)
          (0.130504,0.000365967)
          (0.0652519,4.7104e-05)
          (0.032626,5.9614e-06)
        };
        \addplot[color=teal,mark=diamond] coordinates {
          (0.261008,0.00271794)
          (0.130504,0.000366168)
          (0.0652519,4.71068e-05)
          (0.032626,5.96144e-06)
        };
        \addplot[color=pink,mark=pentagon] coordinates {
          (0.261008,0.00300651)
          (0.130504,0.000381465)
          (0.0652519,4.80369e-05)
          (0.032626,6.04588e-06)
        };
        \addplot[color=cyan,mark=star] coordinates {
          (0.261008,0.00279333)
          (0.130504,0.00036825)
          (0.0652519,4.71355e-05)
          (0.032626,5.96107e-06)
        };
        \addplot[thick,color=black,no markers] coordinates {
          (0.130504, 5.96107e-06)
          (0.261008, 5.96107e-06)
          (0.261008, 4.76886e-05)
          (0.130504, 5.96107e-06)
        };
        \node [anchor=south east] at (0.184561,1.68605e-05) {$3$};    
      \end{loglogaxis}
    \end{tikzpicture}

    \\[0.75em]
    
    \begin{tikzpicture}[trim axis left]
      \begin{loglogaxis}
        [ mark size=4pt, grid=major, small,
          xlabel={Mesh size $\hh$},
          ylabel={Relative error $\Error{0}$},
          title ={$k=3$},
          legend columns=4, legend to name=n-side-L2-3 ]
        
        \addplot[color=red,mark=x] coordinates {
          (0.261008,0.000138064)
          (0.130504,9.67947e-06)
          (0.0652519,6.40067e-07)
          (0.032626,4.32841e-08)
        };
        \addplot[color=green,mark=+] coordinates {
          (0.261008,0.000139291)
          (0.130504,9.78388e-06)
          (0.0652519,6.45767e-07)
          (0.032626,4.35305e-08)
        };
        \addplot[color=blue,mark=o] coordinates {
          (0.261008,0.000238412)
          (0.130504,1.72974e-05)
          (0.0652519,1.12717e-06)
          (0.032626,7.27265e-08)
        };
        \addplot[color=orange,mark=square] coordinates {
          (0.261008,0.000158378)
          (0.130504,1.04859e-05)
          (0.0652519,6.74686e-07)
          (0.032626,4.26485e-08)
        };
        \addplot[color=violet,mark=triangle] coordinates {
          (0.261008,0.000138064)
          (0.130504,9.91104e-06)
          (0.0652519,7.46594e-07)
          (0.032626,5.67118e-08)
        };
        \addplot[color=teal,mark=diamond] coordinates {
          (0.261008,0.000139291)
          (0.130504,1.00001e-05)
          (0.0652519,7.48828e-07)
          (0.032626,5.67515e-08)
        };
        \addplot[color=pink,mark=pentagon] coordinates {
          (0.261008,0.000238412)
          (0.130504,1.74831e-05)
          (0.0652519,1.16562e-06)
          (0.032626,7.49619e-08)
        };
        \addplot[color=cyan,mark=star] coordinates {
          (0.261008,0.000158378)
          (0.130504,1.04347e-05)
          (0.0652519,6.67416e-07)
          (0.032626,4.20956e-08)
        };
        \addplot[thick,color=black,no markers] coordinates {
          (0.130504, 4.20956e-08)
          (0.261008, 4.20956e-08)
          (0.261008, 6.7353e-07)
          (0.130504, 4.20956e-08)
        };
        \node [anchor=south east] at (0.184561,1.68382e-07) {$4$};
      \end{loglogaxis}
    \end{tikzpicture}
    
    &
    
    \begin{tikzpicture}[trim axis right]
      \begin{loglogaxis}
        [ mark size=4pt, grid=major, small,
          xlabel={Mesh size $\hh$},
          ylabel={Relative error $\Error{0}$},
          title ={$k=4$},
          legend columns=4, legend to name=n-side-L2-4 ]
        
        \addplot[color=red,mark=x] coordinates {
          (0.261008,7.07697e-06)
          (0.130504,2.36429e-07)
          (0.0652519,7.74435e-09)
          (0.032626,2.51095e-10)
        };
        \addplot[color=green,mark=+] coordinates {
          (0.261008,6.77732e-06)
          (0.130504,2.3691e-07)
          (0.0652519,7.78623e-09)
          (0.032626,2.52669e-10)
        };
        \addplot[color=blue,mark=o] coordinates {
          (0.261008,7.88205e-06)
          (0.130504,4.61336e-07)
          (0.0652519,1.66601e-08)
          (0.032626,5.834e-10)
        };
        \addplot[color=orange,mark=square] coordinates {
          (0.261008,6.74206e-06)
          (0.130504,2.43289e-07)
          (0.0652519,8.00199e-09)
          (0.032626,2.55844e-10)
        };
        \addplot[color=violet,mark=triangle] coordinates {
          (0.261008,7.07697e-06)
          (0.130504,2.35071e-07)
          (0.0652519,8.12067e-09)
          (0.032626,2.96877e-10)
        };
        \addplot[color=teal,mark=diamond] coordinates {
          (0.261008,6.77732e-06)
          (0.130504,2.36303e-07)
          (0.0652519,8.15972e-09)
          (0.032626,2.97511e-10)
        };
        \addplot[color=pink,mark=pentagon] coordinates {
          (0.261008,7.88205e-06)
          (0.130504,4.81159e-07)
          (0.0652519,1.87857e-08)
          (0.032626,2.51523e-09)
        };
        \addplot[color=cyan,mark=star] coordinates {
          (0.261008,6.74206e-06)
          (0.130504,2.42861e-07)
          (0.0652519,7.96989e-09)
          (0.032626,2.53804e-10)
        };
        \addplot[thick,color=black,no markers] coordinates {
          (0.130504, 2.51095e-10)
          (0.261008, 2.51095e-10)
          (0.261008, 8.03504e-09)
          (0.130504, 2.51095e-10)
        };
        \node [anchor=south east] at (0.184561,1.42041e-09) {$5$};
      \end{loglogaxis}
    \end{tikzpicture}
  \end{tabular}
  
  \begin{tabular}{c}
    \begin{tikzpicture} 
      \begin{axis}[%
          hide axis,
          xmin=10,
          xmax=50,
          ymin=0,
          ymax=0.4,
          legend style={draw=white!15!black,legend cell align=left},
          legend columns=4
        ]
        
        \addlegendimage{color=red,mark=x}
        \addlegendentry{\Stab{1a}}
        
        \addlegendimage{color=green,mark=+}
        \addlegendentry{\Stab{2a}}
        
        \addlegendimage{color=blue,mark=o}
        \addlegendentry{\Stab{3a}}
        
        \addlegendimage{color=orange,mark=square}
        \addlegendentry{\Stab{4a}}
        
        \addlegendimage{color=violet,mark=triangle}
        \addlegendentry{\Stab{1b}}
        
        \addlegendimage{color=teal,mark=diamond}
        \addlegendentry{\Stab{2b}}
        
        \addlegendimage{color=pink,mark=pentagon}
        \addlegendentry{\Stab{3b}}
        
        \addlegendimage{color=cyan,mark=star}
        \addlegendentry{\Stab{4b}}
      \end{axis}
    \end{tikzpicture}
  \end{tabular}

  \caption{Test Case~2: convergence plots for $\Error{0}$
    using the polygonal meshes \MeshTwoA~and \MeshTwoB, and the four
    stabilization strategies \Stab{i}, $i=1,2,3,4$.
    Top row: plots for $k=1$ (left panel) and $k=2$ (right panel);
    right row: plots for $k=3$ (left panel) and $k=4$ (bottom
    panel).}
    \label{fig:n-side-L2}
\end{figure}
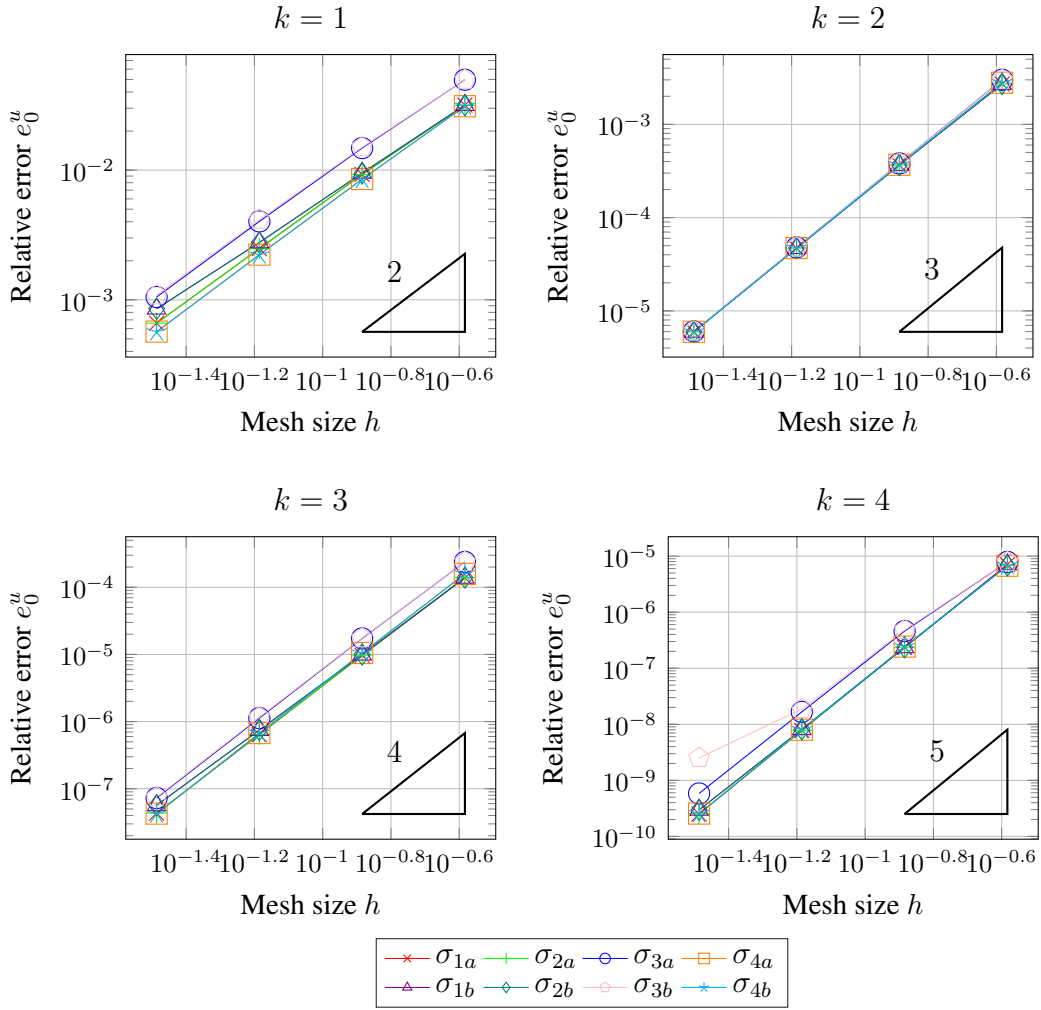


\begin{figure}\centering
  \begin{tabular}{cc}
    
    \begin{tikzpicture}[trim axis left]
      \begin{loglogaxis}
        [ mark size=4pt, grid=major, small,
          xlabel={Mesh size $\hh$},
          ylabel={Relative error $\Error{1}$},
          title ={$k=1$},
          legend columns=4, legend to name=n-side-H1-1 ]
        
        \addplot[color=red,mark=x] coordinates {
          (0.261008,0.217592)
          (0.130504,0.113399)
          (0.0652519,0.0576423)
          (0.032626,0.0290364)
        };
        \addplot[color=green,mark=+] coordinates {
          (0.261008,0.217592)
          (0.130504,0.113399)
          (0.0652519,0.0576423)
          (0.032626,0.0290364)
        };
        \addplot[color=blue,mark=o] coordinates {
          (0.261008,0.223077)
          (0.130504,0.114486)
          (0.0652519,0.0578262)
          (0.032626,0.0290608)
        };
        \addplot[color=orange,mark=square] coordinates {
          (0.261008,0.218289)
          (0.130504,0.11332)
          (0.0652519,0.0575912)
          (0.032626,0.0290151)
        };
        \addplot[color=violet,mark=triangle] coordinates {
          (0.261008,0.217592)
          (0.130504,0.113393)
          (0.0652519,0.0576742)
          (0.032626,0.0290538)
        };
        \addplot[color=teal,mark=diamond] coordinates {
          (0.261008,0.217592)
          (0.130504,0.113393)
          (0.0652519,0.0576742)
          (0.032626,0.0290538)
        };
        \addplot[color=pink,mark=pentagon] coordinates {
          (0.261008,0.223077)
          (0.130504,0.114558)
          (0.0652519,0.0578429)
          (0.032626,0.0290581)
        };
        \addplot[color=cyan,mark=star] coordinates {
          (0.261008,0.218289)
          (0.130504,0.11327)
          (0.0652519,0.0575631)
          (0.032626,0.0290032)
        };
        \addplot[thick,color=black,no markers] coordinates {
          (0.130504, 0.0290032)
          (0.261008, 0.0290032)
          (0.261008, 0.0580064)
          (0.130504, 0.0290032)
        };
        \node [anchor=south east] at (0.184561,0.0410167) {$1$};
        
      \end{loglogaxis}
    \end{tikzpicture}
    
    &
    
    \begin{tikzpicture}[trim axis right]
      \begin{loglogaxis}
        [ mark size=4pt, grid=major, small,
          xlabel={Mesh size $\hh$},
          ylabel={Relative error $\Error{1}$},
          title ={$k=2$},
          legend columns=4, legend to name=n-side-H1-2 ]
        
        \addplot[color=red,mark=x] coordinates {
          (0.261008,0.0239179)
          (0.130504,0.00628504)
          (0.0652519,0.00160673)
          (0.032626,0.000405871)
        };
        \addplot[color=green,mark=+] coordinates {
          (0.261008,0.0238991)
          (0.130504,0.00628501)
          (0.0652519,0.00160664)
          (0.032626,0.000405853)
        };
        \addplot[color=blue,mark=o] coordinates {
          (0.261008,0.0245813)
          (0.130504,0.00631043)
          (0.0652519,0.0016005)
          (0.032626,0.000402448)
        };
        \addplot[color=orange,mark=square] coordinates {
          (0.261008,0.0239095)
          (0.130504,0.00628269)
          (0.0652519,0.0016059)
          (0.032626,0.000405411)
        };
        \addplot[color=violet,mark=triangle] coordinates {
          (0.261008,0.0239179)
          (0.130504,0.00627635)
          (0.0652519,0.00160468)
          (0.032626,0.000405498)
        };
        \addplot[color=teal,mark=diamond] coordinates {
          (0.261008,0.0238991)
          (0.130504,0.00627625)
          (0.0652519,0.00160466)
          (0.032626,0.000405496)
        };
        \addplot[color=pink,mark=pentagon] coordinates {
          (0.261008,0.0245813)
          (0.130504,0.00630154)
          (0.0652519,0.00159398)
          (0.032626,0.000401183)
        };
        \addplot[color=cyan,mark=star] coordinates {
          (0.261008,0.0239095)
          (0.130504,0.00627766)
          (0.0652519,0.00160354)
          (0.032626,0.000405032)
        };
        \addplot[thick,color=black,no markers] coordinates {
          (0.130504, 0.000401183)
          (0.261008, 0.000401183)
          (0.261008, 0.00160473)
          (0.130504, 0.000401183)
        };
        \node [anchor=south east] at (0.184561,0.000802366) {$2$};
        
      \end{loglogaxis}
    \end{tikzpicture}

    \\[0.75em]

    \begin{tikzpicture}[trim axis left]
      \begin{loglogaxis}
        [ mark size=4pt, grid=major, small,
          xlabel={Mesh size $\hh$},
          ylabel={Relative error $\Error{1}$},
          title={$k=3$},
          legend columns=4, legend to name=n-side-H1-3 ]
        
        \addplot[color=red,mark=x] coordinates {
          (0.261008,0.00182354)
          (0.130504,0.000240633)
          (0.0652519,3.08509e-05)
          (0.032626,3.93661e-06)
        };
        \addplot[color=green,mark=+] coordinates {
          (0.261008,0.00181789)
          (0.130504,0.000240679)
          (0.0652519,3.08729e-05)
          (0.032626,3.94011e-06)
        };
        \addplot[color=blue,mark=o] coordinates {
          (0.261008,0.00216238)
          (0.130504,0.000297862)
          (0.0652519,3.83933e-05)
          (0.032626,4.91206e-06)
        };
        \addplot[color=orange,mark=square] coordinates {
          (0.261008,0.00181571)
          (0.130504,0.000240438)
          (0.0652519,3.08621e-05)
          (0.032626,3.90603e-06)
        };
        \addplot[color=violet,mark=triangle] coordinates {
          (0.261008,0.00182354)
          (0.130504,0.000239782)
          (0.0652519,3.16025e-05)
          (0.032626,4.2729e-06)
        };
        \addplot[color=teal,mark=diamond] coordinates {
          (0.261008,0.00181789)
          (0.130504,0.000239943)
          (0.0652519,3.16272e-05)
          (0.032626,4.27417e-06)
        };
        \addplot[color=pink,mark=pentagon] coordinates {
          (0.261008,0.00216238)
          (0.130504,0.000299436)
          (0.0652519,3.92371e-05)
          (0.032626,5.01004e-06)
        };
        \addplot[color=cyan,mark=star] coordinates {
          (0.261008,0.00181571)
          (0.130504,0.000239535)
          (0.0652519,3.07324e-05)
          (0.032626,3.89132e-06)
        };
        \addplot[thick,color=black,no markers] coordinates {
          (0.130504, 3.89132e-06)
          (0.261008, 3.89132e-06)
          (0.261008, 3.11306e-05)
          (0.130504, 3.89132e-06)
        };
        \node [anchor=south east] at (0.184561,1.10063e-05) {$3$};
        
      \end{loglogaxis}
    \end{tikzpicture}
    
    &
    
    \begin{tikzpicture}[trim axis right]
      \begin{loglogaxis}
        [ mark size=4pt, grid=major, small,
          xlabel={Mesh size $\hh$},
          ylabel={Relative error $\Error{1}$},
          title ={$k=4$},
          legend columns=4, legend to name=n-side-H1-4 ]
        
        \addplot[color=red,mark=x] coordinates {
          (0.261008,0.000110156)
          (0.130504,7.2749e-06)
          (0.0652519,4.69044e-07)
          (0.032626,2.97719e-08)
        };
        \addplot[color=green,mark=+] coordinates {
          (0.261008,0.000107616)
          (0.130504,7.24572e-06)
          (0.0652519,4.68382e-07)
          (0.032626,2.97582e-08)
        };
        \addplot[color=blue,mark=o] coordinates {
          (0.261008,0.000110217)
          (0.130504,9.36189e-06)
          (0.0652519,6.3665e-07)
          (0.032626,4.31286e-08)
        };
        \addplot[color=orange,mark=square] coordinates {
          (0.261008,0.000105754)
          (0.130504,7.19066e-06)
          (0.0652519,4.66608e-07)
          (0.032626,2.96666e-08)
        };
        \addplot[color=violet,mark=triangle] coordinates {
          (0.261008,0.000110156)
          (0.130504,7.20217e-06)
          (0.0652519,4.66296e-07)
          (0.032626,3.02937e-08)
        };
        \addplot[color=teal,mark=diamond] coordinates {
          (0.261008,0.000107616)
          (0.130504,7.19082e-06)
          (0.0652519,4.66409e-07)
          (0.032626,3.03065e-08)
        };
        \addplot[color=pink,mark=pentagon] coordinates {
          (0.261008,0.000110217)
          (0.130504,9.60271e-06)
          (0.0652519,6.98151e-07)
          (0.032626,4.67841e-08)
        };
        \addplot[color=cyan,mark=star] coordinates {
          (0.261008,0.000105754)
          (0.130504,7.15939e-06)
          (0.0652519,4.64502e-07)
          (0.032626,2.95691e-08)
        };
        \addplot[thick,color=black,no markers] coordinates {
          (0.130504, 2.95691e-08)
          (0.261008, 2.95691e-08)
          (0.261008, 4.73106e-07)
          (0.130504, 2.95691e-08)
        };
        \node [anchor=south east] at (0.184561,1.18276e-07) {$4$};
        
      \end{loglogaxis}
    \end{tikzpicture}
  \end{tabular}

  \begin{tabular}{c}
    \begin{tikzpicture} 
      \begin{axis}[%
          hide axis,
          xmin=10,
          xmax=50,
          ymin=0,
          ymax=0.4,
          legend style={draw=white!15!black,legend cell align=left},
          legend columns=4
        ]
        
        \addlegendimage{color=red,mark=x}
        \addlegendentry{\Stab{1a}}
        
        \addlegendimage{color=green,mark=+}
        \addlegendentry{\Stab{2a}}
        
        \addlegendimage{color=blue,mark=o}
        \addlegendentry{\Stab{3a}}
        
        \addlegendimage{color=orange,mark=square}
        \addlegendentry{\Stab{4a}}
        
        \addlegendimage{color=violet,mark=triangle}
        \addlegendentry{\Stab{1b}}
        
        \addlegendimage{color=teal,mark=diamond}
        \addlegendentry{\Stab{2b}}
        
        \addlegendimage{color=pink,mark=pentagon}
        \addlegendentry{\Stab{3b}}
        
        \addlegendimage{color=cyan,mark=star}
        \addlegendentry{\Stab{4b}}
      \end{axis}
    \end{tikzpicture}
  \end{tabular}

  \caption{Test Case~2: convergence plots for  $\Error{1}$
    using the polygonal meshes \MeshTwoA~and \MeshTwoB, and the four
    stabilization strategies \Stab{i}, $i=1,2,3,4$.
    Top row: plots for $k=1$ (left panel) and $k=2$ (bottom panel);
    bottom row: plots for $k=3$ (left panel) and $k=4$ (bottom
    panel).}
  \label{fig:n-side-H1}
\end{figure}
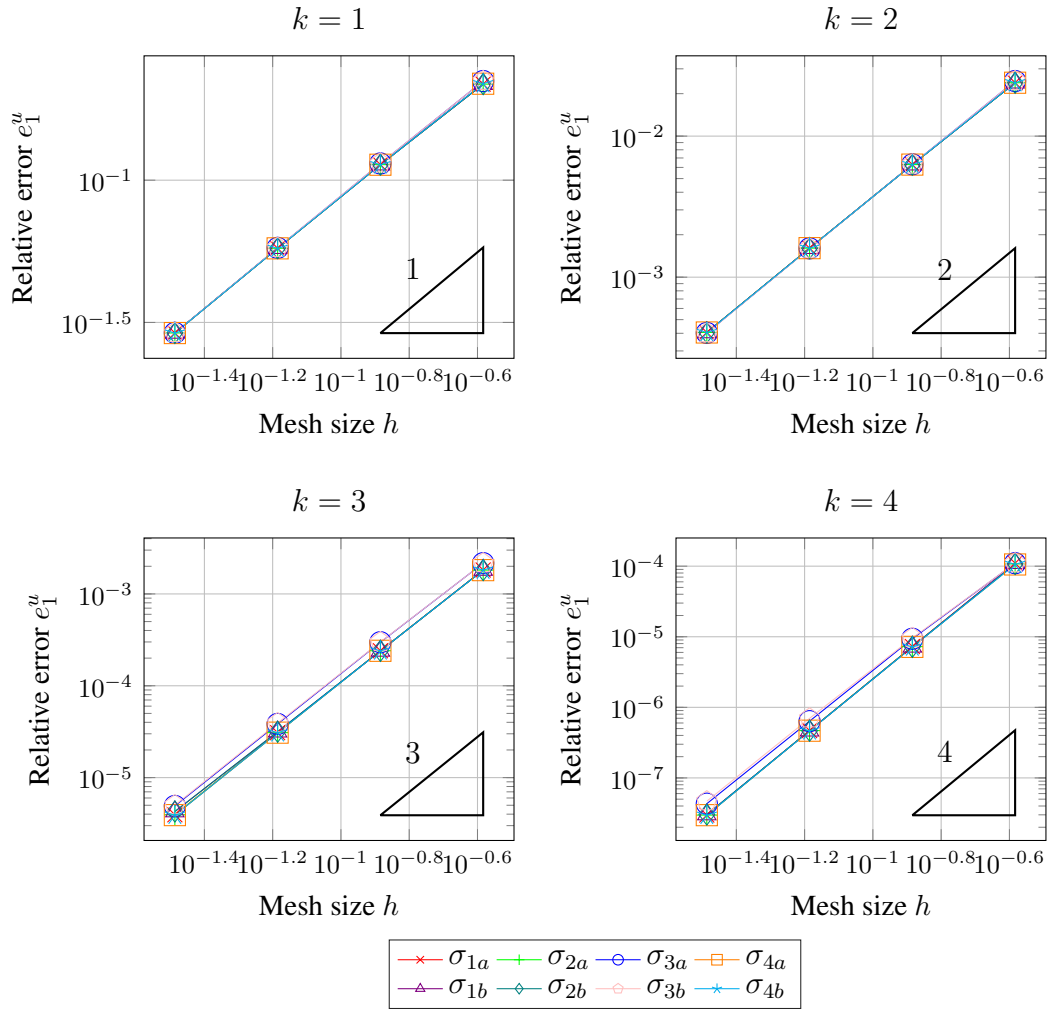


Figures~\ref{fig:n-side-L2} and \ref{fig:n-side-H1} show the
convergence plots of $\Error{0}$ and $\Error{1}$ that we obtain with
the VEM and the stabilizations \Stab{i}, $i=1,2,3,4$.

As expected, the errors $\Error{0}$ and $\Error{1}$ behave like
$\calO(h^{k+1})$ and $\calO(h^{k})$, respectively.
However, we note that there is some loss of accuracy for $k=1,3$ and
$4$ when we use the stabilizations \Stab{1} and \Stab{2}.
On the other hand, the VEM with stabilizations \Stab{3} and \Stab{4}
behaves similarly for $k=1,2$ and $3$ but for $k=4$, \Stab{3} induces
a visible loss in the convergence rate, possibly due to the effect of
round-off.
Instead, all versions of the VEM exhibit a similar behavior for
$\Error{1}$.

\subsubsection*{Test Case 3}

\begin{table} 
  \centering
  \begin{tabular}{
      c
      S[table-format=7.0]
      S[table-format=7.0]
      S[table-format=3.{\roundPrecision}e-1]
      S[table-format=3.{\roundPrecision}e-1]
      S[table-format=3.{\roundPrecision}e+1]
    }
    \toprule
        {Mesh} & {$\Nel$} & {$\Ned$} & {$\hh$} & {$\widehat{\hh}$} & {$\gamh$}\\
        \midrule
        1 &   64 &    576 & 1.767767e-01 & 3.125000e-02 & 5.656854e+00\\
        2 &  256 &   4352 & 8.838835e-02 & 7.812500e-03 & 1.131371e+01\\
        3 & 1024 &  33792 & 4.419417e-02 & 1.953125e-03 & 2.262742e+01\\
        4 & 4096 & 266240 & 2.209709e-02 & 4.882812e-04 & 4.525483e+01\\
        \bottomrule
  \end{tabular}
  \caption{Test Case~3: data of mesh family \MeshThree.}
  \label{tab:dyadic}
\end{table}

In the third test case, we consider the family of meshes shown in
Figure~\ref{fig:dyadic}.
At each mesh refinement step, the number of edges per element is an
increasing power of two, so that we can take the auxiliary grid
$\gridauxP$ as the grid whose elements are the edges in $\EdgesP$.
The data for these meshes are reported in Table~\ref{tab:dyadic}.



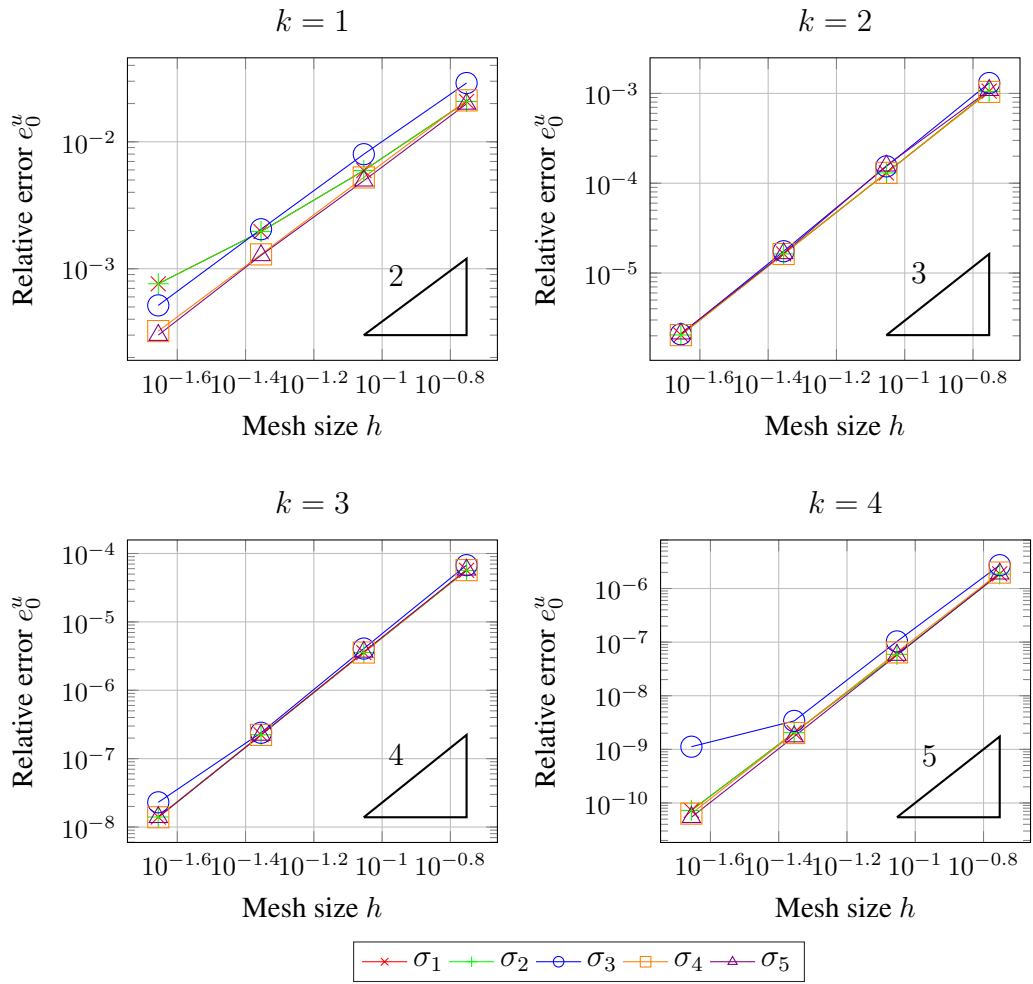
\begin{figure}\centering
  \begin{tabular}{rl}

    \begin{tikzpicture}[trim axis left]
      \begin{loglogaxis}
        [ mark size=4pt, grid=major, small,
          xlabel={Mesh size $\hh$},
          ylabel={Relative error $\Error{0}$},
          title ={$k=1$},
          legend columns=3, legend to name=d-square-L2-1 ]
        
        \addplot[color=red,mark=x] coordinates {
          (0.176777,0.020755)
          (0.0883884,0.0059456)
          (0.0441942,0.0019618)
          (0.0220971,0.000761046)
        };
        \addplot[color=green,mark=+] coordinates {
          (0.176777,0.020755)
          (0.0883884,0.0059456)
          (0.0441942,0.0019618)
          (0.0220971,0.000761046)
        };
        \addplot[color=blue,mark=o] coordinates {
          (0.176777,0.0290303)
          (0.0883884,0.00798594)
          (0.0441942,0.00204697)
          (0.0220971,0.000514381)
        };
        \addplot[color=orange,mark=square] coordinates {
          (0.176777,0.0212007)
          (0.0883884,0.00523864)
          (0.0441942,0.0012975)
          (0.0220971,0.000322657)
        };
        \addplot[color=violet,mark=triangle] coordinates {
          (0.176777,0.0198967)
          (0.0883884,0.00493532)
          (0.0441942,0.00127452)
          (0.0220971,0.000299988)
        };
        \addplot[thick,color=black,no markers] coordinates {
          (0.0883884, 0.000299988)
          (0.176777, 0.000299988)
          (0.176777, 0.00119995)
          (0.0883884, 0.000299988)
        };
        \node [anchor=south east] at (0.125,0.000599977) {$2$};
        
      \end{loglogaxis}
    \end{tikzpicture}
    
    &
    
    \begin{tikzpicture}[trim axis right]
      \begin{loglogaxis}
        [ mark size=4pt, grid=major, small,
          xlabel={Mesh size $\hh$},
          ylabel={Relative error $\Error{0}$},
          title ={$k=2$},
          legend columns=3, legend to name=d-square-L2-2 ]
        
        \addplot[color=red,mark=x] coordinates {
          (0.176777,0.00106009)
          (0.0883884,0.00013019)
          (0.0441942,1.6344e-05)
          (0.0220971,2.04373e-06)
        };
        \addlegendentry{$S_1$}
        \addplot[color=green,mark=+] coordinates {
          (0.176777,0.0010507)
          (0.0883884,0.000130232)
          (0.0441942,1.63457e-05)
          (0.0220971,2.04377e-06)
        };
        \addlegendentry{$S_2$}
        \addplot[color=blue,mark=o] coordinates {
          (0.176777,0.00130448)
          (0.0883884,0.000153993)
          (0.0441942,1.74873e-05)
          (0.0220971,2.08157e-06)
        };
        \addlegendentry{$S_3$}
        \addplot[color=orange,mark=square] coordinates {
          (0.176777,0.00103867)
          (0.0883884,0.00013033)
          (0.0441942,1.62915e-05)
          (0.0220971,2.03546e-06)
        };
        \addlegendentry{$S_4$}
        \addplot[color=violet,mark=triangle] coordinates {
          (0.176777,0.00108639)
          (0.0883884,0.000156552)
          (0.0441942,1.656e-05)
          (0.0220971,2.11339e-06)
        };
        \addlegendentry{$S_5$}
        \addplot[thick,color=black,no markers] coordinates {
          (0.0883884, 2.03546e-06)
          (0.176777, 2.03546e-06)
          (0.176777, 1.62837e-05)
          (0.0883884, 2.03546e-06)
        };
        \node [anchor=south east] at (0.125,5.75716e-06) {$3$};
        
      \end{loglogaxis}
    \end{tikzpicture}

    \\[0.75em]
    
    \begin{tikzpicture}[trim axis left]
      \begin{loglogaxis}
        [ mark size=4pt, grid=major, small,
          xlabel={Mesh size $\hh$},
          ylabel={Relative error $\Error{0}$},
          title ={$k=3$},
          legend columns=3, legend to name=d-square-L2-3 ]
        
        \addplot[color=red,mark=x] coordinates {
          (0.176777,5.6814e-05)
          (0.0883884,3.558e-06)
          (0.0441942,2.23824e-07)
          (0.0220971,1.40223e-08)
        };
        \addplot[color=green,mark=+] coordinates {
          (0.176777,5.65947e-05)
          (0.0883884,3.56013e-06)
          (0.0441942,2.23872e-07)
          (0.0220971,1.40228e-08)
        };
        \addplot[color=blue,mark=o] coordinates {
          (0.176777,6.74914e-05)
          (0.0883884,4.07281e-06)
          (0.0441942,2.39146e-07)
          (0.0220971,2.29847e-08)
        };
        \addplot[color=orange,mark=square] coordinates {
          (0.176777,5.71052e-05)
          (0.0883884,3.56717e-06)
          (0.0441942,2.22552e-07)
          (0.0220971,1.39006e-08)
        };
        \addplot[color=violet,mark=triangle] coordinates {
          (0.176777,5.78512e-05)
          (0.0883884,3.64551e-06)
          (0.0441942,2.22288e-07)
          (0.0220971,1.39073e-08)
        };
        \addplot[thick,color=black,no markers] coordinates {
          (0.0883884, 1.39006e-08)
          (0.176777, 1.39006e-08)
          (0.176777, 2.22411e-07)
          (0.0883884, 1.39006e-08)
        };
        \node [anchor=south east] at (0.125,5.56025e-08) {$4$};
        
      \end{loglogaxis}
    \end{tikzpicture}
    
    &
    
    \begin{tikzpicture}[trim axis right]
      \begin{loglogaxis}
        [ mark size=4pt, grid=major, small,
          xlabel={Mesh size $\hh$},
          ylabel={Relative error $\Error{0}$},
          title ={$k=4$},
          legend columns=3, legend to name=d-square-L2-4 ]
        
        \addplot[color=red,mark=x] coordinates {
          (0.176777,1.8787e-06)
          (0.0883884,5.86472e-08)
          (0.0441942,2.04316e-09)
          (0.0220971,7.25621e-11)
        };
        \addplot[color=green,mark=+] coordinates {
          (0.176777,1.87155e-06)
          (0.0883884,5.91131e-08)
          (0.0441942,2.05176e-09)
          (0.0220971,7.26496e-11)
        };
        \addplot[color=blue,mark=o] coordinates {
          (0.176777,2.72579e-06)
          (0.0883884,1.03088e-07)
          (0.0441942,3.3906e-09)
          (0.0220971,1.12113e-09)
        };
        \addplot[color=orange,mark=square] coordinates {
          (0.176777,1.98533e-06)
          (0.0883884,6.43383e-08)
          (0.0441942,2.01624e-09)
          (0.0220971,6.33575e-11)
        };
        \addplot[color=violet,mark=triangle] coordinates {
          (0.176777,1.89004e-06)
          (0.0883884,5.74795e-08)
          (0.0441942,1.73337e-09)
          (0.0220971,5.41401e-11)
        };
        \addplot[thick,color=black,no markers] coordinates {
          (0.0883884, 5.41401e-11)
          (0.176777, 5.41401e-11)
          (0.176777, 1.73249e-09)
          (0.0883884, 5.41401e-11)
        };
        \node [anchor=south east] at (0.125,3.06264e-10) {$5$};
        
      \end{loglogaxis}
    \end{tikzpicture}
  \end{tabular}

  \begin{tabular}{c}
    \begin{tikzpicture} 
      \begin{axis}[%
          hide axis,
          xmin=10,
          xmax=50,
          ymin=0,
          ymax=0.4,
          legend style={draw=white!15!black,legend cell align=left},
          legend columns=5
        ]
        
        \addlegendimage{color=red,mark=x}
        \addlegendentry{\Stab{1}}
        
        \addlegendimage{color=green,mark=+}
        \addlegendentry{\Stab{2}}
        
        \addlegendimage{color=blue,mark=o}
        \addlegendentry{\Stab{3}}
        
        \addlegendimage{color=orange,mark=square}
        \addlegendentry{\Stab{4}}
        
        \addlegendimage{color=violet,mark=triangle}
        \addlegendentry{\Stab{5}}        
      \end{axis}
    \end{tikzpicture}
  \end{tabular}

  \caption{Test Case~3: convergence plots for  $\Error{0}$
    using the ``squared'' polygonal meshes \MeshThree, and the five
    stabilization strategies \Stab{i}, $i=1,2,3,4,5$.
    Top row: plots for $k=1$ (left panel) and $k=2$ (bottom panel);
    bottom row: plots for $k=3$ (left panel) and $k=4$ (bottom panel).}
  \label{fig:d-square-L2}
\end{figure}


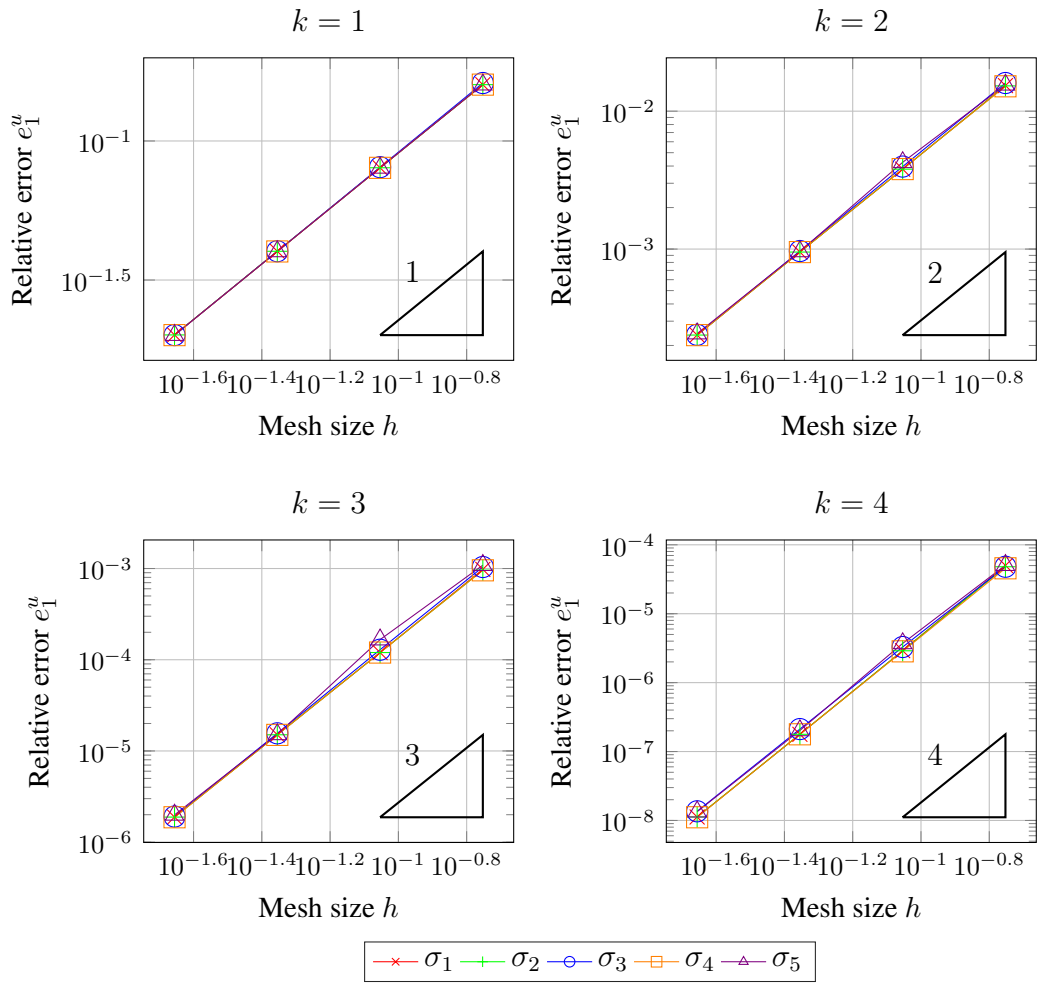
\begin{figure}\centering
  \begin{tabular}{rl}

    \begin{tikzpicture}[trim axis left]
      \begin{loglogaxis}
        [ mark size=4pt, grid=major, small,
          xlabel={Mesh size $\hh$},
          ylabel={Relative error $\Error{1}$},
          title ={$k=1$},
          legend columns=3, legend to name=d-square-H1-1 ]
        
        \addplot[color=red,mark=x] coordinates {
          (0.176777,0.159434)
          (0.0883884,0.0801138)
          (0.0441942,0.0401207)
          (0.0220971,0.0200715)
        };
        \addplot[color=green,mark=+] coordinates {
          (0.176777,0.159434)
          (0.0883884,0.0801138)
          (0.0441942,0.0401207)
          (0.0220971,0.0200715)
        };
        \addplot[color=blue,mark=o] coordinates {
          (0.176777,0.161582)
          (0.0883884,0.0805201)
          (0.0441942,0.0401342)
          (0.0220971,0.020047)
        };
        \addplot[color=orange,mark=square] coordinates {
          (0.176777,0.159465)
          (0.0883884,0.0800529)
          (0.0441942,0.0400663)
          (0.0220971,0.0200382)
        };
        \addplot[color=violet,mark=triangle] coordinates {
          (0.176777,0.159408)
          (0.0883884,0.0800454)
          (0.0441942,0.0400674)
          (0.0220971,0.0200381)
        };
        \addplot[thick,color=black,no markers] coordinates {
          (0.0883884, 0.0200381)
          (0.176777, 0.0200381)
          (0.176777, 0.0400762)
          (0.0883884, 0.0200381)
        };
        \node [anchor=south east] at (0.125,0.0283382) {$1$};    
      \end{loglogaxis}
    \end{tikzpicture}
    
    &
    
    \begin{tikzpicture}[trim axis right]
      \begin{loglogaxis}
        [ mark size=4pt, grid=major, small,
          xlabel={Mesh size $\hh$},
          ylabel={Relative error $\Error{1}$},
          title ={$k=2$},
          legend columns=3, legend to name=d-square-H1-2 ]
        
        \addplot[color=red,mark=x] coordinates {
          (0.176777,0.0153264)
          (0.0883884,0.00379668)
          (0.0441942,0.000951357)
          (0.0220971,0.000237947)
        };
        \addplot[color=green,mark=+] coordinates {
          (0.176777,0.0152559)
          (0.0883884,0.00379691)
          (0.0441942,0.000951383)
          (0.0220971,0.000237948)
        };
        \addplot[color=blue,mark=o] coordinates {
          (0.176777,0.0159959)
          (0.0883884,0.00396537)
          (0.0441942,0.000967786)
          (0.0220971,0.000238987)
        };
        \addplot[color=orange,mark=square] coordinates {
          (0.176777,0.0151478)
          (0.0883884,0.00380303)
          (0.0441942,0.000951061)
          (0.0220971,0.000237702)
        };
        \addplot[color=violet,mark=triangle] coordinates {
          (0.176777,0.0155332)
          (0.0883884,0.00425751)
          (0.0441942,0.00096129)
          (0.0220971,0.000243794)
        };
        \addplot[thick,color=black,no markers] coordinates {
          (0.0883884, 0.000237702)
          (0.176777, 0.000237702)
          (0.176777, 0.00095081)
          (0.0883884, 0.000237702)
        };
        \node [anchor=south east] at (0.125,0.000475405) {$2$};
        
      \end{loglogaxis}
    \end{tikzpicture}

    \\[0.75em]
    
    \begin{tikzpicture}[trim axis left]
      \begin{loglogaxis}
        [ mark size=4pt, grid=major, small,
          xlabel={Mesh size $\hh$},
          ylabel={Relative error $\Error{1}$},
          title ={$k=3$},
          legend columns=3, legend to name=d-square-H1-3 ]

        \addplot[color=red,mark=x] coordinates {
          (0.176777,0.000978814)
          (0.0883884,0.000119858)
          (0.0441942,1.50407e-05)
          (0.0220971,1.88403e-06)
        };
        \addplot[color=green,mark=+] coordinates {
          (0.176777,0.000959306)
          (0.0883884,0.000119883)
          (0.0441942,1.50426e-05)
          (0.0220971,1.88407e-06)
        };
        \addplot[color=blue,mark=o] coordinates {
          (0.176777,0.0010388)
          (0.0883884,0.000128809)
          (0.0441942,1.56057e-05)
          (0.0220971,1.91122e-06)
        };
        \addplot[color=orange,mark=square] coordinates {
          (0.176777,0.000955977)
          (0.0883884,0.000119933)
          (0.0441942,1.49997e-05)
          (0.0220971,1.87513e-06)
        };
        \addplot[color=violet,mark=triangle] coordinates {
          (0.176777,0.00108828)
          (0.0883884,0.000166146)
          (0.0441942,1.52021e-05)
          (0.0220971,1.9927e-06)
        };
        \addplot[thick,color=black,no markers] coordinates {
          (0.0883884, 1.87513e-06)
          (0.176777, 1.87513e-06)
          (0.176777, 1.50011e-05)
          (0.0883884, 1.87513e-06)
        };
        \node [anchor=south east] at (0.125,5.30368e-06) {$3$};
      \end{loglogaxis}
    \end{tikzpicture}
    
    &
    
    \begin{tikzpicture}[trim axis right]
      \begin{loglogaxis}
        [ mark size=4pt, grid=major, small,
          xlabel={Mesh size $\hh$},
          ylabel={Relative error $\Error{1}$},
          title ={$k=4$},
          legend columns=3, legend to name=d-square-H1-4 ]
        
        \addplot[color=red,mark=x] coordinates {
          (0.176777,5.01493e-05)
          (0.0883884,2.9098e-06)
          (0.0441942,1.77942e-07)
          (0.0220971,1.12321e-08)
        };
        \addplot[color=green,mark=+] coordinates {
          (0.176777,4.80403e-05)
          (0.0883884,2.89918e-06)
          (0.0441942,1.77935e-07)
          (0.0220971,1.12347e-08)
        };
        \addplot[color=blue,mark=o] coordinates {
          (0.176777,4.80583e-05)
          (0.0883884,3.28772e-06)
          (0.0441942,2.11908e-07)
          (0.0220971,1.35783e-08)
        };
        \addplot[color=orange,mark=square] coordinates {
          (0.176777,4.56278e-05)
          (0.0883884,2.84592e-06)
          (0.0441942,1.7792e-07)
          (0.0220971,1.11244e-08)
        };
        \addplot[color=violet,mark=triangle] coordinates {
          (0.176777,5.0622e-05)
          (0.0883884,3.71e-06)
          (0.0441942,2.02486e-07)
          (0.0220971,1.34022e-08)
        };
        \addplot[thick,color=black,no markers] coordinates {
          (0.0883884, 1.11244e-08)
          (0.176777, 1.11244e-08)
          (0.176777, 1.77991e-07)
          (0.0883884, 1.11244e-08)
        };
        \node [anchor=south east] at (0.125,4.44977e-08) {$4$};
      \end{loglogaxis}
    \end{tikzpicture}
    
  \end{tabular}

  \begin{tabular}{c}
    \begin{tikzpicture} 
      \begin{axis}[%
          hide axis,
          xmin=10,
          xmax=50,
          ymin=0,
          ymax=0.4,
          legend style={draw=white!15!black,legend cell align=left},
          legend columns=5
        ]
        
        \addlegendimage{color=red,mark=x}
        \addlegendentry{\Stab{1}}
        
        \addlegendimage{color=green,mark=+}
        \addlegendentry{\Stab{2}}
        
        \addlegendimage{color=blue,mark=o}
        \addlegendentry{\Stab{3}}
        
        \addlegendimage{color=orange,mark=square}
        \addlegendentry{\Stab{4}}
        
        \addlegendimage{color=violet,mark=triangle}
        \addlegendentry{\Stab{5}}        
      \end{axis}
    \end{tikzpicture}
  \end{tabular}

  \caption{Test Case~3: convergence plots for  $\Error{1}$
    using the ``squared'' polygonal meshes \MeshThree, and the five
    stabilization strategies \Stab{i}, $i=1,2,3,4,5$.
    Top row: plots for $k=1$ (left panel) and $k=2$ (bottom panel);
    bottom row: plots for $k=3$ (left panel) and $k=4$ (bottom
    panel).}
  \label{fig:d-square-H1}
\end{figure}

Figures~\ref{fig:d-square-L2} and \ref{fig:d-square-H1} show the
convergence plots for $\Error{0}$ and $\Error{1}$ that we obtain with
the stabilizations \Stab{i}, $i=1,2,3,4,5$.
We observe significative differences in the convergence rates for
$\Error{0}$.
In particular, for $k=1$, the stabilizations \Stab{1} and \Stab{2}
perform poorly.

For $k=4$, the performance of \Stab{3} is extremely poor on the finest
mesh.
On the other hand, all the convergence plots for $\Error{1}$ show the
optimal convergence rate proportional to $\calO(\hh^k)$ regardless of
the stabilization.

\medskip

The most robust stabilizations are $\sigma_4$ and, when this is
computed, $\sigma_5$ (for technical reasons, depending on the wavelet
implementation at our disposal, we only tested $\sigma_5$ on the
family \MeshThree).
However, the algorithm we used to compute the square-root of a matrix
in $\sigma_4$ (see~\cite{Higham:1987}) failed to run in some
experiments on hexagonal meshes with more extreme values of the ratio
$\hh\slash{\widehat{\hh}}$ than those of the \MeshTwo~family, due to
round-off errors.
More stable algorithms for computing the square-root of a matrix
should be considered (see, for example,
\cite{Deadman-Higham-Ralha:2013}).

\section{Conclusions}
\label{sec:conclusions}

We studied a novel approach to designing computable stabilizing
bilinear forms for the nonconforming virtual element method, based on
the duality technique first introduced in
\cite{Bertoluzza_algebraic_dual}. This consists in transfering the
definition of the bilinear form from the local virtual element space
to the dual space spanned by the functionals yielding the degrees of
freedom. In such a way we could overcome the difficulty posed by the
fact that, in the non conforming framework, the shape functions are
non computable (not even as far as their trace on the boundary of the
elements is concerned), and that the only information to which we have
access along the computation are the values of the degrees of freedom.
By applying this novel technique, we built new bilinear forms with
optimal or quasi-optimal stability bounds, under assumptions on the
mesh which are weaker than the ones usually made in the analysis of
the virtual element method, and which allow a mesh to have a very
large number of arbitrarily small edges per element.  The resulting
discretization of second-order elliptic problems is accurate and
robust, and allows for optimal or quasi-optimal error bounds. Finally,
we numerically investigated the behavior of a non conforming VEM,
implementing several examples of these new stabilization forms, and we
assessed its performance on a set of representative test cases. The
results of the numerical experiments confirmed the theoretical
expectations.

\section*{Acknowledgements}
This paper has been realised in the framework of ERC Project CHANGE,
which has received funding from the European Research Council (ERC)
under the European Union’s Horizon 202 (grant agreement no.~694515),
and of the project ``Virtual Element Methods: Analysis and
Applications", funded by the MIUR Progetti di Ricerca di Rilevante
Interesse Nazionale (PRIN) Bando 2017 (grant 201744KLJL).


\appendix

\section{Proof of Lemma \ref{lemma:enhanced}}\label{appendix:enhanced}

Let $\vsh \in \VhkP$ and $\vshh \in \VhkenP$ satisfy
\eqref{eq:cond:v:vhat}. We recall that such a condition implies that
$\PinP{k} \vsh = \PinP{k} \vshh$. As $\Delta \vsh \in \Poly_{k-2}(\P)$
and $(\nabla\vsh \cdot {\nor_\P})_{|e} \in \Poly_{k-1}(e)$ for all $e
\in \EdgesP$ we have
\begin{align*}
  \SNORM{\vsh}{1,\P}^2
  &=  \int_{\P }\ABS{\nabla\vsh}^2\dV
  =  -\int_{\P}\Delta\vsh\,\vsh\dV  + \int_{\bP}\nabla\vsh\cdot\norP\vsh\dV
  =  -\int_{\P}\Delta\vsh\,\vshh\dV + \int_{\bP}\nabla\vsh\cdot\norP\vshh\dV\\[0.5em]
  &=  \int_{\P}\nabla\vsh\cdot\nabla\vshh\dV
  \leq \SNORM{\vsh}{1,\P}\,\SNORM{\vshh}{1,\P}.
\end{align*}
We divide both sides by $\SNORM{\vsh}{1,\P}$ and obtain the upper
bound. 
On the other hand, we observe that, as $\Delta(\vshh - \PinP{k} \vshh) \in \Poly_k$, it can be split as
\[
\Delta(\vshh - \PinP{k} \vshh) = \Dlow(\vshh) + \Dhigh(\vshh)
\]
with $\Dlow(\vshh) \in \Poly_{k-2}(\P)$ and $\Dhigh(\vshh)$ belonging to the linear space spanned by $\monomials{k}{\P} \setminus \monomials{k-2}{\P}$, and that we have
\[
\| \Delta(\vshh - \PinP{k} \vshh) \|_{0,\P} \simeq \| \Dlow(\vshh) \|_{0,\P} + \|  \Dhigh(\vshh)\|_{0,\P}.
\]
We observe that, in view of the definition of the enhanced space, we have that
\[
\int_{\P}\Dhigh(\vshh) (\vshh - \PinP{k} \vshh) = 	\int_{\P}\Dhigh(\vshh) (\PinP{k} \vshh - \PinP{k} \vshh) = 0.
\]
Then	we can write:
\begin{multline*}
  \int_{\P} | \nabla(\vshh - \PinP{k} \vshh) |^2 = - \int_{\P} \Delta (\vshh - \PinP{k} \vshh)(\vshh - \PinP{k} \vshh) + \int_{\bP} \nabla(\vshh - \PinP{k} \vshh) \cdot \nor_\P (\vshh - \PinP{k} \vshh) = \\
  =- \int_{\P} \Dlow(\vshh) (\vshh - \PinP{k} \vshh)  + \int_{\bP} \nabla(\vshh - \PinP{k} \vshh) \cdot \nor_\P (\vsh - \PinP{k} \vsh) \\
  =- \int_{\P} \Dlow(\vshh) (\vsh - \PinP{k} \vsh) + 
  \int_{\P} \Delta(\vshh - \PinP{k} \vshh)(\vsh - \PinP{k} \vsh)\\ + \int_{\P}\nabla(\vshh - \PinP{k} \vshh) \cdot \nabla (\vsh - \PinP{k} \vsh).
\end{multline*}
Then we have
\[
\int_{\P} | \nabla(\vshh - \PinP{k} \vshh) |^2 \lesssim \| \Delta(\vshh - \PinP{k} \vshh) \|_{0,\P} \| \vsh - \PinP{k} \vsh \|_{0,\P} + |\vshh - \PinP{k} \vshh|_{1,\P} | \vsh - \PinP{k} \vsh |_{1,P}
\]
Using Lemma \ref{lem:inverse} and a Poincar\'e inequality finally yields
\[
\int_{\P} | \nabla(\vshh - \PinP{k} \vshh) |^2 \lesssim  |\vshh - \PinP{k} \vshh|_{1,\P} | \vsh - \PinP{k} \vsh |_{1,P}.
\]
Dividing both sides by $ |\vshh - \PinP{k} \vshh|_{1,\P} $ and using a triangular inequality yields the lower bound.



\section{Proof of Lemma \ref{lem:boundbelow}}\label{appendix:technical}
Let $\eta\in \Nk{k-1}(\bP)$ with $\int_{\bP}\eta\dS =0$.  Let
$\PS{k+1}^0(\E)=\PS{k+1}(\E)\cap\HONEzr(\E)$ and observe that
\begin{align}
  \inf_{\eta \in\PS{k-1}(\E)}\sup_{\qs\in\PS{k+1}^0(\E)}
  \frac{ \int_{\E}\eta\qs\dS }{\NORM{\eta}{0,\E}\,\NORM{\qs}{0,\E}}\gtrsim1.
  \label{eq:infsupe}
\end{align}
Relation~\eqref{eq:infsupe} can be proven on the reference interval
$\widehat{\E}=[0,1]$ by noting that the Riesz isomorphism between
$\HS{-1}(\E)$ and $\HONEzr(\E)$ maps $\PS{k-1}(\widehat{\E})$ to
$\PS{k+1}^0(\widehat{\E})$, and that all norms are the equivalent on
such finite dimensional spaces.
Then, we apply a scaling argument to obtain \eqref{eq:infsupe} for a
generic edge $\E$.
This implies that for every edge $\E \in \EdgesP$, a function
$\phi_{\E}(\eta)\in\PS{k+1}^0(\E)$ exists such that
\begin{align*}
  \NORM{\eta}{0,\E}^2  = \int_{\E}\eta\phi_\E(\eta)\dS,
  \qquad
  \NORM{\phi_{\E}(\eta)}{0,\E} \simeq \NORM{\eta}{0,\E}.
\end{align*}
We let $\phi\in\HS{\frac12}(\bP)$ denote the function satisfying
$\restrict{\phi}{\E}=\hE\phi_{\E}(\eta)$ for all $\E \in \EdgesP$, and
we write
\begin{align}\label{N1}
  \sum_\E\hE\NORM{\eta}{0,\E}^2
  \leq \sum_\E\hE\int_{\E}\eta\phi_{\E}(\eta)\dS
  =    \int_{\bP}\eta\phi\dS.
\end{align}
As $\int_{\bP}\eta\dS=0$, for
$\bar\phi=\fint_{\bP}\phi\dS$ we can write
\begin{align}\label{N2}
  \int_{\bP}\eta\phi\dS
  = \int_{\bP}\eta (\phi - \bar\phi)\dS
  \lesssim \SNORM{\eta}{-1/2,\bP}\,\SNORM{\phi - \bar\phi}{1/2,\bP}
  =        \SNORM{\eta}{-1/2,\bP}\,\SNORM{\phi}{1/2,\bP} .
\end{align}

It remains to bound $\snorm{\phi}{1/2,\bP}$.
First, we split $\phi$ as $\phi=\phi_1+\phi_2$ where $\phi_1$ is
supported on the edges of $\EdgesPp$ and $\phi_2$ on the edges of
$\EdgesPpp$.
Let $\widehat{\phi}_{\E}$ denote the pullback of $\phi_{\E}$ on the
reference edge $\widehat{\E}=[0,1]$.
We use again a scaling argument and the equivalence of all norms on
the finite dimensional space $\PS{k+1}^0(\widehat{\E})$ to find
that
\begin{align*}
  \SNORM{\phi_2}{1/2,\bP} &
  \lesssim \sum_{\E\in\EdgesPpp} \NORM{\hE\phi_\E(\eta)}{\HS{\frac12}_{00}(\E)}
  =        \sum_{\E\in\EdgesPpp} \hE\NORM{\widehat{\phi}_\E}{\HS{\frac12}_{00}(\widehat{\E})}
  \lesssim \sum_{\E\in\EdgesPpp} \hE\NORM{\widehat{\phi}_\E}{0,\widehat{\E}}\\
  &=       \sum_{\E\in\EdgesPpp} \hE^{1/2}\NORM{\phi_\E(\eta)}{0,\E}
  \lesssim \sqrt{N^*} \left( \sum_{\E\in\EdgesPpp} \hE\NORM{\phi_{\E}(\eta)}{0,\E}^2\right)^{1/2} 
  \leq     \sqrt{N^*} \left( \sum_{\E\in\EdgesPpp} \hE\NORM{\eta}{0,\E}^2\right )^{1/2},
\end{align*}
where we recall that, for an edge $\E \in \EdgesP$, the space
$H^{\frac12}_{00}(e)$ is the space of functions $\eta$ in
$H^{\frac12}(\E)$ such that the function $\Ext \eta \in L^2(\bP)$
satisfying $\Ext \eta_{|e} = \eta$ and $\Ext \eta_{|\bP\setminus e} =
0$ is in $H^{\frac12}(\bP)$, endowed with the norm $\| \eta
\|_{H^{\frac12}_{00}(e)} = | \Ext \eta |_{1/2,\bP}$.

To bound $\SNORM{\phi_1}{1/2,\bP}$, we proceed as in
\cite{Faermann:2000}, taking advantage that the grid is locally quasi
uniform on the support of $\phi_1$.
Let $\tEe$ denote the patch given by the union of $\E\in\EdgesP$ and
its two neighboring edges.
Then, we have that
\begin{align}\label{eq:app:1}
  \SNORM{\phi_1}{1/2,\bP}^2
  = \sum_{\E\in\EdgesP}\int_{\E}
  \left[
    \int_{\tEe}\frac{\ABS{\phi_1(x) - \phi_1(y)}^2}{\ABS{x-y}^2}\,dx\,dy
    + \int_{\bP\setminus\tEe}\frac{\ABS{\phi_1(x)-\phi_1(y)}^2}{\ABS{x-y}^2}\,dx\,dy
    \right].
\end{align}
With our definition of $\EdgesPp$ and $\phi_1$, we see that
\begin{align*}
  \sum_{\E\in\EdgesP} \int_{\E}\int_{\tEe} \frac{\ABS{\phi_1(x) - \phi_1(y)}^2}{\ABS{x-y}^2}\,dx\,dy
  \lesssim \sum_{\E\in\EdgesPp} \int_{\tEe}\int_{\tEe} \frac{\ABS{\phi_1(x) - \phi_1(y)}^2}{\ABS{x-y}^2}\,dx\,dy
  = \sum_{\E\in\EdgesPp} \SNORM{\phi_1}{1/2,\tEe}^2.
\end{align*}
Assumption \TERM{G3.1} allows us to use an
inverse inequality on $\tEe$, which yields
\begin{align*}
  \sum_{\E\in\EdgesPp} \SNORM{\phi_1}{1/2,\tEe}^2
  \lesssim \sum_{\E\in\EdgesPp}  \sum_{\Ep\subset\tEe}\hh_{\Ep}^{-1}\NORM{\phi_1}{0,\Ep}^2
  =     \sum_{\E\in\EdgesPp}     \hh_{\E }  \NORM{\eta}{0,\E}^2.
\end{align*}
On the other hand, we can write
\begin{align*}
  &\sum_{\E\in\EdgesP} \int_{\E}\int_{\bP\setminus\tEe}\frac{\ABS{\phi_1(x) - \phi_1(y)}^2}{\ABS{x-y}^2}\,dx\,dy\\
  &\qquad\qquad\lesssim
  \sum_{\E\in\EdgesP} \int_{\E}\int_{\bP\setminus\tEe}\frac{\ABS{\phi_1(x)}^2}{\ABS{x-y}^2}\,dx\,dy +
  \sum_{\E\in\EdgesP} \int_{\E}\int_{\bP\setminus\tEe}\frac{\ABS{\phi_1(y)}^2}{\ABS{x-y}^2}\,dx\,dy \\
  &\qquad\qquad= 2 \sum_{\E\in\EdgesPp} \int_{\E} \ABS{\phi_1(x)}^2\left(\int_{\bP\setminus\tEe}\frac{1}{\ABS{x-y}^2}\,dy\right)\,dx, 
\end{align*}
where the second term of the sum in the second step can be seen to be
equal to first one by splitting the integral in $y$ over the union of
edges in $\EdgesPp\setminus\tEe$ and switching the two integrals.
By direct calculation, under our assumptions, we find the bound
\begin{align*}
  \int_{\bP\setminus\tEe}\frac{1}{\ABS{x-y}^2}\,dy
  \lesssim \hE^{-1},
\end{align*}
finally yielding
\begin{align*}
  \sum_{\E\in\EdgesP} \int_{\E}\int_{\bP\setminus\tEe}\frac{\ABS{\phi_1(x) - \phi_1(y)}^2}{\ABS{x-y}^2}\,dx\,dy
  \lesssim \sum_{\E\in\EdgesPp}\hE^{-1}\NORM{\phi_1}{0,\E}^2
  \lesssim \sum_{\E\in\EdgesPp}\hE    \NORM{\eta}{0,\E}^2. 
\end{align*}
Collecting the contributions of $\phi_1$ and $\phi_2$ we finally have that
\[
| \phi |^2_{1/2,\bP} \lesssim \sum_{e \in \EdgesP} \hh_{\E} \| \eta \|^2_{0,\E}.
\]
Substituting such a bound in \eqref{N2} and using the result in \eqref{N1} we then write
\begin{align*}
  \sum_{\E} \hE\NORM{\eta}{0,\E}^2 = \int_{\bP} \eta \phi 
  \lesssim \SNORM{\eta}{-1/2,\bP}\left( \sum_{\E} \hE\norm{\eta}{0,\E}^2 \right)^{1/2},
\end{align*}
and dividing both sides by the square root of
$\sum_{\E}\hE\NORM{\eta}{0,\E}^2$ yields
\begin{align*}
  \SNORM{\eta}{-1/2,\bP} \gtrsim
  \left( \sum_{\E}\hE\NORM{\eta}{0,\E}^2 \right)^{1/2},
\end{align*}
which concludes the proof.

\section{The Steinbach projector}
\label{appendix:Steinbach}
\begin{figure}
  \centering
  \includegraphics[height=5cm]{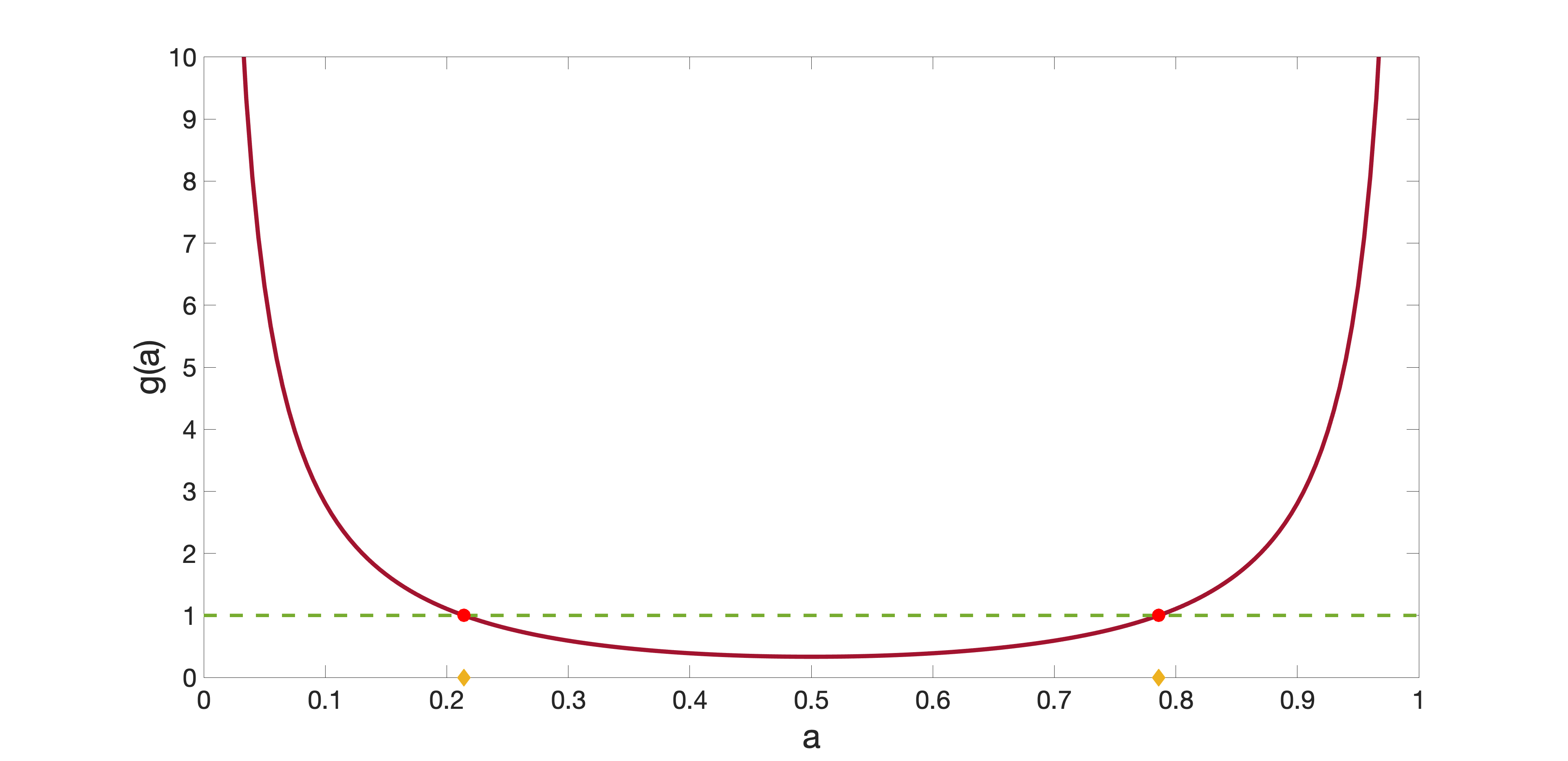} 
  \caption{The function $g(a)$ for which $\lambda(a) = 1\pm g(a)$}
  \label{fig:ga}
\end{figure}
In this section we review a result by
O.~Steinbach~\cite{Steinbach:2002} on the boundedness in $\HS{s}$ of the projector onto the
space of continuous piecewise linears, orthogonally to the space of
piecewise constants on the dual grid, which we adapt to the case at
hand by switching the roles of the two grids.
By a scaling argument it is sufficient to consider the case
$\mbP=1$.
Let $\G = \big\{\E_k,\,k=1,\cdots,M\}$ denote a decomposition of $\bP$
and let $\widehat{\hh}_k =\ABS{\E_k}$.
Let $x_k$ denote the midpoint of the interval $\E_k$ and
$\hG=\big\{\tau_\ell,\,\ell=1,\cdots,\Ms\big\}$, with
$\tau_\ell=\big[\xs_\ell,\xs_{\ell+1}\big]$,
the dual grid, with the ``cyclic'' convention that $\xs_{M+1}=\xs_1$,
$\E_{M+1}=\E_1$. We let $h_\ell = | \tau_\ell |$ denote the length of $\tau_\ell$. 

We make the assumption that $\G$ is locally quasi uniform, that is,
that there exists $\cs'\geq 1$ such that for all $k$ it holds that
\begin{align}
  \frac{1}{\cs'}\leq\frac{\widehat{\hh}_k}{\widehat{\hh}_{k+1}}\leq \cs'.
  \label{eq:graded}
\end{align}
We let $\Lin=\spa\{\phi_k\}_{k=1}^M \subset\HONE(\bP)$ and
$\Const=\spa\{\psi_k\}_{k=1}^M\subset\LTWO(\bP)$ denote, respectively,
the space of continuous piecewise linears on the grid $\hG$ and
the space of piecewise constants on the grid $\G$.
Here, $\phi_k$ is the nodal basis function corresponding to $\xs_k$ and
$\psi_k$ is the characteristic function of the interval $\E_k$.
Observe that the dual grid $\hG$ is itself locally quasi
uniform.
Moreover, the local mesh sizes are comparable, that is, there exist a
positive constant $c$ such that, for $\ell,k$ with
$\tau_\ell\cap\E_k\neq\emptyset$
\begin{align}
  \frac{1}{c}\leq\frac{\widehat{\hh}_k}{\hh_{\ell}} \leq c.
  \label{eq:graded2}
\end{align}

\medskip
We let $\Qtilde:\LTWO(\bP)\to\Lin$ denote the projection oprator defined as the solution to the variational
problem
\begin{align*}
  \int_{\bP} \Qtilde\us\ws \dS = \int_{\bP}\us\ws \dS,\qquad\forall\ws \in\Const.
\end{align*}

With the same proof as in \cite{Steinbach:2002}, we find that the operator
$\Qtilde$ is well defined and bounded in $\LTWO(\bP)$ with a constant
that does not depend on the size and number of the elements but only
on the constant $c'$ in \eqref{eq:graded}.
Let the local Gramian matrix be defined by
\begin{align*}
  \Gl[i,j] = \int_{\tau_\ell}\psi_{\ell+i-1}\phi_{\ell+j-1}\dS, \qquad 1\leq i,j\leq 2.
\end{align*}
Moreover, let $\matD_{\ell}$ and $\matH_{\ell}$ be the diagonal matrices defined by
\begin{align*}
  \restrict{(\matD_{\ell})}{i,i} = \restrict{(\Gl)}{i,i},
  \qquad
  \restrict{(\matH_\ell)}{i,i} = \widehat{\hh}^s_{\ell+i-1},\qquad 1 \leq i \leq 2.
\end{align*}
Then, we find that the results  stated in the following theorem holds.
The proof is the same of the analogous results in~\cite{Steinbach:2002}, though in such a paper the roles of the two grids are switched (the space of piecewise constants is definced on $\hG$ and the space of continuous piecewise linears on $\G$),
and  it hence  is omitted.
\begin{theorem}
  Assume that there exists a positive constant $\alpha_0$ such that
  \begin{equation}
    \xv^T \matH_{\ell}\Gl\matH_{\ell}^{-1}\xv \geq \alpha_0 \xv^T\matD_{\ell}\xv
    \label{eq:condsteinbach}
  \end{equation}
  for all $\xv\in\REAL^2$ and any integer $\ell=1,\cdots,\Ms$.
  Then,  $\Qtilde$ is bounded in $\HS{s}(\bP)$ by a constant not
  depending on the size and number of the elements.
\end{theorem}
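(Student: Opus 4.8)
The plan is to follow the argument of Steinbach~\cite{Steinbach:2002}, adapted to the present configuration in which the piecewise constants live on $\G$ and the continuous piecewise linears on the dual grid $\hG$. Since the $\LTWO(\bP)$-boundedness of $\Qtilde$ has already been recorded (with a constant depending only on $\cs'$), and since by the scaling reduction it suffices to treat $\mbP=1$, the task reduces to establishing the seminorm estimate $\SNORM{\Qtilde\us}{s,\bP}\lesssim\NORM{\us}{s,\bP}$ for $s\in(0,1)$. I would split the proof into two essentially independent ingredients: a mesh-dependent norm equivalence for functions of $\Lin$, and a purely algebraic coercivity estimate in which the spectral hypothesis \eqref{eq:condsteinbach} is the only place the interplay of the two grids enters.

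For the first ingredient, I would prove, for a continuous piecewise linear $\vs=\sum_k\vs_k\phi_k\in\Lin$, the equivalence
\begin{align*}
  \SNORM{\vs}{s,\bP}^2 \simeq \sum_{\ell}\hh_\ell^{\,1-2s}\,\big(\vs_{\ell+1}-\vs_\ell\big)^2,
\end{align*}
i.e. the genuine Aronszajn--Slobodeckij seminorm is equivalent to a weighted sum of squared nodal increments, the weights carrying exactly the $\hh_\ell$-powers that are encoded in $\matH_\ell$. This is obtained by localizing the double integral defining $\SNORM{\cdot}{s,\bP}$ over patches of neighboring dual elements, in the spirit of the Faermann localization already exploited in Appendix~\ref{appendix:technical}, using the local quasi-uniformity \eqref{eq:graded}--\eqref{eq:graded2} to dominate the cross-patch contributions, and finally scaling each local seminorm to the reference interval, where all norms on the finite-dimensional space of linears are equivalent. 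An analogous but simpler weighted characterization would be set up on the data side for the quantities $\int_{\E_j}\us$, again by localization, so that $\SNORM{\us}{s,\bP}$ controls the $\hh^{-s}$-weighted increments of the right-hand side.

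For the second ingredient, I would use the defining relation $\int_{\E_j}\Qtilde\us=\int_{\E_j}\us$ for every $j$, which shows that the nodal vector of $\Qtilde\us$ solves a banded linear system whose blocks assemble from the local Gramians $\Gl$. The role of hypothesis \eqref{eq:condsteinbach} is precisely to guarantee that the $s$-weighted local Gramian $\matH_\ell\Gl\matH_\ell^{-1}$ is uniformly coercive relative to its diagonal part $\matD_\ell$, which is what bounds, uniformly in the mesh, the amplification of the $\hh_\ell^{-s}$-weighted data under the global inverse; combining this with the two norm equivalences then transfers the weighted estimate from $\us$ to $\Qtilde\us$ and closes the bound. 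I expect the real difficulty to lie not in the $2\times2$ algebra but in reconciling the \emph{nonlocal} fractional seminorm with the \emph{local} spectral condition: the localization must be sharp enough that the per-element inequality \eqref{eq:condsteinbach} genuinely suffices, and the global, nonlocal nature of the solve defining $\Qtilde$ must be reduced to the local blocks $\Gl$ by a Fortin-type argument. This is exactly the technical core that is transplanted, with the two grids interchanged, from~\cite{Steinbach:2002}.
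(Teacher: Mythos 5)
Your overall plan --- reduce matters to a weighted, mesh-dependent estimate in which the local spectral condition \eqref{eq:condsteinbach} can act blockwise --- points in the right general direction, and the paper indeed simply defers to the argument of \cite{Steinbach:2002} with the roles of $\G$ and $\hG$ interchanged. However, your first ingredient contains a genuine error on which the rest of the argument leans. The claimed equivalence
\begin{align*}
\SNORM{\vs}{s,\bP}^2 \simeq \sum_{\ell}\hh_\ell^{\,1-2s}\big(\vs_{\ell+1}-\vs_\ell\big)^2
\end{align*}
is false in the direction you need. Take $s=1/2$, $\mbP=1$, a uniform mesh with $M$ elements, and let $\vs$ be the piecewise linear interpolant of a fixed smooth function such as $\sin(2\pi t)$ in arclength: the right-hand side is $O(1/M)\to 0$, while $\SNORM{\vs}{1/2,\bP}^2$ converges to the (nonzero) seminorm of the smooth limit. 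Hence $\SNORM{\vs}{s,\bP}^2\lesssim\sum_\ell \hh_\ell^{1-2s}(\vs_{\ell+1}-\vs_\ell)^2$ cannot hold with a mesh-independent constant --- and this is precisely the inequality you would have to apply to $\Qtilde\us$ to convert control of its nodal increments into control of $\SNORM{\Qtilde\us}{s,\bP}$. The fractional seminorm of a piecewise linear function is genuinely nonlocal and is not captured by nearest-neighbour differences; only the reverse inequality (weighted increments bounded by the seminorm) survives localization.

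This is exactly why Steinbach's proof does not proceed through a discrete characterization of $\SNORM{\cdot}{s,\bP}$. The actual route is: (i) use \eqref{eq:condsteinbach} blockwise to prove stability of $\Qtilde$ in the weighted norms $\big(\sum_\ell \hh_\ell^{-2s}\NORM{\spcdot}{0,\tau_\ell}^2\big)^{1/2}$, via a local duality argument in which the test function in $\Const$ is built element by element from the scaled nodal values of $\Qtilde\us$ --- this is where the $2\times 2$ algebra comparing $\matH_{\ell}\Gl\matH_{\ell}^{-1}$ with $\matD_{\ell}$ actually enters, and it is the only place the two grids interact; (ii) introduce an auxiliary $\HS{s}$-stable quasi-interpolant $P_h$ with the approximation property $\sum_\ell\hh_\ell^{-2s}\NORM{\us-P_h\us}{0,\tau_\ell}^2\lesssim\NORM{\us}{s,\bP}^2$; (iii) write $\Qtilde\us=P_h\us+\Qtilde(\us-P_h\us)$ and apply an inverse inequality to the discrete difference to pass from the weighted $L^2$ norm back to $\HS{s}$. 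Your appeal to the ``global inverse of the banded system'' would likewise need to be replaced by this local test-function construction: the per-element condition does not by itself control the global solve in the weighted norms. As written, the proposal does not yield the theorem.
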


\medskip
It is now possible to give an explicit sufficient condition on the constant $c'$ in \eqref{eq:graded}, in
order for \eqref{eq:condsteinbach} to hold for some positive constant
$\alpha_0$.
Indeed, let us focus on an element $\tau_\ell$ whose vertices are the
mid points of two adjacent elements $\E_1$ and $\E_2$ of length
$\widehat{\hh}_1$ and $\widehat{\hh}_2$.
We have $\hh_\ell = (\widehat{\hh}_1 + \widehat{\hh}_2)/2$.
Let us rescale everything in such a way that
\begin{align*}
  \hh_\ell = 1, \qquad \widehat{\hh}_1 = 2a,
  \qquad \widehat{\hh}_2 = 2(1-a),
  \qquad\text{with }a\in (0,1).
\end{align*}
A direct computation yields
\begin{align*}
  \Gl = \frac 1 2 \left(
  \begin{array}{cc}
    (2-a)a & a^2 \\
    (1-a)^2 & 1-a^2
  \end{array}
  \right).
\end{align*}

We can rewrite condition \eqref{eq:condsteinbach} in simmetric form as
\begin{align}
  \yv^T\matM_{\ell}\yv
  \geq \alpha_0 \yv^T\yv,
  \qquad\matM_{\ell}
  = \frac{1}{2}\matD_{\ell}^{-1/2}\Big(
  \matH_{\ell}\Gl\matH_{\ell}^{-1} + \matH^{-1}_\ell(\Gl)^T\matH_\ell\Big)\matD_\ell^{-1/2}.
  \label{eq:condsteinbacsym}
\end{align}

A constant $\alpha_0 > 0$ exists such that \eqref{eq:condsteinbach} holds for
all $\yv\in\REAL^2$ if and only if $\matM_\ell$ is a positive definite
matrix, and $\alpha_0$ is then its lowest eigenvalue.
Considering the case  $s=1/2$, a direct computation yields the following eigenvalues
for $\matM_{\ell}$
\begin{align}\label{eq:eigespression}
  \lambda
  = 1 \pm\frac{1}{2}\frac{\sqrt{a + 1}(3 a^2 - 3 a + 1)}{\sqrt{2 - a}(a - a^3)}
  = 1 \pm g(a)
\end{align}
where $g(a)$ is non negative for $a\in(0,1)$, see Figure \ref{fig:ga}.

The lowest eigenvalue stems then from the minus sign in \eqref{eq:eigespression}, and it is
positive if $g(a)<1$.
We solve such an inequality numerically and obtain that $a$ must
satisfy $a_0 < a < 1 - a_0$ with $a_0 \sim 0.214009576006805$ for
condition \eqref{eq:condsteinbach} to be true.
Now, we translate such condition on $a$ on a condition on the constant
$c'$ appearing in equation \eqref{eq:graded}.
More precisely, condition~\eqref{eq:condsteinbach} is satisfied if the
inequalities in~\eqref{eq:graded} hold with
$c'<(1-a_0)/a_0 \sim 3.672688104237926$.
The optimal value for $\alpha_0$ is attained when $a=1/2$, which
corresponds to $c'=1$, e.g., the uniform grid case.
In such a case, the smallest eigenvalue of $M_\ell$ is $\lambda=2/3$.

\end{document}